\documentclass{amsart}

\usepackage{amsthm}
\usepackage{graphicx}
\usepackage{enumerate}
\usepackage{amsmath,amssymb,amscd,amsfonts}
\usepackage{mathtools}
\usepackage{color}
\usepackage{lineno}

\allowdisplaybreaks[4]

\newtheorem{theorem}{Theorem}[section]
\newtheorem{lemma}[theorem]{Lemma}
\newtheorem{proposition}[theorem]{Proposition}

\theoremstyle{definition}

\theoremstyle{remark}
\newtheorem{remark}[theorem]{Remark}

\numberwithin{equation}{section}

\newcommand{\N}{\mathbb{N}}

\newcommand{\R}{\mathbb{R}}
\newcommand{\C}{\mathbb{C}}

\newcommand{\cB}{\mathcal{B}}
\newcommand{\cC}{\mathcal{C}}

\newcommand{\cF}{\mathcal{F}}

\newcommand{\cL}{\mathcal{L}}
\newcommand{\cN}{\mathcal{N}}
\newcommand{\cP}{\mathcal{P}}
\newcommand{\cS}{\mathcal{S}}
\newcommand{\cU}{\mathcal{U}}

\newcommand{\al}{\alpha}
\newcommand{\be}{\beta}
\newcommand{\la}{\lambda}
\newcommand{\La}{\Lambda}

\newcommand{\var}{\operatorname{var}}

\newcommand{\ua}{\underline{a}}
\newcommand{\Ua}{(a_i)_{i=1}^\infty}
\newcommand{\ub}{\underline{b}}

\newcommand{\ux}{\underline{x}}
\newcommand{\uy}{\underline{y}}

\newcommand{\uw}{\underline{w}}
\newcommand{\ud}{\underline{1}}
\newcommand{\um}{\underline{0}}

\begin{document}

\title[Dimension spectrum for beta-maps]{Dimension spectrum of digit frequency sets for beta-expansions}


\author{Shintaro Suzuki}
\address{Department of Mathematics, 
Tokyo Gakugei University, 
4-1-1 Nukuikitamachi 
Koganei-shi, 
Tokyo 184-8501,
Japan}
\email{shin05@u-gakugei.ac.jp}

\subjclass[2020]{37E05, 37A44, 37A50, 37D20.}
\thanks{{\it Keywords}: Beta-expansions; Hausdorff dimension; Thermodynamic formalism.}

\begin{abstract}
For any beta-shift $(X_\be,\sigma)$ on two symbols, i.e., the symbolic coding of the beta-map for $1<\be\leq2$, we give an exact formula for the Hausdorff dimension $\dim_{H} \La_{\al(t)}$ as a function of $t\in\R$, 
where $\La_\al$ denotes the frequency set of the digit $1$ defined by 
\[\La_\al=\Biggl\{(x_i)_{i=1}^\infty\in X_\be;\  \lim_{n\to\infty}\frac{1}{n}\sum_{i=1}^{n}x_i=\al \Biggr\}\]
for $\al\in[0,1]$ and $\al(t)$ is an explicit function related to the quasi-greedy expansion of $1$. The formula is derived from explicit formulae for eigenfunctions and eigenfunctionals corresponding to the leading eigenvalue $\la_t$ of the transfer operator $\cL_t$ with the potential $t\chi_{C_1}$ for $t\in\R$, where $\chi_{C_{1}}$ denotes the indicator function of the cylinder set $C_1=\{(x_i)_{i=1}^\infty\in X_\be; x_1=1\}$. These formulae can be applied not only to the leading eigenvalue but also to the other isolated eigenvalues of $\cL_t$, which yields a precise spectral decomposition of $\cL_t$. 
As a further application, we investigate the distribution function of the push-forward of the eigenmeasure  corresponding to $\la_t$ by the inverse map of the coding map. We show that the distribution function after a  change of variables for $t$ is equal to the Lebesgue singular function if $\be=2$ and satisfies an analogy of the Hata-Yamaguti formula, which yields a generalization of the Takagi function for beta-expansions with the base $1<\be<2$.

\end{abstract}
\maketitle
\begin{section}{Introduction}

The dynamical systems associated to expansions of real numbers have been investigated intensively after \cite{Re} at the intersection of ergodic theory and number theory. One of such dynamical systems is the beta-map $\tau_\be(x)=\be x$ mod $1$ for $x\in[0,1]$, $\be>1$, which equals to the dyadic map if $\be=2$. The beta-map is a simple example of a so-called chaotic map and has a unique absolutely continuous invariant probability measure $m_\be$, which is equivalent to the Lebesgue measure \cite{Re}. The ergodic properties of $m_\be$ are well-understood: its ergodicity \cite{Re}, an explicit formula for its density function \cite{Ge, Pa}, its exactness and the Bernoulli property of the natural extension of $\tau_\be$ \cite{Bo,Ro,Sm}. 
From these ergodic properties, we can investigate the statistical properties of the (greedy) beta-expansion: 
\[x=\sum_{n=1}^\infty\frac{g_n(x)}{\be^n}\]
for $x\in[0,1]$, where $g_n(x)$ denotes the integer part of $\be\tau_\be^{n-1}(x)$ for $x\in[0,1]$, $n\geq1$. 

One of our main concerns in this paper is the Hausdorff dimension of the frequency set   $\hat\La_\al$ of the digit $1$ appearing in beta-expansions, where the set $\hat\La_\al$ is defined by 
\[\hat\La_\al=\Biggl\{x\in[0,1]; \lim_{n\to\infty}\frac{1}{n}\sum_{i=1}^ng_i(x)=\al\Biggr\}\]
for $\al\in[0,1]$. Since the invariant measure $m_\be$ is ergodic with respect to $\tau_\be$, 
we have that the frequency set satisfies the $0$-$1$ law in the sense of the Lebesgue measure. In fact, 
the Birkhoff ergodic theorem yields that the relative frequency $(1/n)\sum_{i=1}^ng_i(x)$ converges to $\int_0^1g_1\ dm_\be$ as $n\to\infty$ for Lebesgue almost every point $x\in[0,1]$, together with that $m_\be$ is equivalent to the Lebesgue measure. Then it has full measure in $[0,1]$ for $\alpha=\int_0^1 g_1\ d m_\be$ but has zero measure for $\alpha\neq\int_0^1 g_1\ dm_\be$, which naturally poses the question of the value of its Hausdorff dimension. 
In the case of $\be=2$, it is well-known that the Hausdorff dimension $\dim_H \hat\Lambda_\al$ is given by 
\[\dim_H \hat\Lambda_\al=\frac{-\al\log\al-(1-\al)\log(1-\al)}{\log 2}\]
for $\al\in[0,1]$, which was shown by Besicovitch \cite{Be}.  For the golden ratio $\be=(1+\sqrt{5})/2$, Fan and Zhu \cite{Fa-Zh} gave an explicit formula:
\[\dim_{H}(\hat\Lambda_\al)=\frac{1}{\log\be}\Biggl((1-\al)\log\frac{1-\al}{1-2\al}-\al\log\frac{\al}{1-2\al}\Biggr)\]
for $\alpha\in[0,1/2)$ and $\dim_H\La_\al=0$ for $\al\in[1/2,1]$. 
Recently, Li \cite{Li} refined a conditional variational principle for the Hausdorff dimension (see also \cite{Ch-Zh-Zh, Th}) in the case where $\be\in(1,2)$ is a simple Parry number, i.e., the orbit of $1$ by $\tau_\be$ eventually falls in $\{0\}$ and gave its algebraic formula in the case where $\be$ is the tribonacci number, i.e., $\be>1$ satisfies $\be^3-\be^2-\be-1=0$ (see \cite[Remark 1]{Li}). From these results, it is natural to ask whether an explicit formula for $\dim_H\hat \La_\al$ can be given in general cases $1<\be<2$, although it would be more involved as a function of $\al\in[0,1]$, which is indicated for the case of the tribonacci number in \cite[Remark 1]{Li}.

In this paper, we give the Hausdorff dimension of the frequency set as a function of new parameter $t\in\R$ corresponding to the inverse temperature in the context of thermodynamic formalism for symbolic dynamics.
Furthermore, we show some related results brought by transfer operator method, including precise formulae for eigenfunctions and eigenfunctionals corresponding to an isolated eigenvalue of the transfer operator with a simple locally constant function. Throughout the paper, 
we consider the beta-shift, i.e., the coding system $(X_\be,\sigma)$ of the map $\tau_\be$ for $1<\be\leq2$, which allows us to investigate the beta-map as a continuous map on a compact metric space of two symbols without changing the essential properties of the beta-map. For the beta-shift the frequency set is defined by 
\[\La_\al=\Biggl\{(x_i)_{i=1}^\infty\in X_\be\subset\{0,1\}^\N;\  \lim_{n\to\infty}\frac{1}{n}\sum_{i=1}^{n}x_i=\al \Biggr\}\]
for $\al\in[0,1]$, whose Hausdorff dimension is equal to that of $\hat\La_\al$ for $\al\in[0,1]$ (see \cite[Theorem 1]{Li}). We give an exact formula for $\dim_H\La_{\al(t)}$ in Theorem \ref{Main D}, where $t\in\R$ and $\al(t)$ is a certain function defined using the coefficient sequence of the quasi-greedy expansion of $1$ (see Section 2 for the definition of quasi-greedy expansions). Our strategy of the proof is to use the conditional variational principle for the the Hausdorff dimension of the frequency set: 
\[\dim_{H}\La_{\al(t)}=\frac{h_{\mu_t}(\sigma)}{\log\be}\]
for $t\in\R$, where $h_{\mu_t}(\sigma)$ denotes the measure-theoretic entropy of a unique equilibrium state $\mu_t$ for the potential $t\chi_{C_1}$ and $\chi_{C_1}$ denotes the indicator function of $C_1=\{(x_i)_{i=1}^\infty\in X_\be; x_1=1\}$ (Lemma \ref{4-4}). Following the thermodynamic formalism for symbolic dynamics, the equilibrium state can be given by $\mu_t=h_t\nu_t$, where $h_t$ and $\nu_t$ are an eigenfunction and an eigenmeasure corresponding to the leading eigenvalue of the transfer operator $\cL_t$ with the potential $t\chi_{C_1}$, respectively (Proposition \ref{4-2}). In Theorem \ref{quasi}, we show that $\cL_t$ is quasi-compact, i.e., there is a positive number $r_t$ less than its spectral radius such that any spectral value whose modulus is greater than $r_t$ is an isolated eigenvalue with finite multiplicity. Furthermore, for an isolated eigenvalue $\la\in\C$ with $|\la|>r_t$, we give explicit formulae for an eigenfunction and an eigenfunctional in Theorems \ref{Main B} and \ref{Main A}, respectively. From these formulae, we construct an analytic function $\Phi_t(z)$ for $t\in\R$, which is regarded as a sort of the determinant of $\cL_t$ (see e.g., \cite{Ba-Ke, Fl-La-Po, Ho-Ke}) such that for $\la\in\C$ with $|\la|>r_t$ it is an isolated eigenvalue of $\cL_t$ if and only if its inverse is a zero of $\Phi_t$ (Theorem \ref{determinant}). This yields that there is a unique leading eigenvalue $\la_t>1$, which is simple (Theorem \ref{Main C} (2) and (3)) and gives an exact formula for the entropy $h_{\mu_t}(\sigma)$, which also yields that for $\dim_H\La_{\al(t)}$. As a simple application, we give an explicit formula for $\dim_H\La_{\al}$ as a function of $\al$ in the case where $\be>1$ satisfies $\be^{N+1}-\be^N-1=0$ for some $N\geq1$ (Theorem \ref{7-1}). 
As another applications of our explicit formulae, we investigate some additional properties of the equilibrium state $\mu_t$ and the pressure function $P(t)=\log \la_t$, where $\la_t$ is the leading eigenvalue of $\cL_t$ for $t\in\R$. In particular, we show the analyticity of $P(t)$ with an explicit formula for its derivative (Proposition \ref{5-1} (3)) and the exponential decay of correlation functions for $\mu_t$
with an optimal bound of the exponential rate derived from the other zeros of $\Phi_t$ (Theorem \ref{Main C} (3)). 
In addition, we give upper and lower bounds for the maximal value $c_\be$ with $\dim_H\La_\al>0$ for $\al<c_\be$ (Theorem \ref{5-2}). We note that some results in Section 5, like the analyticity of $P(t)$, have been shown in more general setting (see e.g., \cite{Cl}). We find that such a result is also proved directly in our setting as a consequence of the explicit formulae related to $\cL_t$ for $t\in\R$. 

As a further application of an explicit formula for an eigenmeasure $\nu_t$ corresponding to $\la_t$, we investigate the distribution function $D_t(x)=\nu_t(\pi_\be^{-1}([0,x]))$ for $x\in[0,1]$ and $t\in\R$, where $\pi_\be(\ua)=\sum_{n=1}^\infty a_n/\be^n$ for $\ua=(a_n)_{n=1}^\infty\in X_\be$. We define the function $F_p(x)=D_{f(p)}(x)$ for $x\in[0,1]$, where $f(p)$ is the inverse function of $\la_t^{-1}$ for $t\in\R$ (Proposition \ref{5-1} (1) and (2) ensure the existence of the inverse function of $\la_t^{-1}$). In Section 6 we see that $F_p(x)$ is equal to the Lebesgue singular function if $\be=2$ (see \cite{Al-Ka, Ha-Ya}).  Furthermore, in Theorem \ref{Main E}, we show that the function $F_p(x)$ is real-analytic at $p=1/\be$ for any $x\in[0,1]$ and $G_\be(x):=(1/\be)\partial F_p(x)/\partial p|_{p=1/\be}$ is continuous but nowhere differentiable on $[0,1]$. We note that the Hata-Yamaguti formula \cite[Theorem 4.6]{Ha-Ya} states that $G_2(x)$ is equal to the Takagi function, which is well-known as a continuous but nowhere differentiable function on $[0,1]$. Theorem \ref{Main E} shows that the function $G_\be(x)$ is a generalization of the Takagi function for beta-expansions with the base $1<\be<2$, whose fractal properties can be expected to relate to the algebraic properties of $\be$.  

This paper is organized as follows. 
In Section 2, we summarize some notions including beta-shifts and some basic properties of transfer operators. 
In Section 3, we give explicit formulae for an eigenfunction and eigenfunctional for an isolated eigenvalue of the transfer operator $\cL_t$ with the potential $t\chi_{C_1}$ for any $t\in\R$ and 
construct the analytic function $\Phi_t$ for $t\in\R$. 
In Section 4, we give an exact formula for $\dim_H\La_{\al(t)}$ as an application of the explicit formulae. 
In Section 5, we investigate some properties of the pressure function $P(t)=\log\la_t$ for $t\in\R$. In addition, we give upper and lower bounds for the maximal value $c_\be$ such that $\dim_H\La_\al>0$ for $\al<c_\be$. 
In Section 6, we show an analogy of the Hata-Yamaguti formula coming from an explicit formula for the eigenmeasure $\nu_t$ for $t\in\R$. 
In Section 7, we give an explicit formula for $\dim_H\La_\al$ as a function of $\al$ if $\be>1$ satisfies $\be^{N+1}-\be^N-1=0$ for some integer $N\geq1$.
\end{section}

\begin{section}{Preliminaries}

\begin{subsection}{Beta-maps and beta-shifts}
In this section we recall some necessary notions for beta-maps and beta-shifts following \cite{Bl, It-Ta, Pa, Re}. 
For $1<\be\leq2$, the beta-map is defined by
\[\tau(x):=\tau_\be(x)=\be x-[\be x]\]
for $[0,1]$, where $[y]$ denotes the integer part of $y\geq0$. It is well-known that the map $\tau$ gives the \textit{greedy expansion} of $x\in[0,1]$:
\[x=\sum_{n=1}^\infty\frac{g_n(x)}{\be^n},\]
where $g_n(x):=g_n(\be,x)=[\be\tau_{}^{n-1}(x)]$ for $n\geq1$. Note that the expansion is equal to the binary expansion in the case of $\be=2$. 

A real number $x\in[0,1]$ is said to be \textit{simple} if there is a positive integer $n_0\geq1$ such that $\tau^{n_0-1}(x)=1/\be$. In this case, we have $g_n(x)=[\be\tau_{}^{n-1}(x)]=0$ for all $n\geq n_0+1$. 
We set $S(x)=n_0$ if $x$ is simple and $S(x)=\infty$ otherwise.
$\be$ is said to be simple if $1$ is simple.

In some situations, it is more useful for us to consider the \textit{quasi-greedy expansion} of $x\in(0,1]$ defined as follows. 
If $\be$ is simple, we set  
\[(q_n(1))_{n=1}^\infty=\overline{g_1(1)\dots g_{S(1)-1}(1)({g}_{S(1)}(1)-1)}^{\infty},\]
where $\overline{b_1\dots b_k}^\infty$ denotes the infinite concatenation of a $k$-length word $b_1\dots b_k$ for $k\geq1$ with non-negative integers $b_1$,$\dots$, $b_k$.
If $\be$ is non-simple, we set $(q_n(1))_{n=1}^\infty=(g_n(1))_{n=1}^\infty$. By the definition of the coefficient sequence $(q_n(1))_{n=1}^\infty$ we have that $\displaystyle{1=\sum_{n=1}^\infty}q_{n}(1)/\be^n$. The right-hand side of this equality is called the quasi-greedy expansion of $1$.
For a simple number $x\in(0,1)$, we define
$q_i(x)=g_i(x)\text{ for } 1\leq i<S(x),\ q_{S(x)}=0\text{ and } q_{S(x)+i}(x)=q_i(1)\text{ for } i\geq1$.
By setting $(q_n(x))_{n=1}^\infty=(g_n(x))_{n=1}^\infty$ for a non-simple number $x\in(0,1)$,
we have that $\displaystyle{x=\sum_{n=1}^\infty}q_{n}(x)/\be^n$. The right-hand side of this equality is called the quasi-greedy expansion of $x\in(0,1)$.

We define the {\textit{beta-shift}} for $1<\be\leq2$. Denote by $D:=D_\be$ the set of all simple numbers. 
The coding map 
$i: [0,1]\setminus D\to \{0,1\}^\N$ on the set of non-simple numbers is defined by
$i(x)=(g_n(x))_{n=1}^\infty$ for $x\in [0,1]\setminus D$. Let $\sigma:\{0,1\}^\N\to\{0,1\}^\N$ be the {\textit{left shift}}, i.e., the map defined by $\sigma((\omega_i)_{i=1}^\infty)=(\omega_{i+1})_{i=1}^\infty$ for $(\omega_i)_{i=1}^\infty\in\{0,1\}^\N$. By definition we have $i\circ\tau=\sigma\circ i$, which ensures that $\sigma(i([0,1]\setminus D))\subset i([0,1]\setminus D)$. Set $X_\be=\overline{i([0,1]\setminus D)}$, where $\overline{A}$ denotes the closure of $A\subset\{0,1\}^\N$ in the sense of the product topology on $\{0,1\}^\N$. Since the map $\sigma$ is continuous on $\{0,1\}^\N$, we have $\sigma(X_\be)\subset X_\be$ and the restriction of $\sigma$ to $X_\be$ is also continuous on $X_\be$. Note that $X_\be$ is compact on $\{0,1\}^\N$. We call the topological dynamics $(X_\be,\sigma)$ the beta shift for $\be$.

Let $n\geq1$ and $(a_i)_{i=1}^n\in\{0,1\}^n$. We define the $n$-cylinder set for $(a_i)_{i=1}^n$ by 
\[[a_1\dots a_n]_1^n=\{(x_i)_{i=1}^\infty\in X_\be; x_i=a_i \text{ for } 1\leq i\leq n\}.\]
We denote by $\cC_n$ the set of all $n$-cylinder sets and by $\cS_n$ the set of all $n$-cylinder sets each of whose image by $\sigma$ is a whole space $X_\be$. We call an element in $\cS_n$ an $n$-full cylinder set. 

We equip the set $\{0,1\}^\N$ with the lexicographic order $\prec$, which is defined by 
$(s_n)_{n=1}^\infty\prec(t_n)_{n=1}^\infty$ if there is a positive integer $m\geq1$ such that $s_n=t_n$ for $1\leq n<m$ and $s_m<t_m$ for $(s_n)_{n=1}^\infty, (t_n)_{n=1}^\infty\in \{0,1\}^\N$. 
We denote by $(s_n)_{n=1}^\infty\preceq(t_n)_{n=1}^\infty$ 
if $(s_n)_{n=1}^\infty\prec(t_n)_{n=1}^\infty$ or $(s_n)_{n=1}^\infty=(t_n)_{n=1}^\infty$ Note that this is a total order and its order topology 
equals to the product topology on $\{0,1\}^\N$. For $\ua, \ub \in\{0,1\}^\N$ we define $[\ua, \ub]=\{\ux\in X_\be; \ua\preceq \ux\preceq\ub\}$. 

Since 
the coding map $i$ preserves the order relation, for $x\in(0,1]$
the limit $\lim_{y\nearrow x}(g_n(y))_{n=1}^\infty$ exists in the sense of the product topology in $X_\be$. We can see that $(q_n(x))_{n=1}^\infty=\lim_{y\nearrow x}(g_n(y))_{n=1}^\infty$ by the definition of the quasi-greedy expansion of $x\in(0,1]$. 
Throughout this paper, we write $\ud=(q_n)_{n=1}^\infty:=(q_n(1))_{n=1}^\infty$ and $\um=\overline{0}^\infty$ for simplicity. Clearly, $\um,\ud\in X_\be$. 
The next proposition is due to Parry \cite{Pa} (see also \cite{Bl}), which characterizes an element of $X_\be$ in terms of the order relation to the coefficient sequence $\ud$ of the quasi-greedy expansion of $1$. 

\begin{proposition}\label{Parry1}
For $1<\be\leq2$, we have 
\[X_\be=\{\ux\in\{0,1\}^\N;\ \sigma^n(\ux)\preceq \ud\ \text{ for } n\geq0\}.\]
\end{proposition}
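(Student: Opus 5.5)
We prove the two inclusions separately. Write $Y=\{\ux\in\{0,1\}^\N;\ \sigma^n(\ux)\preceq \ud \text{ for } n\geq 0\}$. The first tool is the classical characterization of greedy expansions: for non-simple $x\in[0,1)$ we have $\sigma^n(i(x))\prec \ud$ for all $n\geq0$. The second is the order-preserving property of $i$ recorded above. The limit description $\ud=\lim_{y\nearrow 1}(g_n(y))_{n=1}^\infty$ connects the two.

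\emph{Inclusion $X_\be\subset Y$.} Let $x\in[0,1]\setminus D$ with $x<1$. By $i\circ\tau=\sigma\circ i$ we get $\sigma^n(i(x))=i(\tau^n x)$, and $\tau^n x\in[0,1)$ is again non-simple. Since $i$ preserves order and $\ud=\lim_{y\nearrow1}i(y)$, every $i(y)$ with $y<1$ satisfies $i(y)\preceq\ud$. Hence $\sigma^n(i(x))\preceq\ud$ for all $n$. If $x=1$ is non-simple, then $i(1)=\ud$ and $\sigma^n(\ud)=i(\tau^n1)\preceq\ud$ by the same argument, since $\tau^n 1<1$. Thus $i([0,1]\setminus D)\subset Y$. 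The set $Y$ is closed: each condition $\sigma^n(\ux)\preceq\ud$ is closed because $\sigma^n$ is continuous and $\{\uy;\uy\preceq\ud\}$ is closed in the order topology, which coincides with the product topology. Taking closures gives $X_\be\subset Y$.

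\emph{Inclusion $Y\subset X_\be$.} This is the main step. Let $\ux\in Y$ and put $x=\pi(\ux)=\sum x_k\be^{-k}$.

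First we show that $x\le1$, and more generally that $\sum_{k\ge1}x_{n+k}\be^{-k}\le 1$ for every $n$. Compare $\sigma^n\ux$ with $\ud$ blockwise. Let $m$ be the first index where $\sigma^n\ux$ and $\ud$ differ. Then $x_{n+m}=0$ and $q_m=1$, and the tail beyond position $m$ is again dominated, so an induction or iteration argument gives the bound. The identity $\sum_k q_{m+k}\be^{-k}\le 1$ is used here; it comes from $\ud$ itself being a quasi-greedy expansion.

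Next we show that if $\ux$ is not eventually $\sigma$-periodic equal to a shift of $\ud$ (so that strict inequalities $\sigma^n\ux\prec\ud$ hold for all $n$), then the greedy algorithm applied to $x$ reproduces $\ux$. The reason is that $\sum_k x_{n+k}\be^{-k}<1$ forces $x_{n}=[\be\tau^{n-1}x]$. Consequently $\ux=i(x)$ or $\ux$ is a limit of such points.

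In general, approximate $\ux$ by truncations $x_1\dots x_N\um$. These lie in $Y$, and one checks they satisfy the strict inequality, so they are greedy codings, possibly of simple numbers. Simple points are themselves limits from the left or right of non-simple points in the same cylinder, so each truncation lies in $X_\be$, and $\ux$ is their limit.

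The main obstacle is the boundary behaviour. We must verify that finite words admissible for $Y$ extend to codings of non-simple points; equivalently, the cylinder $[x_1\dots x_N]_1^N$ of $Y$ contains some $i(y)$. For this we will use that $\pi$ is monotone on $Y$ and that the interval $\pi([x_1\dots x_N]\cap Y)$ has nonempty interior. Nonempty interior holds because $\be>1$ and $\um$ is always an admissible continuation, so non-simple $y$ abound in that interval.
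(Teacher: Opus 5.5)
The paper gives no proof of this proposition; it quotes it from Parry (see also Blanchard). So the comparison is with the classical argument, whose skeleton you follow. Your inclusion $X_\be\subset Y$ is fine, given the two facts recorded just before the statement: order preservation of $i$, and $\ud=\lim_{y\nearrow1}i(y)$.

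\textbf{The gap.} In $Y\subset X_\be$ the decisive step is left as ``one checks'', and the reason you give elsewhere is not the property that is needed. Take $\uy\in Y$ with $\uy\neq\ud$, first disagreeing with $\ud$ at position $m$, so $y_m=0$ and $q_m=1$. Then
\[\pi(\uy)=1-\be^{-m}\bigl(1+\pi(\sigma^m\ud)\bigr)+\be^{-m}\pi(\sigma^m\uy).\]
Granting $\pi\le1$ on $Y$ (see below), this gives $\pi(\uy)\le1-\be^{-m}\pi(\sigma^m\ud)<1$. The inequality is strict \emph{only because} $\pi(\sigma^m\ud)>0$, that is, because the quasi-greedy sequence $\ud$ has infinitely many $1$'s.

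The same fact is what makes every truncation $x_1\dots x_N\um$ strictly below $\ud$ after every shift: its tails end in $\um$, so none of them equals $\ud$. This is exactly where quasi-greedy versus greedy matters, and without it the statement is false. For the golden mean the greedy expansion of $1$ is $11\um$. The set $\{\ux;\ \sigma^n\ux\preceq 11\um\ \text{for } n\geq0\}$ contains $11\um\succ\overline{10}^\infty=\ud$, and $11\um\notin X_\be$ by your first inclusion. The inequality $\sum_k q_{m+k}\be^{-k}\le1$ that you invoke holds for the greedy expansion too, so it cannot carry this step.

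\textbf{Two smaller repairs.}

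First, the bound $\pi(\sigma^n\ux)\le1$ does not follow from a plain induction on the first disagreement, because the tail $\sigma^m\uy$ is again an arbitrary element of $Y$. Instead put $s=\sup_Y\pi\le 1/(\be-1)$. Since $Y$ is $\sigma$-invariant, the identity above gives $\pi(\uy)\le 1+\be^{-m}(s-1)$. Together with $\pi(\ud)=1$, if $s>1$ this yields $s\le 1+(s-1)/\be<s$, a contradiction.

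Second, only right limits work in your approximation. The truncation $x_1\dots x_N\um$ is the greedy coding of the simple number $y_N=\pi(x_1\dots x_N\um)<1$. It equals $\lim i(z)$ as $z\searrow y_N$ through non-simple $z$, because $g_1,\dots,g_N$ are constant on a right neighbourhood of $y_N$. From the left, the codings converge to the quasi-greedy expansion of $y_N$, which differs from $x_1\dots x_N$ at the position of the last $1$.

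With these points supplied, your truncation argument already completes the proof, so your final paragraph is unnecessary. As written it is also unsupported: the single admissible continuation $\um$ gives one point of $\pi([x_1\dots x_N]_1^N\cap Y)$, not an interval with nonempty interior.
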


\end{subsection}

\begin{subsection}{Transfer operators}
This subsection is devoted to summarizing the basic properties of a transfer operator for a beta-map and its spectral decomposition (see e.g., \cite{Ba, Ba-Ke, Bo-Go, Ho-Ke, La-Yo, Li-So-Va}). 
For $1<\be\leq2$ let $(X_\be,\sigma)$ be the corresponding beta-shift. For a function $f:X_\be\to\C$, we define the {\textit{total variation}} $\var(f)$ by
\begin{align*}
\var(f)=\sup\Biggl\{&\sum_{i=1}^n|f(x_i)-f(x_{i-1})|\ ;\ n\geq1, x_0=\um, x_n=\ud \\
&\text{ and }x_i\in X_\be \text{ with } x_{i-1}\prec x_i \text{ for } 1\leq i\leq n\Biggr\}.
\end{align*}
Let us denote by $BV$ the set of functions of bounded variation, i.e., 
\[BV=\{f:X_\be\to\C\ ;\ \var(f)<\infty\}.\]
We endow the space $BV$ with the norm 
\[||f||_{BV}=|f|_\infty +\var(f),\]
where $|f|_\infty=\sup_{x\in X_\be}|f(x)|$. Then $(BV, ||\cdot||_{BV})$ is a Banach space (see e.g., \cite{Ba, Li-So-Va}).

For $\phi\in BV$, the {\textit{transfer operator}} $\cL_\phi: BV\to BV$ with the potential $\phi$ is defined by
\[\cL_\phi f(x)=\sum_{x=\sigma(y)}e^{\phi(y)}f(y)\] 
for $f\in BV$ and $x\in X_\be$. 
We note that $\cL_\phi f$ is given by a linear sum of the product of a bounded function and a function of bounded variation, which yields
$\cL_\phi(BV)\subset BV$. 

In terms of application, it is more useful for us to introduce a quotient space of $BV$ on which functions are identified if they are equal except on a countable set. Let 
\[\cN=\{f\in BV\ ;\exists\text{ a countable set }N \text{ s.t. } f(x)=0 \text{ for } x\in X_\be\setminus N\}.\]
Note that the space $\cN$ is a closed linear subspace in $BV$.
We define the quotient Banach space $\cB=BV/\cN$ with the complete norm 
$||f||_\cB:=\inf_{g\in\cN}||f+g||_{BV}$ for $f\in \cB$.
Then the transfer operator $\cL_\phi$ is also linear and bounded on $\cB$ (see e.g., \cite{Ba, Li-So-Va}). 

We now introduce a closed linear subspace $\cF\subset \cB$ satisfying $\cL_\phi\cF\subset \cF$ for $\phi\in\cF$, which plays an important role in this paper. We denote by $\chi_A$ the indicator function of a Borel set $A\subset X_\be$. Let $\cU$ be the linear subspace of functions each of which is represented by a finite linear sum of the indicator functions of intervals in $X_\be$. Set $\cF=\overline{\cU}\subset \cB$, where the closure is taken in the sense of the norm topology in $(\cB, ||\cdot||_\cB)$. Then $(\cF, ||\cdot||_\cB)$ is a Banach space. For a real-valued function $\phi\in\cF$, since $e^\phi\in\cF$ and $e^\phi\geq0$, we know that $\cL_\phi\cF\subset \cF$ and $\cL_\phi f\geq0$ whenever $f\geq0$. Furthermore, $\cL_\phi$ is bounded and linear on $\cB$ and so is on $\cF$. Let us denote by $\cF^*$ the dual space of $\cF$, i.e.,the space of bounded linear complex-valued functionals on $\cF$. We define the {\textit{dual operator}} $\cL_\phi^*$ of the transfer operator $\cL_\phi$ by
\[\cL_\phi^*\nu=\nu\circ\cL_\phi\]
for $\nu\in\cF^*$. Note that $\cL^*_\phi: \cF^*\to \cF^*$ is also linear and bounded on $\cF^*$ with $||\cL^*_\phi||=||\cL_\phi||$, where $||\cdot||$ denotes the operator norm. 

For an eigenvalue $\la\in\C$ of $\cL_\phi$, 
the {\textit{geometric multiplicity}} of $\la$ is the dimension of the eigenspace $\{f\in \cF\ ;\ (\la I-\cL_\phi)f=0\}$, where $I$ denotes the identity map. The {\textit{(algebraic) multiplicity}} of $\la$ is the dimension of the generalized eigenspace $\{f\in \cF\ ; \exists n\geq 1 \text{ s.t. }  (\la I-\cL_\phi)^n f=0\}$.

\begin{proposition}\label{functional}
    For a real-valued function $\phi\in\cF$ there are a positive eigenvalue $\la_\phi$ of $\cL_\phi$ and an eigenfunctional $\nu_\phi\in\cF^*$ corresponding to $\la_\phi$  satisfying $\nu_\phi(f)\geq0$ for $f\geq 0$ and $\nu_\phi(1)=1$. 
    \end{proposition}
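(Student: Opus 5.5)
A Schauder--Tychonoff fixed point argument applied to the normalized dual operator on the set of positive normalized functionals in $\cF^*$ should give this. Set
\[K=\{\nu\in\cF^*;\ \nu(f)\geq0 \text{ for } f\geq0,\ \nu(1)=1\}.\]
First I would check that $K$ is nonempty and convex. To get a nonempty $K$, take evaluation at a point $x$, or better a Dirac-type functional that is well defined on the quotient. Since elements of $\cF$ are defined only up to countable sets, I would instead use $f\mapsto \lim_{y\searrow x}f(y)$ (one-sided limits exist for $BV$ functions and do not see countable modifications). Alternatively I would use the functional induced by the Lebesgue-type measure $m_\be$ pulled back to $X_\be$, which is non-atomic and hence well defined on $\cB$. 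Positivity then gives a norm bound $|\nu(f)|\leq \nu(|f|)\leq |f|_\infty\leq \|f\|_\cB$ for real $f$. Here I use that for $f\in\cB$ the sup norm of a suitable representative is dominated by $\|f\|_\cB$, and that $-|f|_\infty\leq f\leq |f|_\infty$ modulo $\cN$. Hence $K$ is contained in a multiple of the unit ball and is weak-$*$ closed. By Banach--Alaoglu, $K$ is weak-$*$ compact and convex in the locally convex space $(\cF^*,\text{weak-}*)$.

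Next define $T:K\to K$ by $T\nu=\cL_\phi^*\nu/\nu(\cL_\phi 1)$. The denominator is positive because $\cL_\phi 1(x)=\sum_{\sigma y=x}e^{\phi(y)}\geq e^{-|\phi|_\infty}$, since every $x\in X_\be$ has the preimage $0x\in X_\be$ by Proposition \ref{Parry1} (as $0x\preceq\ud$ and all shifts of $0x$ are shifts of $x$ or $0x$ itself). Positivity of $\cL_\phi$ then gives $\nu(\cL_\phi 1)\geq e^{-|\phi|_\infty}>0$. Also $T\nu$ is positive with $T\nu(1)=1$, so $T$ maps $K$ into $K$. $T$ is weak-$*$ continuous because $\nu\mapsto\nu(\cL_\phi f)$ is weak-$*$ continuous for each $f$ and the denominator stays bounded away from $0$. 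The Schauder--Tychonoff theorem then yields $\nu_\phi\in K$ with $\cL_\phi^*\nu_\phi=\la_\phi\nu_\phi$, where $\la_\phi=\nu_\phi(\cL_\phi1)>0$.

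The main obstacle is the order and positivity bookkeeping on the quotient space $\cF$. One must make sure "$f\geq0$" is meaningful for classes (a representative that is nonnegative off a countable set) and that positive functionals on $\cF$ are bounded by $|f|_\infty\leq\|f\|_\cB$. This is what gives weak-$*$ boundedness of $K$, and hence compactness. The other delicate point is exhibiting one element of $K$. I would use the one-sided limit functional, or integration against $m_\be$ transported by $\pi_\be$ (well defined since countable sets are null), to avoid trouble with point evaluations on classes.
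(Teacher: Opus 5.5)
Your proposal is correct and is essentially the paper's proof. Both apply Schauder's fixed point theorem to $T\nu=\cL_\phi^*\nu/(\cL_\phi^*\nu)(1)$ on the weak-$*$ closed convex set of positive normalized functionals, get nonemptiness from the Lebesgue measure transported to $X_\be$ via $\pi_\be$, and bound the denominator below by $e^{\phi(0\ux)}\geq e^{-|\phi|_\infty}$.

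Your direct bound on $K$ via $-|f|_\infty\leq f\leq |f|_\infty$ (the paper only bounds $T\Lambda$, and only on nonnegative $f$) and your explicit check that $T$ is weak-$*$ continuous fill in details the paper leaves implicit. One caution on your alternative right-limit functional: it needs $x$ to be a right accumulation point of $X_\be$, such as $\um$, because, e.g., $0\ud$ has the immediate successor $1\um$ in $X_\be$.
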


    \begin{proof}
We follow a basic argument using the Schauder fixed point theorem.
Let 
\[\Lambda=\bigl\{\nu\in\cF^*:\ \nu(f)\geq0\ \text{if}\ f\in\cF\ \text{ with } f\geq0\ \text{ and }\ \nu(1)=1\bigr\}.\]
We define the functional $\nu'$ by
\[\nu'(f)=\int_{X_\be} f dl'\]
for $f\in\cF$, where $l'=l\circ\pi_\be$ is a probability measure on $X_\be$ given by the Lebesgue measure $l$ on $[0,1]$ and the map $\pi_\be: X_\be\to [0,1]$ defined by $\pi_\be((a_n)_{n=1}^\infty)=\sum_{n=1}^\infty a_n/\be^n$ for $(a_n)_{n=1}^\infty\in X_\be$. Then it is easy to see that $\nu'\in\Lambda$, which ensures that $\Lambda\neq\emptyset$. By the definition of $\Lambda$, we know that it is a convex set which is closed in the sense of the weak-$^*$topology on $\cF^*$. Let $T:\Lambda\to \cF^*$ be the map defined by 
\[T(\nu)=\frac{\cL_\phi^*\nu}{(\cL_\phi^*\nu)(1)}\]
for $\nu\in\Lambda$. Since $(T\nu)(1)=1$
and $T\nu(f)\geq0$ whenever $f\geq0$, we have $T(\Lambda)\subset\Lambda$. Note that 
\[|(\cL_\phi^*\nu)(f)|=|\nu(\cL_\phi f)|\leq |\cL_\phi f|_\infty\nu(1)\leq |\cL_\phi1|_\infty|f|_\infty\]
for $\nu\in\Lambda$ and $f\in\cF$ with $f\geq0$. Together with the fact that 
\[(\cL_\phi^*\nu)(1)\geq e^{\phi(0\ux)}\geq e^{-|\phi|_\infty}\]
for $\nu\in\Lambda$, we have
\[T\nu(f)=\frac{(\cL_\phi^*\nu)(f)}{(\cL_\phi^*\nu)(1)}\leq\frac{|\cL_\phi1|_\infty}{e^{-|\phi|_\infty}}|f|_\infty\leq\frac{|\cL_\phi1|_\infty}{e^{-|\phi|_\infty}}||f||_\cB,\]
which shows that 
\[||T\nu||\leq \frac{|\cL_\phi1|_\infty}{e^{-|\phi|_\infty}}\]
for $\nu\in\Lambda$. Hence $T\Lambda$ is bounded. This yields that $T\Lambda$ is included in some closed ball in $\cF^*$. By the Schauder fixed point theorem, there is a fixed point $\nu_\phi$ of $T$ on $\Lambda$. This yields that 
\[\frac{\cL_\phi^*\nu_\phi}{(\cL_\phi^*\nu_\phi)(1)}=\nu_\phi.\]
By setting $\la_\phi=(\cL_\phi^*\nu_\phi)(1)>0$, we have that 
\[\cL_\phi^*\nu_\phi=\la_\phi\nu_\phi\]
and $\nu_\phi(1)=(\cL_\phi^*\nu_\phi)(1)/(\cL_\phi^*\nu_\phi)(1)=1$, 
as desired.

\end{proof}

For a real-valued function $f$ and $n\geq1$ we write $S_nf=\sum_{i=0}^{n-1}f\circ\sigma^i$. For a function $\phi\in\cF$ we set
\[r_\phi=\limsup_{n\to\infty}{\sqrt[n]{\exp\Bigl(\sup_{\ux\in X_\be}S_n\phi(\ux)}\Bigr)}=\limsup_{n\to\infty}\sqrt[n]{|e^{S_n\phi}|_\infty}.\]
In the following, we show the Lasota-Yorke type inequality for $\cL_\phi$.

\begin{lemma}
    Let $\phi$ be a real-valued function in $\cF$ which is Lipschitz continuous on $X_\be$. Then 
    \begin{equation}\label{lyi}
        \var(\cL_\phi^n f)\leq |e^{S_n\phi}|_\infty\var(f)+(|e^{S_n\phi}|_\infty c_n+\var(e^{S_n\phi}))|f|_\infty,
    \end{equation}
    where $c_n$ denotes the number of the set $\cC_n$ of all $n$-cylinder sets. 
\end{lemma}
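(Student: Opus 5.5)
We prove the Lasota--Yorke inequality \eqref{lyi} by writing $\cL_\phi^n f$ as a sum over $n$-cylinders and controlling the variation cylinder by cylinder. For $\ux\in X_\be$, the preimages $\uy$ with $\sigma^n\uy=\ux$ are the words $a_1\dots a_n\ux$ that lie in $X_\be$. So
\[
\cL_\phi^n f(\ux)=\sum_{C=[a_1\dots a_n]_1^n\in\cC_n}\chi_{\sigma^n C}(\ux)\,\bigl(e^{S_n\phi}f\bigr)(a_1\dots a_n\ux).
\]
Two facts about a fixed cylinder $C\in\cC_n$ are needed.

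\begin{enumerate}[(i)]
\item The set $\sigma^nC$ is an interval of $X_\be$ with respect to $\prec$. Indeed, by Proposition \ref{Parry1}, $a_1\dots a_n\ux\in X_\be$ exactly when $\ux\in X_\be$ and $\sigma^{k}(a_1\dots a_n\ux)\preceq\ud$ for $0\le k<n$. Each of these constraints is an upper bound on $\ux$ in the lexicographic order. Hence $\sigma^nC=\{\ux\in X_\be;\ \ux\preceq \ub_C\}$ for a suitable $\ub_C$ (a minimum of finitely many tails of $\ud$), or $\{\ux\in X_\be;\ \ux\prec \ub_C\}$. In particular $\sigma^n C$ is an initial segment $[\um,\cdot]$.
\item The map $\psi_C:\ux\mapsto a_1\dots a_n\ux$ is order preserving from $\sigma^nC$ onto $C$.
\end{enumerate}

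Set $g_C=(e^{S_n\phi}f)\circ\psi_C$ on $\sigma^nC$. By (ii), any increasing chain in $\sigma^nC$ maps to an increasing chain in $C$, so
\[
\var_{\sigma^nC}(g_C)\le \var_C(e^{S_n\phi}f)\le |e^{S_n\phi}|_\infty\var_C(f)+|f|_\infty\var_C(e^{S_n\phi}),
\]
where $\var_C$ denotes the variation restricted to $C$ (supremum over chains in $C$). This uses the standard product rule $\var(uv)\le|u|_\infty\var(v)+|v|_\infty\var(u)$ on a restricted domain.

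Next we bound the variation of $\chi_{\sigma^nC}\,g_C$, extended by zero, over all of $X_\be$. Since $\sigma^nC$ is an interval by (i), the extension has at most one extra jump, at the right endpoint, of size at most $|g_C|_\infty\le|e^{S_n\phi}|_\infty|f|_\infty$. Therefore
\[
\var(\chi_{\sigma^nC}g_C)\le \var_{\sigma^nC}(g_C)+|e^{S_n\phi}|_\infty|f|_\infty .
\]

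We sum over $C\in\cC_n$, using subadditivity of $\var$ for sums. The $n$-cylinders are pairwise disjoint intervals arranged in lexicographic order, so variations on disjoint consecutive intervals add up to at most the total variation:
\[
\sum_C\var_C(f)\le\var(f),\qquad \sum_C\var_C(e^{S_n\phi})\le\var(e^{S_n\phi}).
\]
Combining these gives
\[
\var(\cL_\phi^nf)\le |e^{S_n\phi}|_\infty\var(f)+\var(e^{S_n\phi})|f|_\infty+c_n|e^{S_n\phi}|_\infty|f|_\infty,
\]
which is \eqref{lyi}. The Lipschitz hypothesis on $\phi$ is used only to guarantee that $e^{S_n\phi}$ has finite variation, so that the right-hand side is meaningful.

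The main obstacle is step (i): checking carefully, through Proposition \ref{Parry1}, that $\sigma^nC$ is an order interval of $X_\be$, and handling whether its right endpoint is attained. If that is shaky, the fallback is to note that $\sigma^nC$ is a finite intersection of sets of the form $\{\ux\preceq \uw\}$ (or with strict inequality), which is still an initial segment, so the one-jump bound holds anyway.
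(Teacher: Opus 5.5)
Your proof is correct and follows the same route as the paper. You write $\cL_\phi^n f$ as a sum of branches over the $n$-cylinders, bound each branch by its variation on the cylinder plus one jump of size at most $|e^{S_n\phi}|_\infty|f|_\infty$, and then sum using the product rule for variation. Your explicit check via Proposition \ref{Parry1} that each $\sigma^n C$ is an initial segment $[\um,\ub_C]$ supplies a detail the paper leaves implicit; in fact this segment is always closed, so the strict-inequality case you allow for never arises.
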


\begin{proof}
For $n\geq1$ let $\um=\ux_0\prec \ux_1\prec \ux_2\prec\dots\prec \ux_{c_n}=\ud$ be the endpoints of any elements in $\mathcal{C}_n$. For $\ua, \ub \in X_\be$ with $\ua\prec \ub$ and a function $f:X_\be\to\C$, we denote by $\var_{\ua}^{\ub}(f)$ the total variation of $f$ on $[\ua, \ub]$. Then for any points $\um=\uw_0\prec \uw_1\prec \dots\prec \uw_{N}=\ud$ we have
\begin{align*}
    \sum_{i=1}^N|\cL_\phi^nf(\uw_i)-\cL_\phi^nf(\uw_{i-1})|
&=\sum_{i=1}^N\Biggl|\sum_{\uw_i=\sigma^n(y)}e^{S_n\phi(y)}f(y)-\sum_{\uw_{i-1}=\sigma^n(z)}e^{S_n\phi(z)}f(z)\Biggr| \\
&\leq \sum_{i=1}^{c_n}\bigl(\var_{\ux_{_{i-1}}}^{\ux_i}(e^{S_n\phi} f)+|e^{S_n\phi} f|_\infty\bigr) \\
    &\leq \var(e^{S_n\phi}f)+c_n|e^{S_n\phi}f|_\infty \\
    &\leq |e^{S_n\phi}|_\infty\var(f)+\var(e^{S_n\phi})|f|_\infty+c_n|e^{S_n\phi}f|_\infty \\
    &\leq|e^{S_n\phi}|_\infty\var(f)+(|e^{S_n\phi}|_\infty c_n+\var(e^{S_n\phi}))|f|_\infty,
\end{align*}
which ends the proof. 
\end{proof}

Note that $\cL_\phi$ is a compact operator from $(\cF,||\cdot||_{\cB})$ to $(\cF,|\cdot|_{\infty})$, which follows from that a bounded set in the sense of the norm $||\cdot||_{\cB}$ is relatively compact in the topology from the norm $|\cdot|_{\infty}$(see e.g., \cite{Ba, Li-So-Va}). Since $\la_\phi$ is a positive eigenvalue, the spectral radius of $\cL_\phi$ is greater than or equal to $\la_\phi$. 

If there is a positive integer $n_0$ such that 
\[\alpha:=\frac{|e^{S_{n_0}\phi}|_\infty}{\la_{\phi}^{n_0}}<1,\] 
then for any $n=kn_0+r$, where $k\geq0$ and $0\leq r\leq n_0-1$, we have 
\[\frac{|e^{S_{n}\phi}|_\infty}{\la_{\phi}^{n}}
\leq \frac{|e^{S_{r}\phi}|_\infty}{\la_{\phi}^{r}}
\Biggl(\frac{|e^{S_{n_0}\phi}|_\infty}{\la_{\phi}^{n_0}}\Biggr)^k
\leq \max_{0\leq r \leq n_0-1}\Biggl(\frac{|e^{S_{r}\phi}|_\infty}{\la_{\phi}^{r}}\Biggr)\alpha^k,\]
which yields 
\[\sqrt[n]{\frac{|e^{S_n\phi}|_\infty}{\la_{\phi}^n}}
\leq\sqrt[n]{ \max_{0\leq r \leq n_0-1}\Biggl(\frac{|e^{S_{r}\phi}|_\infty}{\la_{\phi}^{r}}\Biggr)\alpha^k}\to \alpha^{\frac{1}{n_0}}<1\]
as $n\to\infty$. This shows that 
\[r_\phi=\limsup_{n\to\infty}\sqrt[n]{|e^{S_n\phi}|_\infty}\leq \alpha^{\frac{1}{n_0}}\la_\phi<\la_\phi.\]
In particular, if 
$|e^{S_n\phi}|_\infty/\la_\phi^n\to0$ as $n\to\infty$, we obtain that there is a positive integer $n_0$ such that $|e^{S_{n_0}\phi}|_\infty/\la_{\phi}^{n_0}<1$.  
Applying \cite[Th\'eor\`eme 1]{He} (see also \cite[Remark 1.2]{Ba}) to our setting, we have the quasi-compactness of $\cL_\phi$:
\begin{theorem}\label{quasi}
   Assume that $\phi\in \cF$ is a real-valued and Lipshitz continuous function with $|e^{S_n\phi}|_\infty/\la_\phi^n\to0$ as $n\to\infty$. 
   Then the transfer operator $\cL_\phi$ is quai-compact. That is, a spectral value $\la$ with $|\la|>r_\phi$ is an isolated eigenvalue with finite multiplicity. In particular, $\la_\phi$ is a positive isolated eigenvalue with finite multiplicity. 
\end{theorem}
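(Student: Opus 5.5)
The plan is to apply Hennion's theorem \cite[Th\'eor\`eme 1]{He} to $\cL_\phi$ acting on $(\cF,\|\cdot\|_\cB)$, with the weak norm $|\cdot|_\infty$. Hennion's criterion has two ingredients. First, $\cL_\phi$ must be compact from $(\cF,\|\cdot\|_\cB)$ to $(\cF,|\cdot|_\infty)$. Second, a Doeblin--Fortet (Lasota--Yorke) inequality must hold:
\[
\|\cL_\phi^n f\|_\cB\le R_n\|f\|_\cB+C_n|f|_\infty .
\]
The conclusion is that the essential spectral radius is at most $\liminf_n R_n^{1/n}$. The compactness is already recorded before the statement: bounded sets in $\|\cdot\|_\cB$ are relatively compact for $|\cdot|_\infty$, via Helly-type selection.

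For the inequality, I will combine \eqref{lyi} with the trivial bound $|\cL_\phi^n f|_\infty\le |\cL_\phi^n 1|_\infty|f|_\infty$. Passing to the quotient $\cB$ should be harmless: modifying $f$ on a countable set changes $\cL_\phi^n f$ only on a countable set, since preimages are finite. Taking infimum over representatives then gives
\[
\|\cL_\phi^n f\|_\cB\le |e^{S_n\phi}|_\infty\|f\|_\cB+C_n|f|_\infty ,
\]
with $C_n=|e^{S_n\phi}|_\infty c_n+\var(e^{S_n\phi})+|\cL_\phi^n1|_\infty<\infty$. The Lipschitz hypothesis on $\phi$, together with $\phi\in\cF$, ensures that $\var(e^{S_n\phi})$ is finite. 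So $R_n=|e^{S_n\phi}|_\infty$, and Hennion yields essential spectral radius at most $r_\phi$.

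The hypothesis $|e^{S_n\phi}|_\infty/\la_\phi^n\to 0$ is then used via the computation just preceding the theorem. It gives $n_0$ with $\alpha<1$, hence $r_\phi\le\alpha^{1/n_0}\la_\phi<\la_\phi$. The spectral radius is at least $\la_\phi$, because Proposition \ref{functional} gives a positive eigenfunctional with eigenvalue $\la_\phi$, so $\la_\phi$ lies in the spectrum of $\cL_\phi^*$ and hence of $\cL_\phi$. Therefore the spectral radius strictly exceeds $r_\phi$, so $\cL_\phi$ is quasi-compact. Every spectral value with $|\la|>r_\phi$ is then an isolated eigenvalue of finite multiplicity. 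Since $\la_\phi>r_\phi$ and $\la_\phi$ belongs to the spectrum, it is such an isolated eigenvalue of $\cL_\phi$ itself, not only of the dual.

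I expect the main obstacle to be technical: checking that the hypotheses of Hennion's theorem hold in the quotient space $\cF\subset\cB$. In particular, $|\cdot|_\infty$ must be well defined on classes (take the essential sup modulo countable sets), and the compactness of the unit ball of $\cF$ in the weak norm must be verified. I would handle this by citing \cite{Ba, Li-So-Va} as the paper does.
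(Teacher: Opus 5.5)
Your outline is the paper's own argument: the bound \eqref{lyi}, compactness of $\cL_\phi$ from $(\cF,\|\cdot\|_\cB)$ to $(\cF,|\cdot|_\infty)$, Hennion's theorem, and $r_\phi\le\al^{1/n_0}\la_\phi<\la_\phi\le R(\cL_\phi)$. Your justification of $\la_\phi\le R(\cL_\phi)$ through the spectrum of $\cL_\phi^*$ is fine.

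The step that fails is the compactness. It is false both as you state it and as the paper states it, so Hennion's theorem cannot be applied with $|\cdot|_\infty$ as the weak norm. Helly's selection theorem gives a pointwise convergent subsequence, not a uniformly convergent one. Concretely, let $\phi=t\chi_{C_1}$ and $f_k=\chi_{[\um,\overline{0}^k\ud]}$ for $k\ge0$ (so $f_0=1$).
\begin{itemize}
\item Each $f_k$ satisfies $\|f_k\|_{BV}\le 2$, and Lemma~\ref{key lemma} gives $\cL_tf_k=f_{k-1}$ for $k\ge1$.
\item For $0\le j<k$, $f_j-f_k$ is the indicator of $[\um,\overline{0}^j\ud]\setminus[\um,\overline{0}^k\ud]$. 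This set is uncountable: it contains $\overline{0}^j1\overline{0}^m\uy$ for every $\uy\in X_\be$ once $m$ is large.
\end{itemize}
So $\cL_t$ maps a $\|\cdot\|_\cB$-bounded sequence onto one whose terms are pairwise at distance $1$. This holds even for the supremum taken modulo countable sets, so your proposed essential-sup weak norm does not rescue the claim.

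The standard repair is to use the weak norm $\|f\|_{L^1(l')}=\int_{X_\be}|f|\,dl'$, with $l'$ the pull-back of Lebesgue measure from the proof of Proposition~\ref{functional}. Since $l'$ has no atoms, this norm is well defined on $\cB$ and independent of the representative. That also makes the passage to the quotient immediate, which with $|\cdot|_\infty$ would need a representative nearly minimising both norms at once. Helly's theorem plus dominated convergence makes $\|\cdot\|_\cB$-bounded sets relatively compact for this norm.

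The Lasota--Yorke inequality must then be redone with this norm in the last term:
\begin{itemize}
\item The proof of \eqref{lyi} actually gives $\var(\cL_\phi^nf)\le\sum_{I\in\cC_n}\bigl(\var_I(e^{S_n\phi}f)+\sup_I|e^{S_n\phi}f|\bigr)$ and $|\cL_\phi^nf|_\infty\le\sum_{I\in\cC_n}\sup_I|e^{S_n\phi}f|$.
\item Every $I\in\cC_n$ has $l'(I)>0$, so on each cylinder you may use $\sup_I|h|\le\var_I(h)+l'(I)^{-1}\int_I|h|\,dl'$.
\item For $\phi=t\chi_{C_1}$, $S_n\phi$ is constant on $n$-cylinders, and this yields $\|\cL_t^nf\|_{BV}\le 3|e^{S_n\phi}|_\infty\|f\|_{BV}+C_n\|f\|_{L^1(l')}$.
\end{itemize}
Hennion's theorem then bounds the essential spectral radius by $\lim_n(3|e^{S_n\phi}|_\infty)^{1/n}=r_\phi$. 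For a general $\phi\in\cF$ the factor $3$ becomes $O(n)$, using $\var_I(e^{S_n\phi})\le|e^{S_n\phi}|_\infty\var_I(S_n\phi)\le n\var(\phi)|e^{S_n\phi}|_\infty$; this does not affect the $n$-th root. With this change, the rest of your argument goes through as written.
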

Let us denote by $R(\cL_\phi)$ the spectral radius of $\cL_\phi$. Under the same assumptions as in Theorem \ref{quasi}, 
$\cL_\phi$ is quasi-compact on $\cF$, which yields that for $\theta\in(r_\phi, R(\cL_\phi))$ the spectral decomposition of $\cL_\phi$ (see e.g., \cite{Ha}) is given by
\begin{equation}\label{decomposition}
\cL_\phi f=\sum_{i=1}^N \la_i h_i^{\bot} J_{i} \nu_{i}(f) +\cP\cL_{\phi}
\end{equation} 
for $f\in\cF$, where $\la_i$ is an isolated eigenvalue with finite multiplicity $M_i \geq1$ with $|\la_i|>\theta$, $h_i$ is the vector of a basis $(h_{i,1},\dots, h_{i,M_i})$ for the corresponding generalized eigenspace, $J_i$ is a Jordan matrix composed by some Jordan blocks each of whose diagonals are $1$, and $\nu_i$ is the vector of eigenfunctionals $(\nu_{i,1},\dots,\nu_{i,M_i})$ satisfying $\nu_{i,j}(h_{k,l})=1$ if $i=k$ and $j=l$, and $\nu_{i,j}(h_{k,l})=0$ otherwise. 
Here $\cP$ is a linear operator whose spectral radius is less than or equal to $\theta$.

\end{subsection}

\end{section}

\begin{section}{Some explicit formulae related to transfer operators}
This section is devoted to giving some explicit formulae for an eigenfunction and an eigenfunctional corresponding to an isolated eigenvalue of $\cL_t:=\cL_{t\chi_{C_1}}$ for $t\in\R$, where $\chi_{C_1}$ denotes the indicator function of the cylinder set $C_1=\{(x_i)_{i=1}^\infty\in X_\be; x_1=1\}$.  
Since the potential $t\chi_{C_1}$ is a real-valued function in $\cF$, we can apply Proposition \ref{functional} to $\phi=\phi_t=t\chi_{C_1}$. In the following, we denote $\la_t=\la_{t\chi_{C_1}}$, $\nu_t=\nu_{t\chi_{C_1}}$ and $r_t=r_{t\chi_{C_1}}$ 
for simplicity. 
The following result states  that the transfer operator $\cL_t$ for any $t\in\R$ is quasi-compact, for which more general cases are treated in \cite{Li-So-Va, Wa2}. We show that the potential $t\chi_{C_1}$ satisfies the assumptions in Theorem \ref{quasi}
for any $t\in\R$ using the relation between the number of full $n$-cylinders and the value of $\inf_{\ux\in X_\be}(\cL_{t}^n1)(\ux)$ for $n\geq1$.

\begin{proposition}\label{hyperbolic potential}
    For any $t\in\R$ the potential function $t\chi_{C_1}$ satisfies the assumptions in Theorem \ref{quasi}. In particular, $r_t<\la_t$ and $\cL_t$ is quasi-compact on $\cF$. 
\end{proposition}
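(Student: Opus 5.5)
The plan is to check the three hypotheses of Theorem \ref{quasi} for $\phi=t\chi_{C_1}$. Two are immediate. First, $\chi_{C_1}$ is the indicator of the interval $[1\um,\ud]$ in $X_\be$, so it lies in $\cU\subset\cF$ and is real-valued. Second, $\chi_{C_1}$ depends only on the first coordinate, so it is locally constant and hence Lipschitz for the usual metric on $\{0,1\}^\N$. The real work is the growth condition $|e^{S_n\phi}|_\infty/\la_t^n\to0$. Once that holds, the discussion preceding Theorem \ref{quasi} gives $r_t<\la_t$, and the theorem gives quasi-compactness.

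\emph{Lower bound for $\la_t^n$.} Since $\nu_t$ is positive with $\nu_t(1)=1$ and $\cL_t^*\nu_t=\la_t\nu_t$, we have $\la_t^n=\nu_t(\cL_t^n1)\geq\inf_{\ux\in X_\be}(\cL_t^n1)(\ux)$. If $[u]\in\cS_n$ is a full $n$-cylinder, then every $\ux\in X_\be$ has the preimage $u\ux$ under $\sigma^n$, with weight $e^{t\,|u|_1}$, where $|u|_1$ is the number of $1$'s in $u$. Hence
\[
\inf_{\ux}\cL_t^n1(\ux)\geq\sum_{[u]\in\cS_n}e^{t|u|_1}.
\]

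\emph{Constructing many full words (the key step).} Let $M_n$ be the maximal number of $1$'s in an admissible word of length $n$, and fix an admissible $u$ attaining it. Write $P=\{i: u_i=1\}$ and let $i^*=\max P$. For any $A\subset P$ with $i^*\in A$, let $u^A$ be $u$ with all $1$'s at positions in $A$ turned into $0$'s. I claim every such $[u^A]$ is a full cylinder, and will check this with Proposition \ref{Parry1}: for $\ux\in X_\be$ I need $\sigma^j(u^A\ux)\preceq\ud$ for every $j\geq0$.
\begin{itemize}
\item For $j\geq n$ this is just the admissibility of $\ux$.
\item For $j$ with $j+1\leq i^*$, the suffix $u_{j+1}\dots u_n$ of $u$ is lexicographically $\preceq q_1\dots q_{n-j}$, because $u$ is admissible. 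Lowering a $1$ at position $i^*$ makes the suffix of $u^A$ strictly smaller than $q_1\dots q_{n-j}$. So $\sigma^j(u^A\ux)\prec\ud$ whatever $\ux$ is.
\item For $j\geq i^*$, the suffix of $u^A$ is $0^{n-j}$, and $0^{n-j}\ux\preceq\ud$ because $q_1=1$ and $\ux\preceq\ud$.
\end{itemize}
The subsets $A$ range over $2^{M_n-1}$ choices, which gives
\[
\la_t^n\geq e^{t}\,e^{tM_n}\cdot e^{-t}\cdot e^{-t(M_n-1)}\sum_{A}e^{t(M_n-|A|)}=\sum_{k=0}^{M_n-1}\binom{M_n-1}{k}e^{tk}=(1+e^{t})^{M_n-1}.
\]

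\emph{Conclusion.} The upper bound is $|e^{S_n\phi}|_\infty=e^{\max(t,0)M_n}$: for $t\leq0$ the word $0^n$ shows it equals $1$. For $t>0$ I will instead keep only the subsets with $|A|=1$ together with the weights. More cleanly, for $t>0$,
\[
\la_t^n\geq\sum_{A\ni i^*}e^{t(M_n-|A|)}=e^{t(M_n-1)}(1+e^{-t})^{M_n-1}.
\]
So in both cases
\[
\frac{|e^{S_n\phi}|_\infty}{\la_t^n}\leq e^{|t|}\,(1+e^{-|t|})^{-(M_n-1)}.
\]
It remains to see $M_n\to\infty$. For $\be>1$ the sequence $\ud$ is not $1\um$, so it has a second $1$ at some position $k+1$. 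Then $(10^{k})^\infty$ is admissible, and $M_n\geq\lfloor n/(k+1)\rfloor$. This gives the required decay.

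I expect the main obstacle to be the verification that the modified words $u^A$ are genuinely full. That relies on the observation that lowering the last $1$ of an admissible word makes every suffix through that position strictly lexicographically below the corresponding prefix of $\ud$. I would double-check this strictness carefully, including the boundary case where the suffix equals a prefix of $\ud$ exactly.
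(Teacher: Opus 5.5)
Your proof is correct, and it follows the same skeleton as the paper's. Both bound $\la_t^n=\nu_t(\cL_t^n1)$ from below by a weighted count of full $n$-cylinders. Both produce these cylinders by switching off $1$'s in a word with the maximal number $N=M_n$ of $1$'s, always including the last one.

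The difference is the family of words, and it matters.
\begin{itemize}
\item The paper uses only the $N-1$ words obtained by deleting the last $1$ and one further $1$. This gives $\inf\cL_t^n1\geq (N-1)e^{(N-2)t}$. It then compares this with $|e^{S_n\phi_t}|_\infty\leq e^{Nt}$ to get the ratio $e^{2t}/(N-1)$.
\item That comparison is valid only for $t\geq0$. For $t<0$ one has $|e^{S_n\phi_t}|_\infty=1$ (attained at $\um$), while $(N-1)e^{(N-2)t}\to0$. So the paper's estimate does not give the conclusion for negative $t$.
\item You use all $2^{M_n-1}$ subsets $A\ni i^*$, which gives the exponential bound $(1+e^t)^{M_n-1}$. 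Combined with the correct value $|e^{S_n\phi}|_\infty=e^{\max(t,0)M_n}$ and your explicit bound $M_n\geq\lfloor n/(k+1)\rfloor$, this covers every $t\in\R$ uniformly.
\item In particular, your bound yields $\la_t\geq(1+e^t)^{1/(k+1)}>1$ directly. The paper's proof of $\la_t>1$ in Proposition \ref{5-1} cannot be used here, because it already relies on the present proposition.
\end{itemize}

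You also check fullness through Proposition \ref{Parry1} in detail, whereas the paper only asserts it. Your strictness argument handles the boundary case you were worried about. Lowering a bit inside the suffix makes it strictly smaller than a word that is $\preceq$ the matching prefix of $\ud$, so every continuation is admissible.

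Two cosmetic fixes are needed:
\begin{itemize}
\item In your first display, the prefactor $e^{t}e^{tM_n}e^{-t}e^{-t(M_n-1)}$ equals $e^{t}$, not $1$. Drop it and write $\la_t^n\geq\sum_{A\ni i^*}e^{t(M_n-|A|)}=(1+e^t)^{M_n-1}$.
\item Remove the sentence about keeping only the subsets with $|A|=1$. That choice gives a single word and would not suffice; your ``more cleanly'' bound already replaces it.
\end{itemize}
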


\begin{proof} 
For any $t\in\R$, the function $t\chi_{C_1}$ is locally constant on $C_1$ and $C_1^c$, which is also Lipshitz continuous. To see that $|e^{S_n\phi_t}|_\infty/\la_t^n\to0$ as $n\to\infty$, it is sufficient to show that $|e^{S_n\phi_t}|_\infty/\inf_{\ux\in X_\be} (\cL_t^n1)(\ux)\to0$ as $n\to\infty$  since 
    \[\la_t^n=\la_t^n\nu_t(1)=\nu_t(\cL_t^n1)\geq \inf_{\ux\in X_\be} (\cL_t^n1)(\ux),\]
which yields $|e^{S_n\phi_t}|_\infty/\la_t^n\leq |e^{S_n\phi_t}|_\infty/\inf_{\ux\in X_\be} (\cL_t^n1)(\ux)\to0$ as $n\to\infty$. 

For $n\geq1$ the function $\psi_n:=\sum_{i=0}^{n-1}\chi_{C_1}\circ \sigma^i$ is locally constant and actually constant on any $I\in \cC_n$. Then $\psi_n$ reaches its maximum in some $J\in \cC_n$. Let $n\geq1$ be so large as $|\psi_n|_\infty\geq3$. Take $\ux'=(x'_i)_{i=1}^\infty\in X_\be$ so that $\psi_n(\ux')=|\psi_n|_{\infty}$ and set $I'=[x_1'\dots x_n']_1^n$. By definition we know that $\sum_{i=1}^n x_i'=\psi_n(\ux')$. Set $N=\psi_n(\ux')$. Then there is a sequence of positive integers $(n_k)_{k=1}^N$ such that $x_{n_k}=1$ for $1\leq k\leq N$. For $1\leq i\leq N-1$ we set $J_i=[y_{i,1}\dots y_{i,n}]_1^n$ where $y_{i,m}=1$ if $m=n_k$ for $k\in\{1,\dots, N-1\}\setminus\{i\}$ and $y_{i,m}=0$ otherwise. Then each $J_i$ is a full $n$-cylinder set since 
    \[y_{i,1}\dots y_{i,n}q_1q_2\dots \in X_\be\]
    for $1\leq i\leq N-1$ by Proposition \ref{Parry1}, where $(q_n)_{n=1}^\infty=(q_n(1))_{n=1}^\infty$ is the coefficient sequence of the quasi-greedy expansion of $1$. By the definition of $J_i$ we know that $\sum_{m=1}^n y_{i,m}=N-2$ for $1\leq i\leq N-1$. This shows that 
    \begin{align*}
        \inf_{\ux\in X_\be} (\cL_t^n1)(\ux)&=(\cL_t^n1)(\ud)=\sum_{[a_1\dots a_n]_1^n\in \cS_n}\exp\Biggl(\sum_{i=1}^n a_i\cdot t\Biggr) \\
        &\geq (N-1)\exp((N-2)t).
    \end{align*}
    Since $N\to\infty$ as $n\to\infty$ we have 
    \begin{align*}
        \frac{|e^{S_n\phi_t}|_\infty}{\inf_{\ux\in X_\be} (\cL_t^n1)(\ux)}\leq \frac{\exp(Nt)}{(N-1)\exp((N-2)t)}=\frac{e^{2t}}{N-1}\to0
    \end{align*}
    as $n\to\infty$, which gives the conclusion. 
\end{proof}

The next equation is a key tool for the proofs of the main results. 

\begin{lemma}\label{key lemma}
    For $\ua=(a_i)_{i=1}^\infty\in X_\be$, we have 
\begin{equation}
\cL_t(\chi_{[\um,\ua]})=a_1\chi_{[\um,\ud]}+l(a_1)\chi_{[\um,\sigma(\ua)]},
\end{equation}
where $l(a_1)$ is given by
\[l(a_1)=\begin{cases}
1 & \text{ if } a_1=0, \\
e^t & \text{ if } a_1=1.
\end{cases}\]
\end{lemma}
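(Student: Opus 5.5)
Fix $\ux\in X_\be$ and evaluate both sides pointwise. By definition,
\[
\cL_t(\chi_{[\um,\ua]})(\ux)=\sum_{\ux=\sigma(\uy)}e^{t\chi_{C_1}(\uy)}\chi_{[\um,\ua]}(\uy).
\]
The preimages of $\ux$ under $\sigma$ in $X_\be$ are among $0\ux$ and $1\ux$. The plan is to decide, for each preimage, whether it lies in $X_\be$ and whether it lies in $[\um,\ua]$. Proposition \ref{Parry1} handles the first question, and the lexicographic order handles the second. The weight $e^{t\chi_{C_1}}$ equals $1$ on $0\ux$ and $e^t$ on $1\ux$.

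\emph{The preimage $0\ux$.} It always lies in $X_\be$. Indeed, $\sigma^n(0\ux)$ is either $0\ux$ itself, which starts with $0$ and so is $\prec\ud$ because $q_1=1$, or some $\sigma^{n-1}(\ux)\preceq\ud$. For membership in $[\um,\ua]$, split on $a_1$. If $a_1=1$, then $0\ux\prec\ua$ for every $\ux\in X_\be$, so the contribution is $1$ for all $\ux$, i.e.\ $\chi_{[\um,\ud]}$, which matches the term $a_1\chi_{[\um,\ud]}$. If $a_1=0$, then $0\ux\preceq\ua=0\sigma(\ua)$ if and only if $\ux\preceq\sigma(\ua)$. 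The contribution is then $\chi_{[\um,\sigma(\ua)]}(\ux)$ with coefficient $1=l(0)$. Here $\sigma(\ua)\in X_\be$, so the interval notation makes sense.

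\emph{The preimage $1\ux$.} If $a_1=0$, then $1\ux\succ\ua$, so it never contributes, consistent with $a_1\chi_{[\um,\ud]}=0$. If $a_1=1$, then $1\ux\preceq\ua$ if and only if $\ux\preceq\sigma(\ua)$. The main point is to check that whenever $\ux\preceq\sigma(\ua)$, the word $1\ux$ is automatically in $X_\be$. By Proposition \ref{Parry1} it suffices to show $1\ux\preceq\ud$, since all further shifts are shifts of $\ux$. This holds because $1\ux\preceq 1\sigma(\ua)=\ua\preceq\ud$, using $\ua\in X_\be$. So the contribution is $e^t\chi_{[\um,\sigma(\ua)]}(\ux)=l(1)\chi_{[\um,\sigma(\ua)]}(\ux)$.

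Adding the two contributions in each case gives the identity pointwise on $X_\be$, hence in $\cB$. The only genuinely delicate step is the admissibility of $1\ux$ in the case $a_1=1$. That step is handled by monotonicity of the order together with Parry's characterization, so I expect no real obstacle beyond careful bookkeeping of the lexicographic comparisons.
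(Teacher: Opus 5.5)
Your proof is correct and follows essentially the same route as the paper: you evaluate $\cL_t\chi_{[\um,\ua]}$ pointwise over the two candidate preimages $0\ux$ and $1\ux$, split on $a_1$, and use Parry's characterization for admissibility. The one small difference is that you show $1\ux\in X_\be$ directly from $1\ux\preceq 1\sigma(\ua)=\ua\preceq\ud$, rather than through the criterion $\ux\preceq\sigma(\ud)$; this handles the boundary point cleanly and makes the paper's exclusion of $\ux=\sigma(\ud)$ unnecessary.
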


\begin{proof}
    By Proposition \ref{Parry1}, we have 
    \[\sigma^{-1}(\{\ux\})=
    \begin{cases}
        \{0\ux, 1\ux \} & \text{ if } \ux\preceq\sigma(\ud), \\
        \{0\ux\} & \text{ if } \ux\succ\sigma(\ud) \\
    \end{cases}
    \]
    for $\ux\in X_\be$. 
    
Assume that $\ua=\Ua\in X_\be$ satisfies $a_1=0$. Then 
\[\begin{split}
    \cL_t\chi_{[\um,\ua]}(\ux)
    &=\sum_{\uy: \ux=\sigma(\uy)}e^{t\chi_{C_1}(\uy)}\chi_{[\um,\ua]}(\uy) \\
    &=\begin{cases}
        e^{t\chi_{C_1}(0\ux)}\chi_{[\um,\ua]}(0\ux)+e^{t\chi_{C_1}(1\ux)}\chi_{[\um,\ua]}(1\ux) & \text{ if } \ux\preceq\sigma(\ud), \\
        e^{t\chi_{C_1}(0\ux)}\chi_{[\um,\ua]}(0\ux)
        & \text{ if } \ux\succ\sigma(\ud)
    \end{cases} \\
    &=e^{t\chi_{C_1}(0\ux)}\chi_{[\um,\ua]}(0\ux) \\
    &=\begin{cases}
        1& \text{ if } \ux\preceq\sigma(\ua), \\
        0 & \text{ if } \ux\succ\sigma(\ua)
    \end{cases}\\
    &=0\cdot\chi_{[\um,\ud]}(\ux)+1\cdot\chi_{[\um,\sigma(\ua)]}(\ux),
\end{split}\]
which gives the conclusion. 

Assume that $\ua=\Ua\in X_\be$ satisfies $a_1=1$. Then 
\[\begin{split}
    \cL_t\chi_{[\um,\ua]}(\ux)
    &=\sum_{\uy: \ux=\sigma(\uy)}e^{t\chi_{C_1}(\uy)}\chi_{[\um,\ua]}(\uy) \\
    &=\begin{cases}
        e^{t\chi_{C_1}(0\ux)}\chi_{[\um,\ua]}(0\ux)+e^{t\chi_{C_1}(1\ux)}\chi_{[\um,\ua]}(1\ux) & \text{ if } \ux\preceq\sigma(\ud), \\
        e^{t\chi_{C_1}(0\ux)}\chi_{[\um,\ua]}(0\ux)
        & \text{ if } \ux\succ\sigma(\ud)
    \end{cases} \\
    &=1+\chi_{[0,\sigma(\ud)]}(\ux)e^{t\chi_{C_1}(1\ux)}\chi_{[\um,\ua]}(1\ux) \ \ \ (\ux\neq \sigma(\ud))\\
    &=\begin{cases}
        1+e^t& \text{ if } \ux\preceq\sigma(\ua), \\
        1 & \text{ if } \ux\succ\sigma(\ua)
    \end{cases}\\
    &=1\cdot\chi_{[\um,\ud]}(\ux)+e^t\cdot\chi_{[\um,\sigma(\ua)]}(\ux),
\end{split}\]
which ends the proof. 
\end{proof}

For an integer $n\geq0$ and $t\in\R$ we define the function $H_n(t,\cdot):X_\be\to\R$ by
\[H_n(t,\ux)=
\begin{cases}
    1 & \text{ if } n=0 \\
    \exp\Biggl(\sum_{i=1}^n x_i\cdot t\Biggr)& \text{ if } n\geq1
\end{cases}\]
for $\ux=(x_i)_{i=1}^\infty\in X_\be$.
As a generalization of \cite[Proposition 4.2]{Su2}, we give the form of an eigenfunctional corresponding to an isolated eigenvalue of $\cL_t$.

\begin{theorem}\label{Main A}
    Let $\la\in\C$ be an isolated eigenvalue of $\cL_t$ with $|\la|>r_t$. For every non-zero eigenfunctional $\nu_\la$ of $\la$ we have
    \[\nu_\la(\chi_{[\um,\ua]})=\nu_\la(\chi_{[\um,\ud]})\cdot\sum_{n=1}^\infty\frac{a_n H_{n-1}(t,\ua)}{\la^n}\]
    for $\ua=\Ua\in X_\be$. In addition, $\nu_\la(\chi_{[\um,\ud]})\neq0$. 
\end{theorem}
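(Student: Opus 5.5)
Apply $\nu_\la$ to the identity of Lemma \ref{key lemma} and iterate. Since $\nu_\la$ is an eigenfunctional, $\nu_\la(\cL_t f)=\la\nu_\la(f)$ for all $f\in\cF$. Applied to $f=\chi_{[\um,\ua]}$, Lemma \ref{key lemma} gives
\[
\la\,\nu_\la(\chi_{[\um,\ua]})=a_1\nu_\la(\chi_{[\um,\ud]})+l(a_1)\,\nu_\la(\chi_{[\um,\sigma(\ua)]}).
\]
Note that $l(a_1)=e^{a_1t}$, so $l(a_1)l(a_2)\cdots l(a_n)=H_n(t,\ua)$. Dividing by $\la$ and applying the same identity to $\sigma(\ua),\sigma^2(\ua),\dots$ (all in $X_\be$), induction on $N$ gives
\[
\nu_\la(\chi_{[\um,\ua]})=\nu_\la(\chi_{[\um,\ud]})\sum_{n=1}^{N}\frac{a_nH_{n-1}(t,\ua)}{\la^n}+\frac{H_N(t,\ua)}{\la^N}\,\nu_\la(\chi_{[\um,\sigma^N(\ua)]}).
\]

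The main step is to show that the remainder tends to $0$ as $N\to\infty$, and that the series converges absolutely. Since $\nu_\la\in\cF^*$ and $\|\chi_{[\um,\ub]}\|_\cB\le 1+2=3$ for every $\ub$, we have $|\nu_\la(\chi_{[\um,\sigma^N(\ua)]})|\le 3\|\nu_\la\|$. Moreover $H_N(t,\ua)=e^{S_N\phi_t(\ua)}\le|e^{S_N\phi_t}|_\infty$. By the definition of $r_t$ and the hypothesis $|\la|>r_t$, pick $\rho$ with $r_t<\rho<|\la|$; then $|e^{S_N\phi_t}|_\infty\le C\rho^N$ for all $N$. Hence the remainder is $O((\rho/|\la|)^N)\to0$. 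The same bound $H_{n-1}(t,\ua)\le C\rho^{n-1}$ gives absolute convergence of $\sum a_nH_{n-1}(t,\ua)/\la^n$. Letting $N\to\infty$ yields the formula.

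For the nonvanishing, suppose $\nu_\la(\chi_{[\um,\ud]})=0$. The formula then gives $\nu_\la(\chi_{[\um,\ua]})=0$ for every $\ua\in X_\be$. Every interval in $X_\be$ differs from a difference of two such "initial intervals" only by endpoints. A single point (or a difference of two closed initial segments that have the same set up to one point) is in $\cN$, so it vanishes in $\cB$; more simply, $[\ua,\ub]$ and $(\ua,\ub]$ agree in $\cB$. Thus $\chi_I$ is an $\cN$-modification of $\chi_{[\um,\ub]}-\chi_{[\um,\ua]}$ or of $\chi_{[\um,\ub]}$, and so $\nu_\la$ vanishes on every $\chi_I$ and hence on $\cU$. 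By continuity and density of $\cU$ in $\cF$, $\nu_\la=0$, contradicting the assumption that $\nu_\la$ is non-zero.

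The only delicate point is the endpoint bookkeeping in this identification of intervals in the quotient $\cB$, which is handled by the observation that single points lie in $\cN$. The convergence step is where the hypothesis $|\la|>r_t$ is used.
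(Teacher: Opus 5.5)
Your proposal is correct and follows the paper's proof essentially step for step. You iterate Lemma \ref{key lemma} under $\nu_\la$, kill the remainder $\frac{H_N(t,\ua)}{\la^N}\nu_\la(\chi_{[\um,\sigma^N(\ua)]})$ via a uniform bound on $|\nu_\la|$ over initial-interval indicators and $|e^{S_N\phi_t}|_\infty/|\la|^N\to0$, and obtain $\nu_\la(\chi_{[\um,\ud]})\neq0$ from density of the span of the $\chi_{[\um,\ua]}$ in $\cF$. Your explicit $\rho\in(r_t,|\la|)$ and the endpoint bookkeeping via $\cN$ only fill in details the paper leaves implicit (your constant $3$ could be $2$, as in the paper, which is immaterial).
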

\begin{proof}
    Let $\ua=\Ua\in X_\be$. By Lemma \ref{key lemma} and the equation $\cL_t^*\nu_\la=\la\nu_\la$, we have

\[\begin{split}
    \nu_\la(\chi_{[\um,\ua]})
    &=\frac{\nu_\la(\cL_t\chi_{[\um,\ua]})}{\la} \\
    &=\frac{a_1}{\la}\nu_\la(\chi_{[\um,\ud]})+\frac{l(a_1)}{\la}\nu_\la(\chi_{[\um,\sigma(\ua)]}) \\
    &=\frac{a_1}{\la}\nu_\la(\chi_{[\um,\ud]})
    +\frac{a_2l(a_1)}{\la^2}\nu_\la(\chi_{[\um,\ud]})
    +\frac{l(a_1)l(a_2)}{\la^2}\nu_\la(\chi_{[\um,\sigma^2(\ua)]}) \\
    &=\dots \\
    &=\sum_{n=1}^N\frac{a_n H_{n-1}(t, \ua)}{\la^n}\nu_\la(\chi_{[\um,\ud]})+\frac{H_N(t,\ua)}{\la^N}\nu_\la(\chi_{[\um,\sigma^N(\ua)]})
\end{split}\]
for every positive integer $N$. Note that $|\nu_\la(\chi_{[0,\sigma^N(\ua)]})|\leq 2||\nu_\la||$ for any $N$, where $||\nu_\la||$ denotes the norm of $\nu_\la$ on the dual Banach space $\cF^*$. Since $H_{N}(t,\ua)/|\la^N|\leq |e^{S_N\phi_t}|_\infty/|\la|^N\to0$
as $N\to\infty$, we get the desired formula by taking $N\to\infty $ in the right side of the above equation. 
We note that $\cF$ is the closure of the set of all functions given by a finite linear sum of the indicator functions of the form $\chi_{[0,\ua]}$, where $\ua\in X_\be$. If $\nu_\la(\chi_{[0,\ud]})=0$ we have $\nu_\la(\chi_{[0,\ub]})=0$ for any $\ub\in X_\be$ by the above equation, which contradicts that $\nu_\la$ is a non-zero bounded linear functional on $\cF$. This gives the conclusion.  
\end{proof}

By Theorem \ref{Main A}, the eigenfunctional $\nu_t$ corresponding to $\la_t$ can be expressed as a non-atomic probability measure on $X_\be$. 

\begin{proposition}\label{3-4} Let $t\in\R$ and let $\nu_t$ be the eigenfunctional corresponding to $\la_t$. 
    (1) For an isolated eigenvalue $\la\in\C$ of $\cL_t$ with $|\la|>r_t$, we have
    \[\sum_{n=1}^\infty\frac{g_n(x)H_{n-1}(t, (g_m(x))_{m=1}^\infty)}{\la^n}=\sum_{n=1}^\infty\frac{q_n(x)H_{n-1}(t, (q_m(x))_{m=1}^\infty)}{\la^n}\]
    for $x\in(0,1]$.

    (2) We have 
    \[\nu_t([\um, \ua])=\lim_{\ux\to\ua}\nu_t([\um, \ux])\]
    for $\ua\in X_\be$.
    
    (3) There is a non-atomic measure $m_t$ on $X_\be$ such that 
    \[m_t([\um, \ua])=\nu_t(\chi_{[\um,\ua]})\]
    for $\ua\in X_\be$. 
\end{proposition}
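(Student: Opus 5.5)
For part (1), fix a simple $x\in(0,1]$; if $x$ is non-simple the two expansions coincide and there is nothing to prove. Write $S=S(x)$. The greedy and quasi-greedy sequences agree for $n<S$. At position $S$ the greedy digit is $g_S(x)=1$ and $g_n(x)=0$ for $n>S$, whereas $q_S(x)=0$ and $q_{S+i}(x)=q_i$. For $x=1$ the same picture holds with $\ud$ periodic, and we can either treat it as the case $x=1/\be$ shifted or argue directly. Put $w=\sum_{n<S}g_n(x)$. The greedy sum then equals
\[\sum_{n<S}\frac{g_nH_{n-1}}{\la^n}+\frac{e^{tw}}{\la^S},\]
since the digits beyond $S$ vanish. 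The quasi-greedy sum has the same first part. The term at $n=S$ is zero, and the tail is
\[\frac{e^{tw}}{\la^{S}}\sum_{i\ge1}\frac{q_iH_{i-1}(t,\ud)}{\la^i}.\]
It therefore suffices to show that $\sum_{i\ge1}q_iH_{i-1}(t,\ud)/\la^i=1$. This is exactly Theorem \ref{Main A} applied with $\ua=\ud$: it gives $\nu_\la(\chi_{[\um,\ud]})=\nu_\la(\chi_{[\um,\ud]})\sum_n q_nH_{n-1}(t,\ud)/\la^n$, and $\nu_\la(\chi_{[\um,\ud]})\neq0$. For $x=1$ itself, the greedy expansion $g_1(1)\dots g_S(1)0^\infty$ is handled by the same computation, with the tail identity used once.

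For (2), apply Theorem \ref{Main A} with $\la=\la_t$ and the normalization $\nu_t(\chi_{[\um,\ud]})=\nu_t(1)=1$. This gives $F(\ua):=\nu_t([\um,\ua])=\sum_n a_nH_{n-1}(t,\ua)/\la_t^n$. The series converges uniformly in $\ua$, because its terms are bounded by $|e^{S_{n-1}\phi_t}|_\infty/\la_t^n$, which decays geometrically since $r_t<\la_t$ (Proposition \ref{hyperbolic potential}). Each partial sum depends only on finitely many coordinates and is therefore continuous in the product topology. Hence $F$ is continuous on $X_\be$, and (2) follows.

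For (3), note that $F$ is nondecreasing in the lexicographic order by positivity of $\nu_t$, that $F(\ud)=1$, and that $F$ is continuous by (2). Define $m_t$ on intervals $(\ua,\ub]$ by $F(\ub)-F(\ua)$, and on $[\um,\ua]$ by $F(\ua)$. The interval sets form a semiring generating the Borel $\sigma$-algebra, since the order topology equals the product topology on a compact totally ordered space. We then extend by Carathéodory, or equivalently take the Lebesgue–Stieltjes measure on $X_\be$ through the order. The main obstacle is the extension step, namely countable additivity on the semiring. We get it from compactness of $X_\be$ together with continuity of $F$, via the standard argument for Stieltjes measures: approximate from inside by closed intervals and from outside by open ones.

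Finally, $m_t$ is non-atomic. For a point $\ua$, $m_t(\{\ua\})$ is the jump $F(\ua)-\lim_{\ux\nearrow\ua}F(\ux)$, which is zero by the continuity in (2). When $\ua$ has no predecessor approximants, $\{\ua\}$ is isolated from the left in the order and the measure still vanishes by the same continuity. The point $\ua=\um$ needs separate care: we must check that $F(\um)=0$, which is immediate from the formula since all digits of $\um$ vanish.
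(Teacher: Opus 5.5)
Your (1) follows the paper's argument. Your (2) is correct by a different and cleaner route: uniform convergence of the locally constant partial sums gives continuity of $F(\ua)=\nu_t([\um,\ua])$ in the product topology. The paper instead takes one-sided limits through $\pi_\be$ and already invokes (1) at that point.

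The gap is in the non-atomicity claim of (3). Take any $\ua\neq\um$ ending in $\overline{0}^\infty$, i.e.\ the greedy expansion $(g_n(x))_{n=1}^\infty$ of a simple $x$ with $S=S(x)$. Such a point is left-isolated but is not the minimum of $X_\be$. By Proposition \ref{Parry1}, every $\ux\in X_\be$ with $\ux\prec(g_n(x))_{n=1}^\infty$ satisfies $\ux\preceq(q_n(x))_{n=1}^\infty=g_1(x)\dots g_{S-1}(x)\,0\,\ud$, so the quasi-greedy expansion is its immediate predecessor. Hence $\{(g_n(x))_{n=1}^\infty\}=\bigl((q_n(x))_{n=1}^\infty,(g_n(x))_{n=1}^\infty\bigr]$, and your own definition gives
\[m_t(\{(g_n(x))_{n=1}^\infty\})=F((g_n(x))_{n=1}^\infty)-F((q_n(x))_{n=1}^\infty).\]
These two points are at positive distance in the product topology, so continuity of $F$ says nothing about this difference. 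Your sentence ``the measure still vanishes by the same continuity'' is therefore false. For instance, $\chi_{C_1}$ is continuous and nondecreasing on $X_\be$, yet its Stieltjes measure is the Dirac mass at $1\um$, whose immediate predecessor is $0\ud$.

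What kills these atoms is precisely item (1). Apply it with $\la=\la_t$ (legitimate since $\la_t>r_t$) and $\nu_t(\chi_{[\um,\ud]})=\nu_t(1)=1$. The two sides of (1) are then exactly $F((g_n(x))_{n=1}^\infty)$ and $F((q_n(x))_{n=1}^\infty)$. You proved (1) but never used it. In the paper, (1) is exactly what makes $\widehat F_t(x)=\nu_t([\um,(g_n(x))_{n=1}^\infty])$ continuous on $[0,1]$. That continuity is what makes the Stieltjes measure on $[0,1]$, and hence its pull-back $m_t$, atomless.

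With that step inserted, your direct construction on $X_\be$ goes through. Your countable-additivity sketch should also note that $(\ua,\ub]$ is already closed when $\ua$ has an immediate successor, which is routine.
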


\begin{proof}
    (1) Assume that $x\in(0,1]$ is non-simple. Then $(g_n(x))_{n=1}^\infty=(q_n(x))_{n=1}^\infty$, which gives the desired equality. Assume that $x\in(0,1]$ is simple, i.e., there is a positive integer $n_0$ such that $\tau_{\be}^{n_0-1}(x)=1/\be$. Then we know that $g_{n_0}(x)=1$ and $g_{n_0+i}(x)=0$ for $i\geq1$. By setting $\ua=\ud$ in Theorem \ref{Main A}, we have
   \[\nu_\la(\chi_{[\um,\ud]})=\nu_\la(\chi_{[\um,\ud]})\cdot
   \sum_{n=1}^\infty\frac{q_n(1) H_{n-1}(t,(q_m(1))_{m=1}^\infty)}{\la^n},\]
which yields 
\[\sum_{n=1}^\infty\frac{q_n(1) H_{n-1}(t,(q_m(1))_{m=1}^\infty)}{\la^n}=1.\]
By the definition of the quasi-greedy expansion $(q_m(x))_{m=1}^\infty$ of $x$, we obtain 
\[
\begin{split}
    &\sum_{n=1}^\infty\frac{q_n(x) H_{n-1}(t,(q_m(x))_{m=1}^\infty)}{\la^n} \\
    &=\sum_{n=1}^{n_0}\frac{g_n(x) H_{n-1}(t,(q_m(x))_{m=1}^\infty)}{\la^n}
    -\frac{H_{n_0-1}(t, (q_m(x))_{m=1}^\infty)}{\la^{n_0}} \\
    &+\sum_{n=n_0+1}^\infty\frac{q_n(x) H_{n-1}(t,(q_m(x))_{m=1}^\infty)}{\la^n}    \\
    &=\sum_{n=1}^{n_0}\frac{g_n(x) H_{n-1}(t,(g_m(x))_{m=1}^\infty)}{\la^n}
    -\frac{H_{n_0-1}(t, (q_m(x))_{m=1}^\infty)}{\la^{n_0}} \\
    &+\frac{H_{n_0-1}(t,(q_m(x))_{m=1}^\infty)}{\la^{n_0}}\sum_{n=1}^\infty\frac{q_n(1) H_{n-1}(t,(q_m(1))_{m=1}^\infty)}{\la^n}    \\
    &=\sum_{n=1}^{n_0}\frac{g_n(x) H_{n-1}(t,(g_m(x))_{m=1}^\infty)}{\la^n} \\
    &-\frac{H_{n_0-1}(t,(q_m(x))_{m=1}^\infty)}{\la^{n_0}}\Biggl(1-\sum_{n=1}^\infty\frac{q_n(1) H_{n-1}(t,(q_m(1))_{m=1}^\infty)}{\la^n}\Biggr) \\
    &=\sum_{n=1}^{n_0}\frac{g_n(x) H_{n-1}(t,(g_m(x))_{m=1}^\infty)}{\la^n},    
    \end{split}
\]
which gives the conclusion. 

(2) For $(a_i)_{i=1}^\infty\in X_\be$ set $\pi(\ua)=\sum_{n=1}^\infty a_n/\be^n$. By the definition of the quasi-greedy expansion of $x$ and that of the greedy expansion, we obtain 
\[\lim_{\ux\nearrow\ua}\nu_t([\um,\ux])=\sum_{n=1}^\infty\frac{q_n(\pi(\ua)) H_{n-1}(t,(q_m(\pi(\ua)))_{m=1}^\infty)}{\la^n}\]
and 
\[\lim_{\ux\searrow\ua}\nu_t([\um,\ux])=\sum_{n=1}^\infty\frac{g_n(\pi(\ua)) H_{n-1}(t,(g_m(\pi(\ua)))_{m=1}^\infty)}{\la^n},\]
\[\]
which gives the conclusion by Item (1). 

(3) For $x\in[0,1]$ we define 
\[\widehat m_t([0,x])=\nu_t([\um, (g_n(x))_{n=1}^\infty]).\]
Since the map $x\mapsto(g_n(x))_{n=1}^\infty$ is strictly increasing, together with the fact that $\nu_t(f)\leq \nu_t(g)$ for $f,g\in\cF$ with $f\leq g$, we have 
\[\widehat m_t([0,x])=\nu_t([\um,(g_n(x))_{n=1}^\infty])\leq \nu_t([\um,(g_n(y))_{n=1}^\infty])=\widehat m_t([0,y])\]
for $x,y\in[0,1]$ with $x\leq y$. Set $\widehat F_t(x)=\widehat m_t([0,x])$ for $x\in[0,1]$. Since $\widehat F_t(x)$ is continuous on $[0,1]$ by Item (2), together with the fact that $\widehat F_t(0)=0$ and $\widehat F_t(1)=1$, we have that it is the distribution function of some non-atomic probability measure $\widehat m_t'$ on $[0,1]$. By setting $m_t=\widehat m_t'\circ\pi$, we have 
\[\begin{split}
    m_t([\um, \ua])&=(\widehat m_t'\circ\pi)([\um,\ua])=\nu_t([\um,(g_n(\pi(\ua))_{n=1}^\infty]) \\
    &=\sum_{n=1}^\infty\frac{g_n(\pi(\ua))H_{n-1}(t,(g_m(\pi(\ua))_{m=1}^\infty)}{\la^n}\\
    &=\lim_{\ux\searrow\ua}\nu_t([\um,\ux])=\nu_t([\um,\ua]),
\end{split}\]
which gives the conclusion. 
\end{proof}

For $t\in\R$ we define the formal power series $\phi_t(z)$ by 
\begin{equation}\label{power series}
\phi_t(z)=\sum_{n=1}^\infty q_n(1) H_{n-1}(t,\ud)z^n,
\end{equation}
where $\ud=((q_n(1))_{n=1}^\infty$.
In the following, we show that the convergence radius of 
$\phi_t(z)$ is greater than or equal to $r_t^{-1}$ and an isolated eigenvalue of $\cL_t$ is the inverse of a zero of $1-\phi_t(z)$, which enables us to investigate isolated eigenvalues of $\cL_t$ via zeros of $1-\phi_t(z)$.

\begin{proposition}\label{phi_t}
    For $t\in\R$ let $\phi_t(z)$ be the power series as defined above. Then we have the following:

    (1) The convergence radius of $\phi_t(z)$ is greater than or equal to $r_t^{-1}$.

    (2) $1-\phi_t(z)$ has a unique positive zero $\eta_t$. 

    (3) If $\la$ is an isolated eigenvalue of $\cL_t$ with $|\la|>r_t$, then $\la^{-1}$ is a zero of $1-\phi_t(z)$. In particular, $\la_t^{-1}$ is a zero of $1-\phi_t(z)$ and $\la_t^{}=\eta_t^{-1}$.

    (4) Any zero $\eta$ of $1-\phi_t(z)$ except $\la_t^{-1}$ satisfies $\la_t^{-1}<|\eta|$.
    
\end{proposition}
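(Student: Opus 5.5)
Items (1), (3) and (2) all follow from the structure of $\ud$ and Theorem \ref{Main A}; item (4) is the only delicate point.

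\textbf{Item (1).} Every coefficient is bounded by a supremum of Birkhoff sums:
\[|q_n(1)H_{n-1}(t,\ud)|\leq H_{n-1}(t,\ud)=e^{S_{n-1}\phi_t(\ud)}\leq |e^{S_{n-1}\phi_t}|_\infty.\]
By Cauchy--Hadamard the radius of convergence is at least $\bigl(\limsup_n |e^{S_n\phi_t}|_\infty^{1/n}\bigr)^{-1}=r_t^{-1}$. If $r_t=0$ the series is entire, and the bound still reads correctly.

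\textbf{Item (3).} Let $\la$ be an isolated eigenvalue with $|\la|>r_t$. Since $\cL_t^*$ and $\cL_t$ have the same spectrum, $\la$ is also an eigenvalue of $\cL_t^*$, so a nonzero eigenfunctional $\nu_\la$ exists. Apply Theorem \ref{Main A} with $\ua=\ud$:
\[\nu_\la(\chi_{[\um,\ud]})=\nu_\la(\chi_{[\um,\ud]})\,\phi_t(\la^{-1}).\]
Theorem \ref{Main A} also gives $\nu_\la(\chi_{[\um,\ud]})\neq0$, so $\phi_t(\la^{-1})=1$. The series converges at $\la^{-1}$ because $|\la^{-1}|<r_t^{-1}$, by item (1). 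Proposition \ref{hyperbolic potential} gives $\la_t>r_t$, and $\la_t$ is an eigenvalue (Theorem \ref{quasi}), so $\la_t^{-1}$ is a zero of $1-\phi_t$.

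\textbf{Item (2).} The coefficients of $\phi_t$ are nonnegative, and $q_1=1$ since $\be>1$. Hence $\phi_t$ is continuous and strictly increasing on $[0,R)$, where $R\ge r_t^{-1}$ is the radius of convergence, and $\phi_t(0)=0$. So $1-\phi_t$ has at most one positive zero on $[0,R)$. Existence is already given by (3), since $\la_t^{-1}\in(0,r_t^{-1})$ is such a zero. This yields $\eta_t=\la_t^{-1}$.

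\textbf{Item (4).} Let $\eta\neq\la_t^{-1}$ be a zero with $|\eta|$ in the domain of convergence, and suppose $|\eta|\le\la_t^{-1}$. By the triangle inequality,
\[1=|\phi_t(\eta)|\le\phi_t(|\eta|)\le\phi_t(\la_t^{-1})=1.\]
Both inequalities must be equalities. The second forces $|\eta|=\la_t^{-1}$ by strict monotonicity. Equality in the first forces all nonzero terms $q_nH_{n-1}\eta^n$ to share a common argument. Since $q_1=1$, the term $\eta^1$ is nonzero, so $\eta^n$ is a positive multiple of $\eta$ whenever $q_n=1$.

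\textbf{Main obstacle.} The key step is showing that $\eta$ must then be positive. Write $\eta=|\eta|e^{i\theta}$. The condition says $e^{i(n-1)\theta}=1$ for every $n$ with $q_n=1$, so $\theta\cdot\gcd\{n-1: q_n=1\}\in2\pi\mathbb Z$. The plan is to show this gcd is $1$, which needs two consecutive indices $n$, $n+1$ with $q_n=1$ (consecutive $1$s, so some difference $n-1$ and $n$ both occur), or some other coprimality argument.

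This is exactly where it can fail. For instance, if $\be$ satisfies $\be^{2}=\be\cdot 1+\dots$ with $\ud=\overline{10}^\infty$ (the golden ratio), then $q_n=1$ only for odd $n$, and $\eta=-\la_t^{-1}$ might be a zero. For the golden ratio, $\phi_t(z)=\sum_{k\ge0}e^{kt}z^{2k+1}=z/(1-e^tz^2)$, which is odd. Then $-\la_t^{-1}$ gives $\phi_t=-1\neq1$, so it is not a zero.

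In general, then, I would argue with the sign structure of the terms rather than the gcd. The $n=1$ term is $\eta$. For the sum to equal $1>0$, the common argument of all terms must be $0$, which gives $\eta>0$ and hence $\eta=\la_t^{-1}$, a contradiction. This resolves the step cleanly, since the first term alone pins down the common argument.
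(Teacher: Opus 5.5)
Your proof is correct. For (1)--(3) it is the paper's argument: Cauchy--Hadamard with $H_{n-1}(t,\ud)\le|e^{S_{n-1}\phi_t}|_\infty$, Theorem~\ref{Main A} evaluated at $\ua=\ud$, and strict monotonicity of $\phi_t$ on the positive axis. In (2), the paper's existence claim is only really justified through (3), and you make that step explicit.

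The one real difference is in (4). The paper asserts the stronger statement $|\phi_t(z)|<1$ for all $|z|\le\la_t^{-1}$ with $z\neq\la_t^{-1}$. That claim needs an aperiodicity (gcd) condition on $\{n:\ q_n(1)=1\}$ and is false in general. For the golden ratio, $\phi_t(z)=z/(1-e^tz^2)$ is odd, so $|\phi_t(-\la_t^{-1})|=1$; the same happens for every $\ud=\overline{1\overline{0}^{N}}^\infty$. Your equality-case argument avoids this because it only needs $\phi_t(\eta)\neq1$. All terms must share one argument, the total is $1>0$, and the $n=1$ term is $\eta$ itself since $q_1(1)=1$. Hence $\eta>0$, and uniqueness of the positive zero gives $\eta=\la_t^{-1}$ for every $\be$. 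Your version of (4) is therefore the correct one, and the gcd detour in your final paragraph can simply be deleted.

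A minor point in (3): equality of the spectra of $\cL_t$ and $\cL_t^*$ does not by itself put $\la$ in the point spectrum of $\cL_t^*$. The right justification is that an isolated eigenvalue of finite multiplicity makes $\la I-\cL_t$ Fredholm of index zero, so $\ker(\la I-\cL_t^*)\neq\{0\}$.
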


\begin{proof}
(1) The direct calculation:
\[\limsup_{n\to\infty}\sqrt[n]{|q_n(1) H_{n-1}(t,\ud)|}=\limsup_{n\to\infty}\sqrt[n]{\exp\Biggl(\sum_{i=1}^{n-1}q_i(1)\cdot t\Biggr)}\leq \limsup_{n\to\infty}\sqrt[n]{|e^{S_n\phi_t}|_\infty}=r_t\]
gives the conclusion.

(2) Note that each coefficient $q_n(1) H_{n-1}(t,\ud)$ of $\phi_t$ is non-negative for $n\geq0$ and there are infinitely many $m$ such that $q_m(1) H_{m-1}(t,\ud)>0$ by the definition of $(q_n(1))_{n=1}^\infty$. Since $\phi_t(0)=0$
there is a unique positive number $\eta_t$ such that $\phi_t(\eta_t)=1$, which yields the conclusion. 

(3) Let $\la$ be an isolated eigenvalue of $\cL_t$ with $|\la|>r_t$. By Theorem \ref{Main A}, for an eigenfunctional $\nu$ of $\la$ we have
\[\nu_\la(\chi_{[\um,\ua]})=\nu_\la(\chi_{[\um,\ud]})\cdot\sum_{n=1}^\infty\frac{a_n H_{n-1}(t,\ua)}{\la^n}\]
for $\ua\in X_\be$. Setting $\ua=\ud$
yields 
\[\nu_\la(\chi_{[\um,\ud]})
\Biggl(1-\sum_{n=1}^\infty\frac{q_n(1) H_{n-1}(t,\ud)}{\la^n}\Biggr)=0.\]
Since $\nu(\chi_{[\um,\ud]})\neq0$, we obtain $1-\phi_t(\la^{-1})=0$. In particular, $\la_t^{-1}$ is a positive zero of $1-\phi_t(z)$ since $\la_t>r_t$ by Proposition \ref{hyperbolic potential}. From Item (2), $\la_t^{-1}$ is equal to $\eta_t$, which ends the proof. 

(4) Since any coefficient $q_n(1) H_{n-1}(t,\ud)$ of $\phi_t(z)$ is non-negative for $n\geq0$ and $\phi_t(r)$ is strictly increasing on $r\in[0,r_t^{-1})$, we have that $|\phi_t(z)|<1$ for $z\in\C$ with $|z|\leq \la^{-1}_t$ and $z\neq \la_t^{-1}$ (see e.g., \cite{Er-Fe-Po}). Together with Item (3), we obtain the conclusion. 
\end{proof}

Now we show that a complex number $\la\in\C$ satisfying $|\la|>r_t$  and $1-\phi_t(\la^{-1})=0$
is actually an isolated eigenvalue of $\cL_t$. This can be done by constructing an eigenfunction of $\la$ explicitly, which is a partial generalization of \cite[Theorem 3.2]{Su1}.

\begin{theorem}\label{Main B}
    Let $t\in\R$ and let $\la^{-1}\in\C$ be a zero of $1-\phi_t(z)$ with $\la_t^{-1}\leq |\la^{-1}|<r_t^{-1}$. Define the function $g_\la$ by
    \[g_\la=\sum_{n=0}^\infty\frac{H_n(t,\ud)}{\la^n}\chi_{[\um,\sigma^n(\ud)]}.\]
Then $g_\la$ is a non-zero function in $\cF$ satisfying $\cL_t g_\la=\la g_t$.
In particular, $\la$ is an eigenvalue of $\cL_t$.
\end{theorem}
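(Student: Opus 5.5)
The plan is to check three things in turn: that the series defining $g_\la$ converges in $\cF$, that $g_\la\neq0$, and that $\cL_t g_\la=\la g_\la$ (the statement's ``$\la g_t$'' is read as $\la g_\la$). The eigenvalue identity will be obtained by applying Lemma \ref{key lemma} term by term. That it is an eigenvalue then follows at once.

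\emph{Convergence.} Each summand $\chi_{[\um,\sigma^n(\ud)]}$ lies in $\cU$. Its $\cB$-norm is at most $3$: the sup norm is $1$ and the variation is at most $2$. The coefficients satisfy
\[
|H_n(t,\ud)\la^{-n}|\le |e^{S_n\phi_t}|_\infty|\la|^{-n}.
\]
Since $|\la|>r_t$, the definition of $r_t$ as a $\limsup$ makes these bounds decay geometrically, so the series converges absolutely in $\cB$ and $g_\la\in\cF$. Because $\cL_t$ is bounded on $\cF$, we may apply it termwise.

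\emph{Eigen-equation.} Lemma \ref{key lemma} with $\ua=\sigma^n(\ud)$, whose first digit is $q_{n+1}$, gives
\[
\cL_t\chi_{[\um,\sigma^n(\ud)]}=q_{n+1}\chi_{[\um,\ud]}+l(q_{n+1})\chi_{[\um,\sigma^{n+1}(\ud)]}.
\]
We also use $H_n(t,\ud)\,l(q_{n+1})=H_{n+1}(t,\ud)$. Then
\[
\cL_t g_\la=\Bigl(\sum_{n\ge0}\frac{q_{n+1}H_n(t,\ud)}{\la^n}\Bigr)\chi_{[\um,\ud]}+\sum_{n\ge0}\frac{H_{n+1}(t,\ud)}{\la^n}\chi_{[\um,\sigma^{n+1}(\ud)]}.
\]
The first coefficient equals $\la\,\phi_t(\la^{-1})=\la$, because $\la^{-1}$ is a zero of $1-\phi_t$. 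The second sum equals $\la\bigl(g_\la-\chi_{[\um,\ud]}\bigr)$, since the $n=0$ term of $g_\la$ is $\chi_{[\um,\ud]}$. Adding the two gives $\cL_tg_\la=\la g_\la$.

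\emph{Non-vanishing.} This is the one genuinely delicate step, since $g_\la$ is a quotient class in $\cB$ and we must show it is not equal to $0$ off a countable set. I would evaluate near $\ud$. For $\ux$ in a small interval just below $\ud$, or equivalently for $\pi_\be$-values in $(1-\epsilon,1)$, which is uncountable, $\chi_{[\um,\sigma^n(\ud)]}(\ux)=1$ exactly when $\ux\preceq\sigma^n(\ud)$. That inequality fails whenever $\sigma^n(\ud)\prec\ud$ is strict and $\ux$ is close enough to $\ud$. The difficulty is that the number of such $n$ may be infinite, so I would argue via left limits instead.

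The function $g_\la$ is a uniformly convergent sum of monotone step functions. Its jump at the point $\sigma^n(\ud)$, taken as a right limit minus the value just above, is a sum of the coefficients $H_k(t,\ud)\la^{-k}$ over all $k$ with $\sigma^k(\ud)=\sigma^n(\ud)$. Jumps of a class in $\cB$ are well defined, because changing the function on a countable set does not alter one-sided limits. So it suffices to find one $n$ for which this jump is nonzero.

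Take $n=0$, the point $\ud$ itself. If $\ud$ is not periodic, the jump there is just the coefficient $1\neq0$ (the value at $\ud$ is $\la$-weighted and the right side is empty, so I would use the jump from the left instead). If $\ud$ is periodic with period $p$, the jump is $\sum_j (H_p(t,\ud)\la^{-p})^j$, which is nonzero: it is a convergent geometric series whose ratio has modulus less than $1$.

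I expect this non-vanishing argument, handling the left limit at $\ud$ and the possibly periodic quasi-greedy expansion, to be the main obstacle. The convergence and the eigen-identity are routine.
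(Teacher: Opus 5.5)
Your proposal is correct. The convergence bound and the termwise use of Lemma~\ref{key lemma} are exactly the paper's argument, including $H_n(t,\ud)\,l(q_{n+1})=H_{n+1}(t,\ud)$ and the coefficient of $\chi_{[\um,\ud]}$ collapsing to $\la\phi_t(\la^{-1})=\la$. Only the non-vanishing step takes a different route.

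\textbf{The paper's non-vanishing step.} The paper evaluates at the single point $1\overline{0}^\infty$. Since $\chi_{[\um,\sigma^n(\ud)]}(1\overline{0}^\infty)=q_{n+1}(1)$, it gets $g_\la(1\overline{0}^\infty)=\la\phi_t(\la^{-1})=\la\neq0$. This is a one-line computation that uses the zero condition. As a point evaluation, however, it does not by itself show that the class in $\cB=BV/\cN$ is nonzero. It is easily completed: no $\sigma^n(\ud)$ equals $1\overline{0}^\infty$, so each term is constant on a right neighbourhood of $1\overline{0}^\infty$, and $g_\la(\ux)\to\la$ as $\ux\searrow 1\overline{0}^\infty$ through an uncountable set.

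\textbf{Your non-vanishing step.} You take the left limit at $\ud$ instead. By dominated convergence it equals $\sum_{k:\,\sigma^k(\ud)=\ud}H_k(t,\ud)\la^{-k}$, which is $1$ or $(1-H_p(t,\ud)\la^{-p})^{-1}$. This addresses the quotient issue directly and does not need $\phi_t(\la^{-1})=1$ at all. The price is the periodic case split, plus the fact that every left neighbourhood of $\ud$ in $X_\be$ is uncountable. You only sketch that fact via $\pi_\be$; it holds because such a neighbourhood contains $q_1\dots q_k0X_\be$ for every large $k$ with $q_{k+1}=1$.

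\textbf{Phrasing.} State the conclusion as ``the left limit of $g_\la$ at $\ud$ is nonzero,'' not as a jump measured against the value at $\ud$. Point values are not defined on classes in $\cB$. Moreover, $g_\la(\ud)$ actually equals that left limit; it is $g_\la(1\overline{0}^\infty)$, not $g_\la(\ud)$, that equals $\la$.
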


\begin{proof}
Since
$\la_t^{-1}\leq|\la^{-1}|<r_t^{-1}$,
we know that $H_n(t,\ud)/|\la|^n\leq |e^{S_n\phi_t}|_\infty/|\la|^n$ converges to $0$ in exponentially fast as $n\to\infty$. Hence $g_\la$ is well-defined. In addition, since $\chi_{[\um,\sigma^n(\ud)]}(1\overline{0}^\infty)=q_{n+1}(1)$ for $n\geq0$, we have
\[g_\la(1\overline{0}^\infty)=\sum_{n=0}^\infty\frac{H_{n}(t,\ud)}{\la^n}q_{n+1}(1)=\la,\]
which ensures that $g_\la$ is non-zero. Note that
$g_\la^{(N)}:=\displaystyle\sum_{n=0}^N \frac{H_n(t,\ud)}{\la^n}\chi_{[\um,\sigma^n(\ud)]}\in\cF$
for $N\geq1$. Since
\[\begin{split}
&\Biggl|\Biggl|\sum_{n=N+1}^\infty\frac{H_n(t,\ud)}{\la^n}\chi_{[\um,\sigma^n(\ud)]}\Biggr|\Biggr|_{\cB} \\
&\leq\Biggl|\sum_{n=N+1}^\infty\frac{H_n(t,\ud)}{\la^n}\chi_{[\um,\sigma^n(\ud)]}\Biggr|_{\infty}
+\sum_{n=N+1}^\infty\frac{H_n(t,\ud)}{|\la|^n}\var(\chi_{[\um,\sigma^n(\ud)]}) \\
&\leq 3\cdot\sum_{n=N+1}^\infty\frac{H_n(t,\ud)}{|\la|^n}\to0
\end{split}
\]
as $N\to\infty$, we have $g_\la^{(N)}\to g_\la$ as $N\to\infty$ in the sense of the norm $||\cdot||_{\cB}$. Since the space $\cF$ is closed, we obtain $g_\la\in\cF$. 

By Lemma \ref{key lemma} and continuity of the operator $\cL_t$ on $\cF$, we have
\[\begin{split}
\cL_t g_\la&=\cL_t\Biggl(\sum_{n=0}^\infty\frac{H_n(t,\ud)}{\la^n}\chi_{[\um,\sigma^n(\ud)]}\Biggr) \\
&=\sum_{n=0}^\infty\frac{H_n(t,\ud)}{\la^n}\Bigl(q_{n+1}(1)\chi_{[\um,\ud]}+l(q_{n+1}(1))\chi_{[\um,\sigma^{n+1}(\ud)]}\Bigr) \\
&=\sum_{n=0}^\infty\frac{H_n(t,\ud)q_{n+1}(1)}{\la^n}\chi_{[\um,\ud]}
+\sum_{n=0}^\infty\frac{H_{n+1}(t,\ud)}{\la^n} \chi_{[\um,\sigma^{n+1}(\ud)]}\\
&=\la\sum_{n=1}^\infty\frac{H_{n-1}(t,\ud)q_{n}(1)}{\la^n}\chi_{[0,\ud]}
+\la\sum_{n=1}^\infty\frac{H_n(t,\ud)}{\la^n} \chi_{[\um,\sigma^{n}(\ud)]} \\
&=\la\Biggl(\phi_t(\la^{-1})\cdot\chi_{[\um,\ud]}+\sum_{n=1}^\infty\frac{H_n(t,\ud)}{\la^n}\chi_{[\um,\sigma^{n}(\ud)]}\Biggr) \\
&=\la g_\la,
\end{split}
\]
which gives the conclusion.
\end{proof}

By Proposition \ref{phi_t} (3) and Theorem \ref{Main B}, we have the following:

\begin{theorem}\label{determinant}
Let $t\in\R$. For $\la\in\C$ with $|\la|>r_t$, $\la$ is an isolated eigenvalue of $\cL_t$ if and only if $\la^{-1}$ is a zero of $1-\phi_t(z)$. 
\end{theorem}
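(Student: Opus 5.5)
The plan is to obtain the two directions of the equivalence from results already established, with quasi-compactness supplying the word ``isolated''. Throughout, fix $t\in\R$ and $\la\in\C$ with $|\la|>r_t$. Then $|\la^{-1}|<r_t^{-1}$, so by Proposition \ref{phi_t} (1) the point $\la^{-1}$ lies in the open disc of convergence of $\phi_t$. Hence $\phi_t(\la^{-1})$ is a well-defined convergent series, and the condition ``$\la^{-1}$ is a zero of $1-\phi_t(z)$'' is meaningful.

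\emph{Necessity.} Suppose $\la$ is an isolated eigenvalue of $\cL_t$ with $|\la|>r_t$. Then Proposition \ref{phi_t} (3) applies directly and gives $1-\phi_t(\la^{-1})=0$. The underlying mechanism is Theorem \ref{Main A}. Any nonzero eigenfunctional $\nu_\la$ (which exists because $\la$ is an eigenvalue of finite multiplicity, hence also an eigenvalue of $\cL_t^*$) satisfies $\nu_\la(\chi_{[\um,\ud]})\neq0$. Substituting $\ua=\ud$ into the formula of Theorem \ref{Main A} and dividing by $\nu_\la(\chi_{[\um,\ud]})$ then forces $\phi_t(\la^{-1})=1$.

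\emph{Sufficiency.} Suppose $1-\phi_t(\la^{-1})=0$. To invoke Theorem \ref{Main B} I first need $\la_t^{-1}\le|\la^{-1}|<r_t^{-1}$. The upper bound holds by assumption. For the lower bound there are two cases. If $\la^{-1}=\la_t^{-1}$, equality holds. Otherwise Proposition \ref{phi_t} (4) gives $\la_t^{-1}<|\la^{-1}|$. I should check that (4) really covers every zero in the disc $|z|<r_t^{-1}$. Its proof shows that $|\phi_t(z)|<1$ whenever $|z|\le\la_t^{-1}$ and $z\neq\la_t^{-1}$, using the nonnegativity of the coefficients and the fact that infinitely many are positive. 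That argument is exactly what is needed here.

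Theorem \ref{Main B} then produces a nonzero $g_\la\in\cF$ with $\cL_t g_\la=\la g_\la$, so $\la$ is an eigenvalue and in particular a spectral value of $\cL_t$. Since $|\la|>r_t$, Proposition \ref{hyperbolic potential} together with Theorem \ref{quasi} says every spectral value of modulus greater than $r_t$ is an isolated eigenvalue of finite multiplicity. Hence $\la$ is an isolated eigenvalue.

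The only step needing care is the sufficiency direction: establishing the hypothesis $|\la|\le\la_t$ of Theorem \ref{Main B}, which rests on Proposition \ref{phi_t} (4), and then upgrading ``eigenvalue'' to ``isolated eigenvalue'' via quasi-compactness. Both are short once the earlier results are in place.
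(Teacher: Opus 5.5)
Your proposal is correct and is essentially the paper's argument. The paper obtains the theorem in one line from Proposition \ref{phi_t} (3) (necessity) and Theorem \ref{Main B} (sufficiency); you make explicit the two steps it leaves tacit, namely checking the hypothesis $\la_t^{-1}\le|\la^{-1}|$ of Theorem \ref{Main B} via Proposition \ref{phi_t} (4), and upgrading ``eigenvalue'' to ``isolated eigenvalue'' via Proposition \ref{hyperbolic potential} and Theorem \ref{quasi}.

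One minor caveat concerns your side remark, not your proof. The argument behind (4) actually gives $\phi_t(z)\neq1$ for $|z|\le\la_t^{-1}$, $z\neq\la_t^{-1}$: the equality $\phi_t(z)=\phi_t(|z|)$ forces $z>0$ because $q_1(1)=1$. It does not give $|\phi_t(z)|<1$, which fails e.g.\ at $z=-\la_t^{-1}$ when $\ud=\overline{10}^\infty$. Your proof uses only the statement of (4), so this does not affect it.
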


As an application of the above theorem, we obtain the Ruelle-Perron-Frobenius theorem for $\cL_t$ for any $t\in\R$ and exponential decay of correlation functions with some explicit formulae, including the optimal upper bound of the decay rate of correlation functions. 

\begin{theorem}\label{Main C}
    Let $t\in\R$ and let $\cL_t$ be the transfer operator of the potential $t\chi_{C_1}$, where $C_1=\{(x_i)_{i=1}^\infty\in X_\be;\ x_1=1\}$. Then we have the following.

(1) Let $\la\in\C$ be an isolated eigenvalue of $\cL_t$ with $|\la|>r_t$. Then the geometric multiplicity of $\la$ is $1$ and for the corresponding eigenfunctional $\nu$ with $\nu(1)=1$, we have 
\begin{equation}\label{3-8(1)}
    \nu(\chi_{[\um,\ua]})=\sum_{n=1}^\infty\frac{a_n H_{n-1}(t,\ua)}{\la^n}
\end{equation}
for $\ua=(a_i)_{i=1}^\infty\in X_\be$.
Furthermore, any corresponding eigenfunction $h$ is of the form
\[h=C\cdot\sum_{n=0}^\infty\frac{H_n(t,\ud)}{\la^n}\chi_{[\um,\sigma^n(\ud)]},\]
where $C\in\C$.

(2) The positive eigenvalue $\la_t>r_t$ of $\cL_t$ is simple. The corresponding eigenfunctional $\nu_t$ with $\nu_t(1)=1$ is expressed as a non-atomic probability measure on $X_\be$ satisfying (\ref{3-8(1)}) for $\la=\la_t$. In addition, $\mu_t:=h_t\nu_t$ is $\sigma$-invariant probability measure on $X_\be$, where 
\[h_t=\frac{1}{F(t)}\sum_{n=0}^\infty\frac{H_n(t,\ud)}{\la_t^n}\chi_{[\um,\sigma^n(\ud)]}.\]
Here, $F(t)$ is the normalizing constant:
\begin{equation}\label{normalizing}
    F(t)=\sum_{n=1}^\infty\frac{nq_n(1) H_{n-1}(t,\ud)}{\la_t^n}.
\end{equation}

(3) The positive eigenvalue $\la_t$ is a unique leading eigenvalue of $\cL_t$, that is, we have $|\la|<\la_t$ for any other isolated eigenvalue $\la\in\C$ with $|\la|>r_t$. In addition, there is a positive constant $K$ such that 
\[\Biggl|\int_{X_\be}f\cdot g\circ\sigma^n d\mu_t-\int_{X_\be}fd\mu_t\cdot\int_{X_\be}g d\mu_t\Biggl|\leq K||f||_\cB ||g||_{L^1(\mu_t)}\alpha^n\]
for $n\geq1$, $f\in\cF$ and $g\in L^1(\mu_t)$, where $\mu_t=h_t\nu_t$ is a $\sigma$-invariant probability measure on $X_\be$ and 
\[\al=\max\Bigl\{r_t,
    \max\{|\la|; 1-\phi_t(\la^{-1})=0 \text{ with } \la\neq\la_t\} \Bigr\}\in[r_t,\la_t).
\]
In particular, the measurable dynamics $(\sigma,\mu_t)$ is exponentially mixing. 
\end{theorem}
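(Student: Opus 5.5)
We prove the three items in order, using Theorem \ref{Main A}, Theorem \ref{Main B}, Proposition \ref{phi_t} and the spectral decomposition (\ref{decomposition}).

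\textbf{Item (1).} Since $\la$ is an isolated eigenvalue, the dual $\cL_t^*$ also has $\la$ as an eigenvalue, with the same geometric multiplicity. By Theorem \ref{Main A}, every eigenfunctional of $\la$ is determined on all $\chi_{[\um,\ua]}$ by the single number $\nu(\chi_{[\um,\ud]})=\nu(1)$. These indicators span a dense subspace of $\cF$. Hence the eigenspace of $\cL_t^*$ for $\la$ is one-dimensional, and normalizing $\nu(1)=1$ gives (\ref{3-8(1)}). Because $\la$ is isolated with finite multiplicity, the eigenspaces of $\cL_t$ and $\cL_t^*$ for $\la$ have equal dimension (through the finite-rank spectral projection). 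So the eigenspace of $\cL_t$ is one-dimensional. By Proposition \ref{phi_t}(3), $\la^{-1}$ is a zero of $1-\phi_t$, and Theorem \ref{Main B} supplies the nonzero eigenfunction $g_\la$. Therefore every eigenfunction is $C g_\la$.

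\textbf{Item (2).} Geometric multiplicity one is already known, so simplicity means ruling out a Jordan block. Suppose $(\la_t-\cL_t)f=g_{\la_t}$ had a solution $f$. Applying $\nu_t$ gives $0=\nu_t(g_{\la_t})$. We compute $\nu_t(g_{\la_t})$ using (\ref{3-8(1)}) with $\ua=\sigma^n(\ud)$:
\[\nu_t(g_{\la_t})=\sum_{n\ge0}\frac{H_n(t,\ud)}{\la_t^n}\sum_{k\ge1}\frac{q_{n+k}H_{k-1}(t,\sigma^n\ud)}{\la_t^k}.\]
Since $H_n(t,\ud)H_{k-1}(t,\sigma^n\ud)=H_{n+k-1}(t,\ud)$, regrouping by $m=n+k$ gives
\[\nu_t(g_{\la_t})=\sum_{m}\frac{m\,q_m H_{m-1}(t,\ud)}{\la_t^m}=F(t)>0.\]
The rearrangement is justified because all terms are nonnegative. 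This contradiction shows $\la_t$ is simple. Proposition \ref{3-4}(3) gives the non-atomic measure representation of $\nu_t$. The eigenfunction $g_{\la_t}$ is nonnegative, so $h_t=g_{\la_t}/F(t)$ satisfies $\nu_t(h_t)=1$, and $h_t\nu_t$ is a probability measure. Invariance follows from $\int f\circ\sigma\, h_t\,d\nu_t=\la_t^{-1}\nu_t(\cL_t(f\circ\sigma\cdot h_t))=\la_t^{-1}\nu_t(f\cL_t h_t)=\int f h_t\,d\nu_t$. This holds first for $f\in\cF$, and extends to all Borel sets by density of indicators of intervals, which generate the Borel $\sigma$-algebra.

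\textbf{Item (3).} By Theorem \ref{determinant}, eigenvalues with $|\la|>r_t$ correspond exactly to zeros of $1-\phi_t$ in the disc of radius $r_t^{-1}$. Proposition \ref{phi_t}(4) gives $|\la|<\la_t$ for all such $\la\neq\la_t$. Zeros of an analytic function accumulate only at the boundary, so for any $\theta>r_t$ there are finitely many eigenvalues with modulus at least $\theta$. Hence the maximum defining $\al$ is attained, or is $r_t$, and $\al<\la_t$.

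For decay of correlations, take $\theta$ slightly above $\al$, or $\theta=\al$ where possible. The decomposition (\ref{decomposition}) then reads
\[\cL_t^n f=\la_t^n h_t\nu_t(f)+R_n f,\qquad \|R_n\|\le C\theta^n.\]
Normalize via $\widehat{\cL}f=\la_t^{-1}h_t^{-1}\cL_t(h_tf)$, or directly write
\[\int f\,g\circ\sigma^n\,d\mu_t=\la_t^{-n}\int g\,\cL_t^n(fh_t)\,d\nu_t.\]
Subtracting the main term leaves
\[\Bigl|\int g\,\la_t^{-n}R_n(fh_t)\,d\nu_t\Bigr|\le |g|_{L^1(\nu_t)}\,\bigl|\la_t^{-n}R_n(fh_t)\bigr|_\infty.\]
Converting $L^1(\nu_t)$ to $L^1(\mu_t)$ needs $h_t$ bounded below. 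This holds because $h_t\ge F(t)^{-1}\chi_{[\um,\ud]}=F(t)^{-1}$, the $n=0$ term.

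\textbf{Main obstacle.} The hardest point is getting the rate exactly $\al$, which is stated relative to $\la_t$. One needs $\al/\la_t$ as the rate, or else $\al$ must be read as normalized. A further issue is that the non-leading eigenvalues on the circle of radius $\al$ may carry Jordan blocks, which would introduce polynomial factors. My plan is to absorb these by taking $\theta$ slightly larger, or to use Item (1)'s geometric multiplicity one together with simplicity of zeros. I expect to settle for a bound of the form $K\theta^n$ with any $\theta>\al/\la_t$, and to examine whether the paper's constant allows equality.
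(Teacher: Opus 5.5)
Your proposal follows the paper's proof essentially step for step. In (1) you use equality of eigenspace dimensions for $\cL_t$ and $\cL_t^*$ together with Theorem \ref{Main A}. In (2) you rule out a Jordan chain via $\nu_t(g_{\la_t})=F(t)>0$, and you supply the regrouping computation and the bound $h_t\ge F(t)^{-1}$ that the paper leaves implicit. In (3) you combine the spectral decomposition with $\inf h_t>0$.

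Your worry about the rate is justified. The paper's proof also bounds the normalized remainder $\la_t^{-n}\cL_t^n(fh_t)-\nu_t(fh_t)h_t$ by $K'\|f\|_{\cB}\al^n$, whereas the decomposition really gives $O(\theta^n)$ for every $\theta>\al/\la_t$. But $S_n(t\chi_{C_1})(\um)=0$ forces $r_t\ge 1$, hence $\al\ge 1$. So your bound with any $\theta\in(\al/\la_t,1)$ implies the inequality as stated, which, read literally with $\al^n$, is in fact trivial. Your normalized bound is exactly what the exponential mixing conclusion needs, so no equality case has to be examined.
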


\begin{proof}
    (1) Since the dimension of the eigenspace with respect to an isolated eigenvalue of $\cL_t^*$ is equal to that of $\cL_t$ (see e.g., \cite{Ba,Ha}), by Theorem \ref{Main A}, we have that the geometric multiplicity of $\la$ is $1$. The desired explicit formula for the eigenfunctional $\nu$ with $\nu(1)=1$ and that for an eigenfunction $h$ follow from Theorem \ref{Main A} and Theorem \ref{Main B}, respectively.

    (2) We show that the geometric multiplicity of $\la_t$ is equal to its algebraic multiplicity. Since $h_t$ is a positive function on $X_\be$, we know that $\nu_t(h_t)>0$. If there is a non-zero function $f\in\cF$ such that $h_t=(\la_\phi I-\cL_\phi)f$, where $I$ denotes the identity map on $\cF$, we obtain that $\nu_t(h_t)=\la_t\nu_t(f)-\nu_t(\cL_t f)=0$, which gives the contradiction. The fact that $\nu_t$ is represented by a non-atomic probability measure on $X_\be$ is a consequence of Proposition \ref{3-4} (3). The formula for $h_t$ is a consequence of Theorem \ref{Main B}. The fact that $\mu_t=h_t\nu_t$ is $\sigma$-invariant is an immediate consequence of the equations $\cL_th_t=\la_th_t$ and $\cL_t^*\nu_t=\la_t\nu_t$. The formula for $F(t)$ follows from the direct calculation:
    \[\begin{split}
    \nu_t\Biggl(\sum_{n=0}^\infty\frac{H_n(t,\ud)}{\la_t^n}\chi_{[\um,\sigma^n(\ud)]}\Biggr)
    &=\sum_{n=0}^\infty\frac{H_n(t,\ud)}{\la_t^n}\nu_t(\chi_{[\um,\sigma^n(\ud)]}) \\
    &=\sum_{n=0}^\infty\frac{H_n(t,\ud)}{\la^n}\sum_{m=1}^\infty \frac{q_{n+m}(1)H_{m-1}(t,\sigma^n(\ud))}{\la_t^m} \\
    &=\sum_{n=0}^\infty\sum_{m=1}^\infty \frac{q_{n+m}(1)H_{n+m-1}(t,\ud)}{\la_t^{n+m}}\\
    &=\sum_{n=1}^\infty\frac{nq_n(1) H_{n-1}(t,\ud)}{\la_t^n}.
    \end{split}\]

    (3) By Proposition \ref{phi_t} (4) and Theorem \ref{determinant}, for an isolated eigenvalue $\la\in\C$ with $\la\neq\la_t$, we have $\la_t^{-1}<|\la^{-1}|$, which yields $|\la|<\la_t$. By the spectral decomposition (\ref{decomposition}) and the fact that $\la_t$ is a unique leading eigenvalue of $\cL_t$, there is a positive constant $K'$ such that 
    \[\Bigg|\frac{\cL_t^n(fh_t)}{\la_t^n}-\Bigl(\int_{X_\be}fh_t d\nu_t\Bigr)\cdot h_t\Biggr|_{\infty}\leq K'||f||_{\cB}\alpha^n\]
    for any $f\in\cF$ and all $n\geq1$ (see e.g., \cite[Theorem 1.6]{Ba}). This shows that 
    \begin{align*}
        &\Biggl|\int_{X_\be}f g\circ\sigma^n d\mu_t-\int_{X_\be}fd\mu_t\cdot\int_{X_\be}g d\mu_t\Biggl| \\
        &\leq\int_{X_\be}\Bigg|\frac{\cL_t^n(fh_t)}{\la_t^n}-\Bigl(\int_{X_\be}fh_t d\nu_t\Bigr)\cdot h_t\Biggr|_\infty g\ d\nu_t
        \leq K||f||_\cB ||g||_{L^1(\mu_t)}\alpha^n
    \end{align*}
    for any $f\in\cF$ and all $n\geq1$, where $K=K'/\inf{h_t}>0$. This gives the conclusion.
    \end{proof}

\end{section}

\begin{section}{Hausdorff dimension of frequency sets}

In this section, we give an exact formula for the Hausdorff dimension of frequency sets for $1<\be<2$ as an application of the results in the previous section. 

We start with the definitions of $g$-measures and equilibrium states. For a positive function $g\in\cF$ with $\cL_{\log g}1=1$ a Borel probability measure $m$ on $X_\be$ is called a $g$-measure if $\cL_{\log g}^*m=m$, i.e., $\displaystyle{\int_{X_\be}\cL_{\log g}f dm=\int_{X_\be} f dm}$ for any $f\in\cF$. For a function $\phi\in\cF$, a Borel probability measure $\mu$ on $X_\be$ is called an equilibrium state with respect to $\phi$ if it is $\sigma$-invariant and satisfies
\[\hat{P}(\phi):=\sup_{m\in M(X_\be)}\Biggl\{h_m(\sigma)+\int_{X_\be}\phi\ dm\Biggr\}=h_\mu(\sigma)+\int_{X_\be}\phi d\mu,\]
where $M(X_\be)$ denotes the set of all $\sigma$-invariant probability measures on $X_\be$ and $h_m(\sigma)$ denotes the measure-theoretic entropy for $(\sigma,m)$, where $m\in M(X_\be)$. We note that if a function $\phi$ is continuous on $X_\be$ then $\hat{P}(\phi)$ is equal to the topological entropy $P(\phi)$:
\[P(\phi)=\lim_{n\to\infty}\frac{1}{n}\log\sum_{A\in \cC_n}\exp\Biggl(\sup_{\ux\in A}\sum_{i=0}^{n-1}\phi(\sigma^i\ux)\Biggr),\]
which is the consequence of the variational principle (see e.g., \cite[Theorem 9.10]{Wa}). 

The following result for one-sided shift spaces is due to Ledrappier \cite{Le}, whose generalization to topological dynamical systems is given by Walters \cite{Wa1}. We omit the proof since it is the same as that given in \cite{Le}. 

\begin{lemma}\label{4-1}
Let $g\in\cF$ be a positive function with $\cL_{\log g}1=1$. For a Borel probability measure $m$ on $X_\be$ the following are equivalent:

    (1) $\cL_{\log g}^*m=m$.

    (2) $m$ is $\sigma$-invariant and it is an equilibrium state with respect to $\log g$.  
Furthermore, if $m$ is a $g$-measure then $h_m(\sigma)+\int_{X_\be} \log g\ dm=0$.
\end{lemma}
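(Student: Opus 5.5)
The plan follows Ledrappier's argument: first show that $\sigma$-invariance of a $g$-measure reduces to $\cL_{\log g}1=1$, and then compute the entropy of $m$ via conditional expectations on the sigma-algebra $\sigma^{-1}\cB_X$, where $\cB_X$ is the Borel $\sigma$-algebra of $X_\be$. Write $\cL=\cL_{\log g}$.

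\emph{Step 1: the basic identities.} We have $\cL(f\cdot h\circ\sigma)=h\,\cL f$. Applying this with $f=1$, invariance follows from (1), since
\[\int h\circ\sigma\,dm=\int \cL(h\circ\sigma)\,dm=\int h\,\cL1\,dm=\int h\,dm.\]
This first holds for $h\in\cF$. It extends to all bounded Borel $h$ by a monotone class argument, because indicators of intervals generate the Borel $\sigma$-algebra. Next, for bounded Borel $f$ and $h$,
\[\int f\cdot h\circ\sigma\,dm=\int h\,\cL f\,dm=\int (\cL f)\circ\sigma\cdot h\circ\sigma\,dm .\]
Therefore $E_m[f\mid\sigma^{-1}\cB_X]=(\cL f)\circ\sigma$.

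\emph{Step 2: (1) implies the entropy identity and (2).} Let $\xi=\{C_0,C_1\}$. This partition is generating, so $h_m(\sigma)=H_m(\xi\mid\sigma^{-1}\cB_X)$. By Step 1,
\[E_m[\chi_{C_i}\mid\sigma^{-1}\cB_X](\ux)=\cL\chi_{C_i}(\sigma\ux)=g(i\,\sigma\ux)\quad\text{when } i\,\sigma\ux\in X_\be .\]
Consequently $h_m(\sigma)=-\int\log g\,dm$, which is the final assertion. To get the equilibrium property, take any invariant $\mu$. Then $h_\mu(\sigma)+\int\log g\,d\mu=\int\sum_{\uy\in\sigma^{-1}\ux}p_\uy\log(g(\uy)/p_\uy)\,d\mu(\ux)\le 0$, where $p_\uy$ is the conditional probability of $\uy$ given $\sigma^{-1}$ of the point. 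The inequality is Jensen's inequality, used together with $\sum_{\uy\in\sigma^{-1}\ux}g(\uy)=1$. Hence $\hat P(\log g)=0$ and $m$ attains it.

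\emph{Step 3: (2) implies (1).} For invariant $\mu$, equality in Jensen forces the conditional probabilities to equal $g$, that is, $E_\mu[f\mid\sigma^{-1}\cB_X]=(\cL f)\circ\sigma$. If $m$ is an equilibrium state, then $h_m(\sigma)+\int\log g\,dm=\hat P(\log g)=0$ by Step 2. Strictly, Step 2 needs a $g$-measure to exist, and I would take it from Proposition~\ref{functional} applied to $\phi=\log g$. Because $\cL1=1$, the eigenvalue there is $\la_\phi=1$, and one then passes to a measure as in Proposition~\ref{3-4}. Equality then gives $\int \cL f\,dm=\int (\cL f)\circ\sigma\,dm=\int f\,dm$, where the first equality is invariance and the second comes from integrating the conditional expectation identity.

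\emph{Main obstacle.} The difficulty is technical: $g$ is only in $\cF$, not continuous, and $\sigma$ has a variable number of preimages on $X_\be$. I would handle this by working modulo countable sets, which are $m$-null when $m$ is non-atomic or when $g>0$ is bounded below, and by checking the Jensen equality case carefully.
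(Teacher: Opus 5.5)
Your plan is Ledrappier's argument, which is what the paper itself relies on: it omits the proof and refers to \cite{Le}. Steps 1 and 2 are correct as written. So (1)$\Rightarrow$(2) and the identity $h_m(\sigma)+\int\log g\,dm=0$ are established.

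The gap is in Step 3. The implication (2)$\Rightarrow$(1) needs $\hat P(\log g)=0$. Your source for this is a $g$-measure obtained from Proposition \ref{functional} and then ``passed to a measure as in Proposition \ref{3-4}'', and that does not work for a general $g\in\cF$. Proposition \ref{3-4}(2)--(3) obtains continuity of $\ua\mapsto\nu_t(\chi_{[\um,\ua]})$, and hence countable additivity, from two ingredients special to the potential $t\chi_{C_1}$: the explicit series of Theorem \ref{Main A} and the identity $\phi_t(\la_t^{-1})=1$.

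A positive normalized functional on $\cF$ need not be a Borel measure. For example, if $\ua$ codes a non-simple point, the functional $f\mapsto\lim_{\ux\nearrow\ua}f(\ux)$ vanishes on $\cN$ and gives $0$ to every $[\um,\ub]$ with $\ub\prec\ua$. Yet it gives $1$ to $[\um,\ua)$, which is a countable increasing union of such intervals. Nothing in the Schauder argument of Proposition \ref{functional} rules out an eigenfunctional of this kind. Replacing it by the measure with the regularized distribution function can break $\cL^*$-invariance at the discontinuities of $g$.

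The shortcut from Ledrappier's continuous setting is also not available as it stands. That shortcut combines the variational principle with $\lim_n\frac1n\log|\cL^n1|_\infty=0$, and $\log g$ here need not be continuous.

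To close the argument, prove (2)$\Rightarrow$(1) under the extra input $\hat P(\log g)=0$. By your Step 2, this holds whenever some $g$-measure is known to exist, and with it your Jensen-equality argument goes through unchanged. This matches how the lemma is actually used in Proposition \ref{4-2}. There, $\mu_t$ is first shown directly to be a $g_t$-measure, so $\hat P(\log g_t)=0$ is already known when the converse direction is applied to an arbitrary equilibrium state $m$.
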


We need the following property: 

\begin{proposition}\label{4-2}
    For $t\in\R$ let $(\la_t,h_t,\nu_t)$ be the triple as given in Theorem \ref{Main C}(2). Then the $\sigma$-invariant measure $\mu_t=h_t \nu_t$ is a unique equilibrium state with respect to the potential $t\chi_{C_1}$. Furthermore, $P(t\chi_{C_1})=\log \la_t$. 
    \end{proposition}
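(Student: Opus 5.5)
The plan is to normalize the potential and then apply Lemma \ref{4-1}. Set
\[g_t=\frac{e^{t\chi_{C_1}}h_t}{\la_t\, h_t\circ\sigma}.\]
First check that $g_t$ is a positive element of $\cF$. From the explicit formula in Theorem \ref{Main C}(2), $h_t$ is a convergent sum of nonnegative multiples of indicators $\chi_{[\um,\sigma^n(\ud)]}$. Its $n=0$ term is $\chi_{[\um,\ud]}\equiv1$, so $h_t\geq 1/F(t)>0$. It is also bounded, so $h_t$ is bounded away from $0$ and $\infty$. Moreover $h_t\circ\sigma$ lies in $\cF$ because $\sigma$ maps intervals to intervals piecewise on $C_0$ and $C_1$. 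Hence $\log g_t$ is a bounded function, and a direct computation with $\cL_th_t=\la_th_t$ gives $\cL_{\log g_t}1=1$.

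Next I verify that $\mu_t=h_t\nu_t$ is a $g_t$-measure. The conjugation identity $\cL_{\log g_t}f=\cL_t(fh_t)/(\la_t h_t)$ reduces this to
\[\int\cL_{\log g_t}f\,d\mu_t=\nu_t(\cL_t(fh_t))/\la_t=\nu_t(fh_t)=\int f\,d\mu_t,\]
which uses $\cL_t^*\nu_t=\la_t\nu_t$. Here $\nu_t$ is the non-atomic probability measure of Proposition \ref{3-4}(3); since the functionals agree on $\chi_{[\um,\ua]}$ and hence on $\cF$, this passes to Borel measures. Lemma \ref{4-1} then says $\mu_t$ is an equilibrium state for $\log g_t$ and that $h_{\mu_t}(\sigma)+\int\log g_t\,d\mu_t=0$. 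Since $\int(\log h_t-\log h_t\circ\sigma)\,dm=0$ for every invariant $m$, we get
\[h_m(\sigma)+\int\log g_t\,dm=h_m(\sigma)+\int t\chi_{C_1}\,dm-\log\la_t\]
for every $m\in M(X_\be)$. So the two variational problems have the same maximizers, $\hat P(t\chi_{C_1})=\log\la_t$, and $\mu_t$ is an equilibrium state for $t\chi_{C_1}$. The potential $t\chi_{C_1}$ is continuous because $C_1$ is clopen, so $\hat P=P$ and $P(t\chi_{C_1})=\log\la_t$.

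For uniqueness, let $m$ be any equilibrium state for $t\chi_{C_1}$. By the identity above, $m$ is also an equilibrium state for $\log g_t$, and Lemma \ref{4-1} ((2)$\Rightarrow$(1)) gives $\cL_{\log g_t}^*m=m$. Translating back, $\tilde\nu:=h_t^{-1}m$ satisfies $\cL_t^*\tilde\nu=\la_t\tilde\nu$ as a positive functional on $\cF$. This translation needs $h_t^{-1}\in\cF$; I expect to get it from $\inf h_t>0$ and the fact that $\cF$ is closed under composition with functions smooth on the range.

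By Theorem \ref{Main C}(1),(2), the eigenvalue $\la_t$ is simple with a one-dimensional space of eigenfunctionals, so $\tilde\nu=c\,\nu_t$. Normalizing gives $m=\mu_t$ on $\cF$, hence on all intervals, hence as Borel measures. The main obstacle is this last identification: a Borel probability measure must be recognized as an element of $\cF^*$ and must be determined by its values on $\cF$. Both follow because indicators of intervals lie in $\cU\subset\cF$ and generate the Borel $\sigma$-algebra of $X_\be$. One subtlety is that $\cF$ identifies functions modulo countable sets, so atoms of $m$ must be ruled out. I would handle this by noting that $\nu_t$ is non-atomic and that $m$, being an invariant equilibrium state of positive entropy (as $P(t\chi_{C_1})>\sup\int t\chi_{C_1}$), cannot charge countable sets in a way that breaks the identification.
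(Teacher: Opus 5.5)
Your existence and pressure argument is the same as the paper's: normalize to $g_t$, check that $\mu_t$ is a $g_t$-measure via $\cL_t^*\nu_t=\la_t\nu_t$, apply Lemma \ref{4-1}, and cancel the coboundary $\log h_t-\log h_t\circ\sigma$.

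\textbf{Uniqueness: a different route.} The paper shows that an arbitrary equilibrium state $m$ is a $g_t$-measure. It then uses the spectral gap of Theorem \ref{Main C}(3) to get $\int f\,dm=\int\cL_t^n(fh_t)/(\la_t^nh_t)\,dm\to\int f\,d\mu_t$. You instead pull $m$ back to the positive eigenfunctional $h_t^{-1}m$ of $\cL_t^*$ at $\la_t$. The rigidity in Theorem \ref{Main A} forces every eigenfunctional at $\la_t$ to equal $\nu(1)\,\nu_t$, and $\nu_t(h_t)=1$ fixes the constant.

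\emph{What each route buys.} Yours needs only $\la_t>r_t$ and Lemma \ref{key lemma}. It does not need the uniqueness of the leading eigenvalue or the spectral decomposition, so it is more elementary and better adapted to the explicit structure of Section 3. The paper's route is the standard one, valid for any potential with a spectral gap.

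\emph{Shared prerequisites.} Both routes need $h_t^{-1}\in\cF$; you flag this, while the paper uses it tacitly. Both also tacitly need $m$ to define an element of the quotient space $\cF$, i.e.\ $m$ must be non-atomic. The paper's error term is likewise only controlled in $\|\cdot\|_\cB$, so it faces the same issue.

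\textbf{A loose step: non-atomicity of $m$.} Your justification here is too loose. Positive entropy does not exclude atoms: a convex combination of a positive-entropy measure and a periodic-orbit measure has both. A correct argument runs as follows.
\begin{enumerate}
\item Atoms of a $\sigma$-invariant probability measure lie on periodic orbits. If $m(\{x\})=a>0$, then every point of the forward orbit of $x$ has mass at least $a$, so $x$ is preperiodic. A strictly preperiodic atom would make the periodic point it falls into heavier than itself.
\item For every invariant $\mu$ and every $n$, $\int t\chi_{C_1}\,d\mu=\frac1n\int S_n(t\chi_{C_1})\,d\mu\le\frac1n\sup S_n(t\chi_{C_1})$. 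Letting $n\to\infty$ gives $\int t\chi_{C_1}\,d\mu\le\log r_t<\log\la_t$ by Proposition \ref{hyperbolic potential}.
\item Write $m=c\,m_1+(1-c)\,m_2$, where $m_1$ is the restriction of $m$ to the periodic points. This restriction is invariant, because non-periodic preimages of periodic points form a countable set without atoms and hence are $m$-null. Also $h_{m_1}=0$.
\item Affinity of entropy gives $\log\la_t\le c\log r_t+(1-c)\log\la_t$. This forces $c=0$, so $m$ is non-atomic.
\end{enumerate}
With this in place, $f\mapsto\int fh_t^{-1}\,dm$ is a well-defined bounded functional on $\cF$, and the rest of your argument goes through.
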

\begin{proof}
We follow a standard argument using the spectral gap property of $\cL_t$ for $t\in\R$. For $t\in\R$ set  $g_t=e^{t\chi_{C_1}}h_t/\la_t(h_t\circ\sigma)$. 
Since $g_t>0$, $\inf h_t>0$ and $h_t/(h_t\circ\sigma)\in\cF$, we have $g_t\in\cF$, which yields that $\log g_t\in\cF$. 
In addition, 
\[\cL_{\log g_t}1(\ux)=\sum_{\ux=\sigma\uy}\frac{e^{t\chi_{C_1}(\uy)}h_t(\uy)}{\la_t(h_t(\sigma \uy))}=\frac{\cL_th_t(\ux)}{\la_th_t(\ux)}=1\]
for $\ux\in X_\be$ and 
\begin{align*}
\int_{X_\be}\cL_{\log g_t}f d\mu_t
&=\int_{X_\be}\frac{\cL_t(fh_t)}{\la_th_t}h_t d\nu_t \\
&=\int_{X_\be}\frac{fh_t}{\la_t} d(\cL_t^*\nu_t) \\
&=\int_{X_\be}f d\mu_t
\end{align*}
for $f\in\cF$. This shows that $\mu_t$ is a $g_t$-measure. By Lemma \ref{4-1}, we have that $\mu_t$ is an equilibrium state for $t\chi_{C_1}+\log h_t -\log \la_t -\log h_t\circ\sigma$. In fact 
\[\int_{X_\be}(t\chi_{C_1}+\log h_t -\log \la_t -\log h_t\circ\sigma)d\mu_t=\int_{X_\be}t\chi_{C_1} d\mu_t-\log \la_t,\]
which yields that the measure $\mu_t$ is an equilibrium state for $t\chi_{C_1}-\log \la_t$. Since  $\log \la_t$ is a constant, we have that $\mu_t$ is an equilibrium state for $t\chi_{C_1}$ (see e.g., Theorem 9.4 (iv) in \cite{Wa}). Furthermore, by the above equality and Lemma \ref{4-1} (2) we have
\[0=\hat{P}(\log g_t)=P(t\chi_{C_1})-\log \la_t,\]
which yields $P(t\chi_{C_1})=\log\la_t$. 

Let $m$ be an equilibrium state for $t\chi_{C_1}$. Then we can see that it is also an equilibrium state for $\log g_t=t\chi_{C_1}+\log h_t -\log \la_t -\log h_t\circ\sigma$, which yields that $m$ is a $g_t$-measure. 
By Theorem \ref{Main C} (3), the transfer operator $\cL_t$ has a spectral gap property, i.e., there is $\theta<\la_t$ such that 
\[\cL_t^nf=\la_t^n\int_{X_\be}f\ d\nu_\be \cdot h_t+O(\theta^n)\]
for any $f\in\cF$. Hence 
\begin{align*}
\int_{X_\be} f dm
&=\int_{X_\be} \cL_{\log g_t}^nf dm \\
&= \int_{X_\be} \frac{\cL_t^n(fh_t)}{\la_t^n h_t} dm\\
&=\int_{X_\be}\frac{\int_{_{X_\be}}fh_t\ d\nu_t\cdot h_t+O(\theta^n/\la_t^n)}{h_t} \\
&\to \int_{X_\be} fh_t d\nu_t=\int_{X_\be} f d\mu_t
\end{align*}
as $n\to\infty$ for any $f\in\cF$. Since $\cF$ includes the set of all indicator functions of any intervals in $X_\be$, we obtain $m=\mu_t$. 
\end{proof}

In the rest of this section, we give a formula for the dimension spectrum of the frequency set 
\[\Lambda_\alpha:=\Bigl\{(x_i)_{i=1}^\infty\in X_\be\ ; \lim_{n\to\infty}\frac{1}{n}\sum_{i=1}^n x_i=\alpha\Bigr\}\]
for $\al=\al(t)=\int_{X_\be}\chi_{C_1}d\mu_t$ for $t\in\R$, where $\mu_t$ is a unique equilibrium state for $t\chi_{C_1}$. The following conditional variational principle is given in \cite[Proposition 6]{Li} as a consequence of some results in \cite{Pf-Su} (see also \cite{Th}), which is a key tool for the proof of the main result (for more general cases, see \cite{Cl,Ho}).

\begin{theorem}\label{vp}
    For $\alpha\in[0,1]$ we have
    \[\dim_{H}\Lambda_\al=\frac{\sup_{\mu\in M(X_\be)}\{h_\mu(\sigma);\ \int_{X_\be}\chi_{C_1} d\mu=\al\}}{\log \be}.\] 
     Here, we regard $\sup\emptyset$ as $0$. 
\end{theorem}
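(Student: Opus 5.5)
The plan is the standard two-inequality argument for conditional variational principles. Throughout, write $\al$ for the prescribed frequency and
\[E(\al)=\sup\Bigl\{h_\mu(\sigma);\ \mu\in M(X_\be),\ \int_{X_\be}\chi_{C_1}\,d\mu=\al\Bigr\}.\]
We work on $X_\be$ with the metric $d(\ux,\uy)=\be^{-\min\{i;\,x_i\neq y_i\}}$. With this metric an $n$-cylinder has diameter comparable to $\be^{-n}$, and the number of $n$-cylinders satisfies $c_n\asymp \be^n$ (Parry). Hausdorff dimension with respect to $d$ agrees with that of the image under $\pi_\be$ (cf.\ \cite[Theorem 1]{Li}), so it suffices to compute in $(X_\be,d)$. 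We then prove the upper bound $\dim_H\La_\al\le E(\al)/\log\be$ and the lower bound $\dim_H\La_\al\ge E(\al)/\log\be$ separately.

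\emph{Upper bound.} Fix $\varepsilon>0$. For large $n$, let $\cC_n(\al,\varepsilon)$ be the set of $n$-cylinders $[a_1\dots a_n]_1^n$ with $|\frac1n\sum a_i-\al|<\varepsilon$. Every point of $\La_\al$ lies in $\bigcup_{n\ge N}\bigcup\cC_n(\al,\varepsilon)$ for every $N$. It therefore suffices to show
\[\limsup_n \frac1n\log\#\cC_n(\al,\varepsilon)\le E(\al)+\delta(\varepsilon),\qquad \delta(\varepsilon)\to0.\]
To prove this, for each $n$ take the uniform measure on one point chosen in each cylinder of $\cC_n(\al,\varepsilon)$ and average it along the orbit, in the usual proof of the variational principle (Misiurewicz's argument, \cite[Theorem 8.6]{Wa}). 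Any weak-$^*$ limit $\mu$ is invariant. Since $\chi_{C_1}$ is continuous on $X_\be$ (because $C_1$ is clopen), $\mu$ satisfies $|\int\chi_{C_1}d\mu-\al|\le\varepsilon$ and $h_\mu(\sigma)\ge\limsup\frac1n\log\#\cC_n(\al,\varepsilon)$. Upper semicontinuity of $\al\mapsto E(\al)$ then finishes the bound. That semicontinuity holds because entropy is upper semicontinuous on the expansive system $(X_\be,\sigma)$ and the constraint set is weak-$^*$ compact. Covering $\La_\al$ by these cylinders and summing $\be^{-ns}$ over $n$ gives $\mathcal H^s(\La_\al)=0$ for every $s>(E(\al)+\delta)/\log\be$.

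\emph{Lower bound.} This is the main obstacle, because $(X_\be,\sigma)$ generally lacks specification. First use approximation: for $\mu$ with $\int\chi_{C_1}d\mu=\al$ and $h_\mu>E(\al)-\varepsilon$, choose an ergodic invariant measure supported on a subshift of finite type $X^{(k)}\subset X_\be$ whose entropy and frequency are within $\varepsilon$ of those of $\mu$. Here $X^{(k)}$ is the beta-shift defined by truncating $\ud$ to a periodic sequence $\preceq\ud$; it is a mixing SFT with specification, and its entropy tends to $\log\be$. I would obtain this measure by taking the equilibrium states $\mu_{t}$ of Proposition \ref{4-2} restricted to the approximating systems, which vary continuously in frequency. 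Inside the mixing SFT $X^{(k)}$, apply the standard Moran-set construction. Concatenate long blocks that are $\mu$-typical for frequency and entropy, joined by bounded connecting words, and raise the block length along a sequence so that every point of the resulting Cantor set $M\subset X^{(k)}$ has frequency exactly $\al$. Then $M\subset\La_\al$. A mass distribution on $M$ assigns mass at most roughly $e^{-n(h_\mu-2\varepsilon)}$ to each $n$-cylinder, so the mass distribution principle gives $\dim_H M\ge(h_\mu-2\varepsilon)/\log\be$. Letting $\varepsilon\to0$ gives the reverse inequality.

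Finally, when no invariant measure has frequency $\al$ (the case $\sup\emptyset$), $\La_\al$ is empty. Indeed, any point of $\La_\al$ would produce such a measure as a weak-$^*$ limit of its empirical measures, again using that $\chi_{C_1}$ is continuous. So $\dim_H\La_\al=0$, consistent with the convention $\sup\emptyset=0$. I expect the delicate points to be exactly the SFT approximation and the continuity of the constraint near the boundary values of $\al$.
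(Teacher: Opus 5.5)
Your argument is viable, but it takes a genuinely different route from the paper. The paper does not prove this statement: it imports it from \cite[Proposition 6]{Li}, which rests on the Pfister--Sullivan theory \cite{Pf-Su} for $\be$-shifts. That theory uses the $g$-almost product property (blocks can be concatenated after altering a sublinear number of symbols) and uniform separation to show that the Bowen topological entropy of $\La_\al$ equals $\sup\{h_\mu;\ \int\chi_{C_1}d\mu=\al\}$; for the metric with base $\be$, this entropy divided by $\log\be$ is $\dim_H\La_\al$.

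Your upper bound (cylinder counting, Misiurewicz's construction, upper semicontinuity of entropy, $C_1$ clopen) is the elementary half of this and is fine as written. For the lower bound you replace the $g$-almost product property by approximation from inside by the SFT beta-shifts $X_{\be_k}\subset X_\be$, which have specification, together with the paper's own transfer-operator results. This makes the proof self-contained and ties it to the explicit data $\phi_t,\la_t$. The cited route instead treats every $\al$ and every invariant measure uniformly, with no SFT approximation and no equilibrium states.

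There are two points you still need to supply.

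\emph{Convergence of the approximations.} You mean the equilibrium states of $t\chi_{C_1}$ on $X_{\be_k}$, not restrictions of $\mu_t$, and you need their entropies to converge to $E(\al)$.
\begin{enumerate}
\item The coefficients of the series $\phi^{(k)}_t$ for $X_{\be_k}$ agree with those of $\phi_t$ up to an index tending to infinity. All of them are bounded by $|e^{S_{n-1}\phi_t}|_\infty$.
\item Since $r_t<\la_t$ (Proposition~\ref{hyperbolic potential}), it follows that $\phi^{(k)}_t\to\phi_t$ uniformly on $[0,\rho]$ for some $\rho\in(\la_t^{-1},r_t^{-1})$. Hence $P_k(t)\to P(t)$.
\item Convexity then gives $P_k'\to P'$ locally uniformly. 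So for large $k$ there is $t_k\to t(\al)$ whose equilibrium state has frequency \emph{exactly} $\al$ and entropy $P_k(t_k)-t_k\al\to E(\al)$. Exact frequency is what puts your single-SFT Moran set inside $\La_\al$.
\end{enumerate}
You should also derive that $\al(t)$ sweeps $(0,c_\be)$ directly, from convexity of $P=\log\la_t$, $P(t)\ge tc_\be$ and $\la_t\to1$ as $t\to-\infty$ (Proposition~\ref{5-1}(1)). Do not cite Proposition~\ref{5-1}(4) for this, since the paper takes it from \cite{Cl}, which is itself a conditional variational principle.

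\emph{The endpoint $\al=c_\be$.} This case is not covered as written: no equilibrium state has frequency $c_\be$, and $E(c_\be)$ is not known to vanish. The fix:
\begin{enumerate}
\item $E$ is concave (entropy is affine) and upper semicontinuous on $[0,c_\be]$, hence continuous at $c_\be$.
\item Take $\al_j\nearrow c_\be$ and the corresponding equilibrium states on increasing $X_{\be_{k_j}}$.
\item Build the Moran set using blocks from the $j$-th measure at stage $j$, with rapidly growing stage lengths. Its points have frequency exactly $c_\be$ and lie in $X_\be$ by closedness.
\end{enumerate}
This gives $\dim_H\La_{c_\be}\ge E(c_\be)/\log\be$. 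At $\al=0$ nothing is needed, since only $\delta_{\um}$ has frequency $0$.
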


The next lemma relates the above formula to the equilibrium state $\mu_t$. 

\begin{lemma}\label{4-4}
    For $t\in\R$ let $\mu_t$ be an equilibrium state with respect to  $t\chi_{C_1}$. Then \[\dim_{H}\Lambda_{\al(t)}=\frac{h_{\mu_t}(\sigma)}{\log\be},\]
    where $\al(t)=\int_{X_\be}\chi_{C_1}d\mu_t$.
    \end{lemma}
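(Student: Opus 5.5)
By Theorem \ref{vp}, it suffices to show that
\[\sup_{\mu\in M(X_\be)}\Bigl\{h_\mu(\sigma);\ \int_{X_\be}\chi_{C_1}d\mu=\al(t)\Bigr\}=h_{\mu_t}(\sigma).\]
The supremum is over a nonempty set, because $\mu_t$ itself lies in $M(X_\be)$ and satisfies $\int\chi_{C_1}d\mu_t=\al(t)$ by the definition of $\al(t)$. The convention $\sup\emptyset=0$ therefore plays no role. This also gives the inequality ``$\geq$'' immediately.

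For the reverse inequality, I would use that $\mu_t$ is an equilibrium state for $t\chi_{C_1}$, as established in Proposition \ref{4-2}. Take any $\mu\in M(X_\be)$ with $\int\chi_{C_1}d\mu=\al(t)$. The variational characterization of $\hat P(t\chi_{C_1})$ gives
\[h_\mu(\sigma)+t\al(t)=h_\mu(\sigma)+\int t\chi_{C_1}d\mu\leq \hat P(t\chi_{C_1})=h_{\mu_t}(\sigma)+\int t\chi_{C_1}d\mu_t=h_{\mu_t}(\sigma)+t\al(t).\]
Cancelling the common term $t\al(t)$ yields $h_\mu(\sigma)\leq h_{\mu_t}(\sigma)$. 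Taking the supremum over all such $\mu$ gives ``$\leq$''. Dividing by $\log\be$ and applying Theorem \ref{vp} then completes the argument.

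The only point needing care is that $\hat P(t\chi_{C_1})$ is finite, so that the cancellation is legitimate. This holds because $h_\mu(\sigma)\leq h_{top}(\sigma)=\log\be<\infty$ on the beta-shift, and $t\chi_{C_1}$ is bounded. The potential $\chi_{C_1}$ is continuous, since $C_1$ is clopen, so $\hat P=P$ as well, although only the defining supremum is actually used. I do not expect any genuine obstacle: the existence (indeed uniqueness) of the equilibrium state and the conditional variational principle are both taken from earlier results.
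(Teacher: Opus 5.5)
Your proof is correct and follows essentially the same route as the paper: use the equilibrium property of $\mu_t$ to compare $h_\mu(\sigma)+t\al(t)$ with $h_{\mu_t}(\sigma)+t\al(t)$, then apply Theorem \ref{vp}. The only difference is that the paper invokes uniqueness of $\mu_t$ to get the strict inequality $h_{\mu_t}(\sigma)>h_m(\sigma)$ for $m\neq\mu_t$. Your weak inequality already suffices for the supremum, and you also state explicitly that $\mu_t$ itself lies in the constraint set, which the paper leaves implicit.
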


\begin{proof}
    Let $m$ be any $\sigma$-invariant probability measure on $X_\be$ with $m\neq \mu_t$ and $m(\chi_{C_1})=\al(t)$. Since $\mu_t$ is a unique equilibrium state for $t\chi_{C_1}$, we know that 
    \[h_{\mu_t}(\sigma)+\int_{X_\be}t\chi_{C_1} d\mu_t> h_m(\sigma)+ \int_{X_\be}t\chi_{C_1}dm.\]
Since $\displaystyle{\al(t)=\int_{X_\be}\chi_{C_1} d\mu_t}=\int_{X_\be}\chi_{C_1} dm$, we obtain $h_{\mu_t}(\sigma)>h_m(\sigma)$. By Theorem \ref{vp}, we have the conclusion.
\end{proof}

\begin{theorem}\label{Main D}
For $t\in\R$ let $\phi_t(z)$ be the power series as defined in (\ref{power series}). Let $\la_t$ be the inverse of a unique positive zero of $1-\phi_t(z)$ and let $\mu_t$ be an equilibrium state with respect to  $t\chi_{C_1}$. Then 
\begin{align}\label{alphat}
\al(t)&=\notag
\int_{X_\be}\chi_{C_1}d\mu_t \\ 
&=\frac{1}{F(t)}\sum_{n=1}^\infty\Bigl(\sum_{i=1}^n q_i(1)-1\Bigr)\frac{q_{n}(1)H_{n-1}(t,\ud)}{\la_t^n},
\end{align}
where $F(t)$ is the normalizing constant as defined in (\ref{normalizing}) and 
    \[\dim_H(\Lambda_{\alpha(t)})=\frac{\log \la_t-t\al(t)}{\log\be}.\]
In particular, if $\be\in(1,2)$ is simple, i.e., there is a positive integer $n_0\geq2$ such that $\tau_\be^{n_0-1}(1)=1/\be$, the function $\al(t)$ can be given by
\[\al(t)=\frac{1}{\widehat F(t)}\Biggl(\sum_{l=1}^{n_0-1}\frac{g_{l+1}(1)H_l(t,(g_m(1))_{m=1}^\infty)}{\la_t^l}\sum_{n=1}^{n_0-l} \frac{g_{n+l}(1)H_{n-1}(t, (g_{m+l}(1))_{m=1}^\infty)}{\la_t^n}\Biggr),\]
where 
\[\widehat F(t)=\sum_{l=0}^{n_0-1}\frac{H_l(t,(g_m(1))_{m=1}^\infty)}{\la_t^l}\sum_{n=1}^{n_0-l} \frac{g_{n+l}(1)H_{n-1}(t, (g_{m+l}(1))_{m=1}^\infty)}{\la_t^n}.\]

\end{theorem}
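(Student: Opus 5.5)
The plan is to compute $\al(t)=\int_{X_\be}\chi_{C_1}\,d\mu_t=\nu_t(\chi_{C_1}h_t)$ directly from the explicit formulae of Theorem \ref{Main C}(2). The dimension formula then follows from the thermodynamic identities already established. Indeed, by Proposition \ref{4-2}, $\mu_t$ is the unique equilibrium state for $t\chi_{C_1}$ and $P(t\chi_{C_1})=\log\la_t$. Hence $h_{\mu_t}(\sigma)=\log\la_t-t\int\chi_{C_1}d\mu_t=\log\la_t-t\al(t)$, and Lemma \ref{4-4} gives $\dim_H\La_{\al(t)}=(\log\la_t-t\al(t))/\log\be$. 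So the substance lies in the formula (\ref{alphat}) for $\al(t)$.

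To compute $\nu_t(\chi_{C_1}h_t)$, insert $h_t=F(t)^{-1}\sum_{n\ge0}H_n(t,\ud)\la_t^{-n}\chi_{[\um,\sigma^n(\ud)]}$ and exchange sum and $\nu_t$. This is legitimate by norm convergence in $\cF$, as in the proof of Theorem \ref{Main B}.
\begin{itemize}
\item If $q_{n+1}(1)=0$, then $[\um,\sigma^n(\ud)]\subset C_1^c$ and the term vanishes.
\item If $q_{n+1}(1)=1$, then $\chi_{C_1}\chi_{[\um,\sigma^n(\ud)]}=\chi_{[\um,\sigma^n(\ud)]}-\chi_{C_1^c}$ off a single point. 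Here $C_1^c=[\um,0\ud]$ (note $0\ud\in X_\be$ by Proposition \ref{Parry1}).
\end{itemize}
By Lemma \ref{key lemma} (or Theorem \ref{Main A} with $\ua=0\ud$ and $\phi_t(\la_t^{-1})=1$), $\nu_t(\chi_{[\um,0\ud]})=\la_t^{-1}\nu_t(\chi_{[\um,\ud]})=\la_t^{-1}$. Hence, by (\ref{3-8(1)}),
\[\nu_t(\chi_{C_1}\chi_{[\um,\sigma^n(\ud)]})=q_{n+1}(1)\sum_{m=2}^\infty\frac{q_{n+m}(1)H_{m-1}(t,\sigma^n(\ud))}{\la_t^m}.\]

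Multiplying by $H_n(t,\ud)\la_t^{-n}$ and using $H_n(t,\ud)H_{m-1}(t,\sigma^n(\ud))=H_{n+m-1}(t,\ud)$, I would regroup by $N=n+m$, exactly as in the computation of $F(t)$ in Theorem \ref{Main C}(2). This yields
\[\sum_{N}\frac{q_N(1)H_{N-1}(t,\ud)}{\la_t^N}\,\#\{0\le n\le N-2:\ q_{n+1}(1)=1\}.\]
The count equals $\sum_{i=1}^{N-1}q_i(1)=\sum_{i=1}^N q_i(1)-1$ whenever $q_N(1)=1$, which gives (\ref{alphat}). Absolute convergence, needed to justify the rearrangement, follows from $H_n(t,\ud)\la_t^{-n}\le|e^{S_n\phi_t}|_\infty\la_t^{-n}$, which decays exponentially since $r_t<\la_t$.

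For the simple case, $\ud$ is periodic with period $n_0$, and each block of the sum is the first block multiplied by $\rho=H_{n_0}(t,\ud)\la_t^{-n_0}<1$. Rather than manipulating (\ref{alphat}) directly, I would return to the pre-regrouped double sums over $(n,m)$, for both the numerator and $F(t)$. I would split $n=l+kn_0$ with $0\le l\le n_0-1$ and use $\sigma^{n_0}(\ud)=\ud$. The inner sum over $m$ is $\nu_t(\chi_{[\um,\sigma^l(\ud)]})$, which I would rewrite as the finite greedy sum $\sum_{n=1}^{n_0-l}g_{n+l}(1)H_{n-1}(t,(g_{m+l}(1))_m)\la_t^{-n}$. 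This uses Proposition \ref{3-4}(1): it shows that the expansion via $\sigma^l(\ud)$ can be replaced by the greedy expansion of $\pi_\be(\sigma^l\ud)$, which terminates.

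The outer sum over $k$ then produces a common factor $(1-\rho)^{-1}$ in numerator and denominator, which cancels. This leaves $\widehat F(t)$ for $F(t)$. For the numerator, the restriction $q_{l+1}=g_{l+1}(1)=1$ and the subtraction of the $m=1$ term combine into the displayed expression, with $l$ running from $1$ to $n_0-1$. I expect this bookkeeping in the simple case to be the main obstacle. In particular, I need to check carefully how the $l=0$ term and the subtracted $\la_t^{-1}$ contribution disappear, and how the last digit $g_{n_0}(1)$ versus $q_{n_0}(1)$ is handled.
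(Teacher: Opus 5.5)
Your derivation of the dimension formula and of (\ref{alphat}) is the paper's argument. The only difference is cosmetic: you cancel the $m=1$ term against $\nu_t(\chi_{[\um,0\ud]})=\la_t^{-1}$ before regrouping, whereas the paper subtracts $\nu_t(\chi_{[\um,1\um]})=\la_t^{-1}$ and invokes $\phi_t(\la_t^{-1})=1$ afterwards. For simple $\be$ you also use the paper's block decomposition. However, the step you leave open is the only nontrivial one, and the mechanism you anticipate is wrong: the $l=0$ term and the subtracted $\la_t^{-1}$ pieces do not disappear.

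Concretely, splitting $n=l+kn_0$ in your numerator gives
\[
K(t)\sum_{l=0}^{n_0-2}\frac{g_{l+1}(1)H_l(t,(g_m(1))_{m=1}^\infty)}{\la_t^l}\Bigl(\nu_t(\chi_{[\um,\sigma^l(\ud)]})-\frac{1}{\la_t}\Bigr),
\]
where $K(t)=(1-e^{t\sum_{i=1}^{n_0}q_i(1)}\la_t^{-n_0})^{-1}$. The block $l=n_0-1$ is killed by $q_{n_0}(1)=0$. If the $l=0$ term and the $-\la_t^{-1}$ terms simply dropped out, you would get the stated sum only over $1\le l\le n_0-2$. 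That is missing its top summand $l=n_0-1$, which equals $H_{n_0-1}(t,(g_m(1))_{m=1}^\infty)\la_t^{-n_0}$ because $g_{n_0}(1)=1$ and the inner sum is $\la_t^{-1}$.

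The missing identity runs as follows. The $\nu_t$-part of the $l=0$ term is $\nu_t(\chi_{[\um,\ud]})=1$. The subtracted parts add up to $\sum_{n=1}^{n_0-1}g_n(1)H_{n-1}(t,(g_m(1))_{m=1}^\infty)\la_t^{-n}$. Proposition~\ref{3-4}(1) at $x=1$, i.e.\ $\phi_t(\la_t^{-1})=1$ rewritten through the terminating greedy expansion of $1$, then gives
\[
1-\sum_{n=1}^{n_0-1}\frac{g_n(1)H_{n-1}(t,(g_m(1))_{m=1}^\infty)}{\la_t^{n}}=\frac{g_{n_0}(1)H_{n_0-1}(t,(g_m(1))_{m=1}^\infty)}{\la_t^{n_0}}.
\]
This is exactly the missing top summand, and it is where $g_{n_0}(1)=1$ takes over from $q_{n_0}(1)=0$.

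The paper's bookkeeping is cleaner. It keeps the full sums over $m\ge1$, working with $Q(t)=\sum_{n}(\sum_{i\le n}q_i(1))\,q_n(1)H_{n-1}(t,\ud)\la_t^{-n}$, whose $l=0$ block is exactly $1$. It writes $\al(t)=(Q(t)-1)/F(t)$ with $1=K(t)\cdot K(t)^{-1}$, and observes that $1-K(t)^{-1}=e^{t\sum_{i=1}^{n_0}q_i(1)}\la_t^{-n_0}$ is that same top summand.
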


\begin{proof}
    Let $t\in\R$. By Lemma \ref{4-4}, we know that 
    \[\dim_H(\Lambda_{\al(t)})=\frac{h_{\mu_t}(\sigma)}{\log\be}.\]
    By Proposition \ref{4-2} and the variational principle, we have that 
    \[\log \la_t=P(t\chi_{C_1})=h_{\mu_t}(\sigma)+t\al(t),\]
    which gives the equality
    \[\dim_H(\Lambda_{\alpha(t)})=\frac{\log \la_t-t\al(t)}{\log\be}.\]
    
The formula for $\al(t)$ is given by the direct calculation: 
\begin{align*}
    \int_{X_\be}\chi_{C_1} d\mu_t
    &=\frac{1}{F(t)}\sum_{n=0}^\infty \frac{H_{n}(t,\ud)}{\la_t^n}\nu_t(\chi_{C_1}\cdot \chi_{[\um,\sigma^n(\ud)]}) \\
    &=\frac{1}{F(t)}\sum_{n=0}^\infty \frac{H_{n}(t,\ud)}{\la_t^n}q_{n+1}(1)\nu_t(\chi_{[1\um,\sigma^n(\ud)]}) \\
    &=\frac{1}{F(t)}\sum_{n=0}^\infty \frac{H_{n}(t,\ud)}{\la_t^n}q_{n+1}(1)(\nu_t(\chi_{[\um,\sigma^n(\ud)]})-\nu_t(\chi_{[\um,1\um]})) \\
    &=\frac{1}{F(t)}\sum_{n=0}^\infty \frac{H_{n}(t,\ud)}{\la_t^n}q_{n+1}(1)\sum_{m=1}^\infty\frac{q_{n+m}(1)H_{m-1}(t,\sigma^n(\ud))}{\la_t^m} \\
    &\ \ -\frac{1}{F(t)}\sum_{n=0}^\infty \frac{H_{n}(t,\ud)}{\la_t^{n+1}}q_{n+1}(1) \\
    &=\frac{1}{F(t)}\sum_{n=0}^\infty\sum_{m=1}^{\infty}\frac{q_{n+m}(1) H_{n+m-1}(t,\ud)}{\la_t^{n+m}}q_{n+1}(1)-\frac{1}{F(t)} \\
    &=\frac{1}{F(t)}\sum_{n=1}^\infty\Bigl(\sum_{i=1}^n q_i(1)\Bigr)\frac{q_{n}(1)H_{n-1}(t,\ud)}{\la_t^n}-\frac{1}{F(t)} \\
    &=\frac{1}{F(t)}\sum_{n=1}^\infty\Bigl(\sum_{i=1}^n q_i(1)-1\Bigr)\frac{q_{n}(1)H_{n-1}(t,\ud)}{\la_t^n}.
\end{align*}

Assume that there is a positive integer $n_0\geq2$ such that $\tau_\be^{n_0-1}(1)=1/\be$. We know that $\ud=\overline{q_1(1)\dots q_{n_0}(1)}^\infty$, $q_i(1)=g_i(1)$ for $1\leq i< n_0$, $0=q_{n_0}(1)<g_{n_0}(1)=1$ and $g_i(1)=0$ for $i\geq n_0+1$. Set 
\[K(t)=\frac{1}{1-\exp\Bigl(t\cdot\sum_{i=1}^{n_0}q_i(1)\Bigr)/\la_t^{n_0}}.\] By Proposition \ref{3-4} (1), we have 
\begin{align*}
    F(t)&=\sum_{n=1}^\infty \frac{n q_n(1) H_{n-1}(t, \ud)}{\la_t^n} \\
    &=K(t)\cdot\sum_{l=0}^{n_0-1}\frac{H_l(t,(q_m(1))_{m=1}^\infty)}{\la_t^l}\sum_{n=1}^\infty \frac{q_n(\tau_\be^l(1))H_{n-1}(t, (q_m(\tau_\be^l(1)))_{m=1}^\infty)}{\la_t^n} \\
    &=K(t)\cdot\sum_{l=0}^{n_0-1}\frac{H_l(t,(g_m(1))_{m=1}^\infty)}{\la_t^l}\sum_{n=1}^\infty \frac{g_n(\tau_\be^l(1))H_{n-1}(t, (g_m(\tau_\be^l(1)))_{m=1}^\infty)}{\la_t^n} \\
    &=K(t)\cdot\sum_{l=0}^{n_0-1}\frac{H_l(t,(g_m(1))_{m=1}^\infty)}{\la_t^l}\sum_{n=1}^{n_0-l} \frac{g_{n+l}(1)H_{n-1}(t, (g_{m+l}(1))_{m=1}^\infty)}{\la_t^n} \\&=K(t)\widehat{F}(t).
    \end{align*}
Similarly, we obtain 
\begin{align*}
    Q(t)&:=\sum_{n=1}^\infty \Bigl(\sum_{i=1}^nq_i(1)\Bigr)\frac{ q_n(1) H_{n-1}(t, \ud)}{\la_t^n} \\
    &=K(t)\cdot\sum_{l=0}^{n_0-1}\frac{q_{l+1}(1)H_l(t,(q_m(1))_{m=1}^\infty)}{\la_t^l}\sum_{n=1}^\infty \frac{q_n(\tau_\be^l(1))H_{n-1}(t, (q_m(\tau_\be^l(1)))_{m=1}^\infty)}{\la_t^n} \\
    &=K(t)\cdot\sum_{l=0}^{n_0-2}\frac{g_{l+1}(1)H_l(t,(g_m(1))_{m=1}^\infty)}{\la_t^l}\sum_{n=1}^\infty \frac{g_n(\tau_\be^l(1))H_{n-1}(t, (g_m(\tau_\be^l(1)))_{m=1}^\infty)}{\la_t^n} \\
    &=K(t)\cdot\sum_{l=0}^{n_0-2}\frac{g_{l+1}(1)H_l(t,(g_m(1))_{m=1}^\infty)}{\la_t^l}\sum_{n=1}^{n_0-l} \frac{g_{n+l}(1)H_{n-1}(t, (g_{m+l}(1))_{m=1}^\infty)}{\la_t^n}.
\end{align*}
Hence 
\begin{align*}
    &\al(t)
    =\frac{Q(t)-1}{F(t)} \\
    &=\frac{1}{\widehat F(t)}\Biggl(\sum_{l=0}^{n_0-2}\frac{g_{l+1}(1)H_l(t,(g_m(1))_{m=1}^\infty)}{\la_t^l}\sum_{n=1}^{n_0-l} \frac{g_{n+l}(1)H_{n-1}(t, (g_{m+l}(1))_{m=1}^\infty)}{\la_t^n}-\frac{1}{K(t)}\Biggr)\\
    &=\frac{1}{\widehat F(t)}\Biggl(\sum_{l=1}^{n_0-2}\frac{g_{l+1}(1)H_l(t,(g_m(1))_{m=1}^\infty)}{\la_t^l}\sum_{n=1}^{n_0-l} \frac{g_{n+l}(1)H_{n-1}(t, (g_{m+l}(1))_{m=1}^\infty)}{\la_t^n}+1 \\&\ \ -1+\exp\Bigl(\sum_{i=1}^{n_0}q_i(1)/\la_t^{n_0}\Bigr)\Biggr) \\
    &=\frac{1}{\widehat F(t)}\Biggl(\sum_{l=1}^{n_0-1}\frac{g_{l+1}(1)H_l(t,(g_m(1))_{m=1}^\infty)}{\la_t^l}\sum_{n=1}^{n_0-l} \frac{g_{n+l}(1)H_{n-1}(t, (g_{m+l}(1))_{m=1}^\infty)}{\la_t^n}\Biggr),
\end{align*}
which gives the conclusion.

\end{proof}
 
\end{section}

\begin{section}{Analytic properties of the pressure function $P(t \chi_{C_1})$}
This section is devoted to investigating some properties of the leading eigenvalue $\la_t$ of $\cL_t$ for $t\in\R$, including its analyticity. As we see in the proof of the next proposition, there is the minimal value $c_\be$ such that if $t>c_\be$ then $\La_{\al(t)}=\emptyset$, where  $c_\be=\sup_{\mu\in M(X_\be)}\int_{X_\be}\chi_{C_1} d\mu$. In addition, we show the analyticity of the function $p\in(0,c_\be)\mapsto t(p)\in\R$, where $t(p)\in\R$ is the real number satisfying $\alpha(t(p))=p$. 

\begin{proposition}\label{5-1}
    For $t\in\R$ let $\la_t$ be the leading eigenvalue of $\cL_t$. 
    Then we have the following: 

    (1) The function $t\in\R\mapsto\la_t\in(1,\infty)$ is a strictly increasing function satisfying
    \[\lim_{t\to-\infty}\la_t=1 \ \ \ \ \text{ and }\ \ \ \lim_{t\to\infty}\la_t=\infty.\]

    (2) The function $t\in\R\mapsto\la_t\in(1,\infty)$ can be extended to an analytic function on some region $\C$ including $\R$. In particular, the function $t\in\R\mapsto\la_t\in(1,\infty)$ is real-analytic on $\R$. 

    (3) The pressure $P(t):=P(t\chi_{C_1})$ is equal to $\log \la_t$. Furthermore, $\displaystyle{\alpha(t):=\frac{dP(t)}{dt}}=\int_{X_\be}\chi_{C_1} d\mu_t$, where $\mu_t$ is a unique equilibrium state of $t \chi_{C_1}$. 

    (4) The function $\alpha(t)$ is positive and strictly increasing on $t\in\R$ satisfying
    \[\lim_{t\to-\infty}\al(t)=0,\ \ \lim_{t\to\infty}\al(t)=c_\be:=\sup_{\mu\in M(X_\be)}\int_{X_\be}\chi_{C_1} d\mu. \]
    In addition, we have $\La_\al=\emptyset$ if $\al\notin[0,c_\be]$.

    (5) For $p\in(0, c_\be)$ let $t(p)\in\R$ be the real number satisfying $\alpha(t(p))=p$. Then the function $p\in(0,c_\be)\mapsto t(p)\in\R$ is real-analytic. 

    (6) The function $\al\in(0,c_\be)\mapsto \dim_H\Lambda_\alpha\in[0,1]$ is real analytic and concave.
    \end{proposition}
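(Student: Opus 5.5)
Everything will be read off the implicit equation $\Psi(t,\la):=1-\phi_t(\la^{-1})=0$, which by Proposition \ref{phi_t} characterises $\la_t$ as the inverse of the unique positive zero $\eta_t$. Write $s_n=\sum_{i=1}^{n}q_i(1)$, so that
\[\phi_t(z)=\sum_{n\ge1}q_n(1)e^{ts_{n-1}}z^n .\]

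\textbf{Items (1) and (2).} For (1), $\phi_t(z)$ is strictly increasing in $t$ for fixed $z>0$ (at least two of the $s_{n-1}$ with $q_n=1$ are positive), and strictly increasing in $z$. Hence $\eta_t$ is strictly decreasing and $\la_t$ strictly increasing. As $t\to-\infty$, only $n=1$ survives, $\phi_t(z)\to z$, so $\eta_t\to1$ and $\la_t\to1$. Since $\la_t>r_t\ge1$, the value set lies in $(1,\infty)$. As $t\to\infty$, the term $e^{ts_{n-1}}z^n$ with $s_{n-1}\ge1$ forces $\eta_t\to0$.

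For (2), $\Psi$ is jointly analytic in $(t,\la)$ on a complex neighbourhood of $\{(t,\la_t)\}$, because the series converges for $|\la|>r_t$ and $r_t<\la_t$. I need to make sure $r_{t'}$ stays locally uniformly below $\la_t$. I would bound $r_{t}$ by $\limsup |e^{S_n\phi}|^{1/n}$ and use continuity in $t$. Moreover,
\[\partial_\la\Psi=\sum_n n\,q_ne^{ts_{n-1}}\la^{-n-1}=F(t)/\la_t\neq0 .\]
The holomorphic implicit function theorem then gives the analytic extension.

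\textbf{Item (3).} $P(t)=\log\la_t$ is Proposition \ref{4-2}. Implicit differentiation gives
\[\la_t'=-\frac{\partial_t\Psi}{\partial_\la\Psi}
=\frac{\la_t\sum_n s_{n-1}q_ne^{ts_{n-1}}\la_t^{-n}}{F(t)} .\]
Therefore
\[P'(t)=\frac{\sum_n (s_n-1)q_nH_{n-1}(t,\ud)\la_t^{-n}}{F(t)},\]
using $s_{n-1}=s_n-1$ when $q_n=1$. By Theorem \ref{Main D} this equals $\al(t)$.

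\textbf{Item (4).} Positivity of $\al(t)$ is read from this formula, since $s_n-1\ge1$ occurs for $n\ge2$ with $q_n=1$. For strict monotonicity I would show $P''(t)>0$. Either identify $P''$ with the asymptotic variance of $\chi_{C_1}$ under $\mu_t$, which is positive because $\chi_{C_1}$ is not cohomologous to a constant: the periodic orbits $\overline{0}$ and $\overline{10}$ have different averages. Or use strict convexity of $P$ coming from uniqueness of equilibrium states: if $\al(t_1)=\al(t_2)$, then $\mu_{t_1}$ would be an equilibrium state for $t_2$, contradicting uniqueness unless $t_1=t_2$. I would take the second route, as it needs only Proposition \ref{4-2}.

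The limits come from $P(t)/t\to c_\be$ and $\al(t)\to$ its asymptotic slope by convexity. As $t\to-\infty$, $\al\to\inf_\mu\int\chi_{C_1}\,d\mu=0$, attained by $\delta_{\um}$. Emptiness of $\La_\al$ for $\al\notin[0,c_\be]$ follows because any limit frequency is the integral against some weak-$*$ limit of empirical measures, which are invariant.

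\textbf{Items (5) and (6).} For (5), $t(p)$ is the inverse of the analytic map $\al$ with $\al'=P''>0$, so the analytic inverse function theorem applies. This step requires genuine strict positivity of $P''$, not just monotonicity. So I would establish $P''>0$ via the variance argument, or by noting that $P''$ is analytic and not identically zero while $\al$ is strictly monotone. The latter only gives isolated zeros, so the variance formula, obtained from perturbation of the simple eigenvalue, is the route I would commit to.

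For (6), by Theorem \ref{Main D},
\[\dim_H\La_p=\frac{P(t(p))-t(p)p}{\log\be},\]
a Legendre transform. It is analytic by (5), and its derivative in $p$ is $-t(p)/\log\be$, which is decreasing, so the function is concave.

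\textbf{Main obstacle.} I expect the hardest part to be the uniform control $r_{t'}<|\la_{t'}|$ near $t$, needed for analyticity, together with the strict positivity of $P''$.
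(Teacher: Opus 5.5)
\textbf{Items (1), (2) and the alternative routes.} Items (1) and (2) are essentially the paper's argument: monotonicity of the coefficients $q_n(1)H_{n-1}(t,\ud)$ in $t$, and the holomorphic implicit function theorem with $\partial_\la\Psi=F(t)/\la_t\neq0$.

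For (3), (4) and (6) you take different but valid routes. The paper gets $P'(t)=\int\chi_{C_1}\,d\mu_t$ from the subgradient inequality $P(t)-P(t_0)\ge(t-t_0)\int\chi_{C_1}\,d\mu_{t_0}$ (variational principle), combined with differentiability from (2). You instead differentiate the implicit equation and match the result with (\ref{alphat}) in Theorem \ref{Main D}. This gives you positivity of $\al(t)$ directly. The paper infers positivity from strict monotonicity of $P$, which on its own only gives $\al\ge0$.

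For the limits of $\al$, the emptiness of $\La_\al$ and the concavity, the paper appeals to Corollary 2.9 of Climenhaga's work. Your arguments are self-contained: $P(t)/t\to c_\be$ together with convexity; weak-$*$ limits of empirical measures tested on the continuous function $\chi_{C_1}$; and $\frac{d}{dp}\bigl(P(t(p))-p\,t(p)\bigr)=-t(p)$.

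\textbf{The step that fails.} The route you pick for strict monotonicity in (4) does not work as written. Suppose $\al(t_1)=\al(t_2)$ with $t_1<t_2$. Convexity makes $P$ affine on $[t_1,t_2]$, so $\mu_{t_1}$ is an equilibrium state for $t_2\chi_{C_1}$. But uniqueness only yields $\mu_{t_1}=\mu_{t_2}$. That contradicts nothing unless you separately know that $t\mapsto\mu_t$ is injective. The paper closes this point with analyticity: if $\al$ is constant on an interval, it is constant on $\R$, so $\la_t=e^{at+b}$, which is incompatible with (1).

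\textbf{Item (5).} Here you are right, and more careful than the paper. Its proof of (5) uses only that $\al$ is analytic and strictly increasing. That does not exclude $\al'(t_0)=0$ (think of $t^3$), so the inverse function theorem is not justified there.

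Your variance route can be completed, but it needs three things: analytic perturbation of $\la_t$ on $\cF$, a coboundary equation with a BV transfer function, and evaluation at periodic points through one-sided limits. Also, $\overline{10}\notin X_\be$ for $\be<(1+\sqrt5)/2$ (e.g.\ $\ud=\overline{100}$). The fixed point $\um$ together with $\al(t)>0$ suffices instead.

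A much shorter argument stays inside your own framework. Let $n_1<n_2<\cdots$ be the positions of the $1$'s in $\ud$ (so $n_1=1$), and set $w_k=e^{t(k-1)}\la_t^{-n_k}$. Then $\sum_kw_k=1$, $\sum_kn_kw_k=F(t)$, and $\al(t)=F(t)^{-1}\sum_k(k-1)w_k$. Differentiate $\sum_ke^{t(k-1)-P(t)n_k}=1$ twice in $t$. Term-by-term differentiation is legitimate because $\la_t^{-1}$ lies strictly inside the disc of convergence, locally uniformly in $t$. This gives
\[
P''(t)=\frac{1}{F(t)}\sum_{k\ge1}w_k\bigl((k-1)-\al(t)\,n_k\bigr)^2 .
\]
If this vanished, then $k-1=\al(t)n_k$ for every $k$. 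Taking $k=1$ forces $\al(t)=0$, and then $k=2$ gives $1=0$. So $P''>0$ everywhere. This gives both the strict monotonicity in (4) and the nonvanishing derivative needed in (5).
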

\begin{proof}
    (1) By Proposition \ref{phi_t} (3), we know that $\la_t$ is the inverse of a unique positive zero of $1-\phi_t(z)$, where $\phi_t(z)$ is the power series as defined in (\ref{power series}). Each coefficient of $\phi_t(z)$ is $q_n(1) H_{n-1}(t,\ud)\geq0$ for $n\geq1$. By definition $H_{n-1}(t,\ud)<H_{n-1}(t',\ud)$ for $t,t'\in\R$ with $t<t'$ if $n>1$. Since there are infinitely many $n$ such that $q_n(1)=1$, we have $\phi_t(r)<\phi_{t'}(r)$ for $r\in[0,r_t^{-1})$. This shows that $1/\la_{t'}<1/\la_{t}$ for $t,t'\in\R$ with $t<t'$, which yields $\la_t<\la_{t'}$.

    We show that $\lim_{t\to -\infty}\la_t=1$. Since 
    $q_n (1)H_{n-1}(t,\ud)>0$ for $n\geq1$ such that $q_n(1)=1$, we obtain 
    \[1=\phi_t(\la_t^{-1})=\frac{1}{\la_t}+\sum_{n=2}^\infty \frac{q_n(1) H_{n-1}(t,\ud)}{\la_t^n}>\frac{1}{\la_t},\] which gives $\la_t>1$ for $t\in\R$. Let $t<0$. Since $e^t<1$ and $\la_t>1$ we have
    \[1=\phi_t(\la_t^{-1})\leq\frac{1}{\la_t}+\frac{e^t}{\la_t^2}+\cdots+\frac{e^{(n-1)t}}{\la_t^n}+\cdots\leq \frac{1}{\la_t-e^t}.\]
    This gives $\la_t-e^t\leq1$, which yields $\la_t\leq 1+e^t$. Hence we obtain 
    \[1\leq \lim_{t\to-\infty}\la_t\leq\lim_{t\to-\infty}(1+e^t)=1,\]
    as desired. 

    The fact that $\lim_{t\to \infty}\la_t=\infty$ follows from the inequality
    \[1=\phi_t(\la_t^{-1})\geq \frac{1}{\la_t}+\frac{e^t}{\la_t^k},\]
    where $k$ is the minimal integer greater than $1$ with $q_k(1)=1$, which yields 
    \[\la_t^k\geq\la_t^{k-1}+e^t>e^t\to\infty\]
     as $t\to\infty$.

    (2) We define the formal power series $\phi(w,z)$ by
    \[\phi(w,z)=\sum_{n=1}^\infty q_n(1) H_{n-1}(w,\ud)z^n,\]
    where
    \[H_{n}(w,\ud)=\begin{cases}
        \exp\Bigl(\sum_{i=1}^{n}q_i(1)\cdot w\Bigr), & (n\geq1) \\
        1.&(n=0)
    \end{cases}\]
We note that
\[\Biggl|\exp\Bigl(\sum_{i=1}^{n}q_i(1)\cdot w\Bigr)\Biggr|=\exp\Bigl(\sum_{i=1}^{n}q_i(1)\cdot Re(w)\Bigr)\]
for $w\in\C$. Hence the power series $\phi(w,z)$ converges absolutely in 
\[R=\Biggl\{(\omega,z)\in\C^2; |z|<\exp\Bigl(-\limsup_{n\to\infty}\frac{1}{n}\sum_{i=1}^nq_i(1) \cdot Re(w)\Bigr)\Biggr\}.\]
Since $\phi(w,z)$ is continuous on $R$ and analytic in each variable $w,z$ separately, we have that $\phi(w,z)$ is analytic on $R$ by Osgood's lemma (see e.g., \cite{Gu-Ro}). 

Set $\Phi(w,z)=1-\phi(w,z)$ for $(w,z)\in R$. Then $\Phi(t,\la_t^{-1})=0$ for any $t\in\R$ and 
\[\frac{\partial\Phi(w,z)}{\partial z}\Biggl|_{(w,z)=(t,\la_t^{-1})}=-\sum_{n=1}^\infty \frac{nq_n(1)H_{n-1}(t,\ud)}{\la_t^{n-1}}<0.\]
By the holomorphic implicit function theorem, there are an open region $U_t\times V_t\ni(t,\la_t^{-1})$ and a holomorphic function $\gamma_t:U_t\to\C$ with $1-\phi(z,\gamma_t(z))=0$ and $|\gamma_t(z)|\neq0$ for $z\in U_t$. Define $\Gamma:\cup_{t\in\R}U_t\to\C$ by $\Gamma(z)=1/\gamma_t(z)$ for $z\in U_t$. We note that this holomorphic function is well-defined when $z\in U_t\cap U_{t'}$ for $t\neq t'$ by the identity theorem. By the definition of the function $\Gamma$, we have $\Gamma(t)=\la_t$ for $t\in\R$, which gives the desired function. 

(3) The fact that the pressure is given by $P(t)=P(t\chi_{C_1})=\log\la_t$ for $t\in\R$ follows from Proposition \ref{4-2}. Let $t_0\in\R$. Then the variational principle yields 
\begin{align*}
    P(t)&=\sup_{\mu\in M(X_\be)}\Bigl\{h(\mu)+t_0\int_{X_\be}\chi_{C_1}d\mu +(t-t_0)\int_{X_\be}\chi_{C_1}d\mu \Bigr\} \\
    &\geq h(\mu_{t_0})+t_0\int_{X_\be}\chi_{C_1}d\mu_{t_0} +(t-t_0)\int_{X_\be}\chi_{C_1}d\mu_{t_0} \\
    &=P(t_0)+(t-t_0)\int_{X_\be}\chi_{C_1}d\mu_{t_0}
\end{align*}
for $t\in\R$. Then we have
\[P(t)-P(t_0)\geq (t-t_0)\int_{X_\be}\chi_{C_1}d\mu_{t_0}.\]
This shows that 
\[\frac{P(t)-P(t_0)}{t-t_0}\geq\int_{X_\be}\chi_{C_1}d\mu_{t_0}\]
if $t>t_0$ and 
\[\frac{P(t)-P(t_0)}{t-t_0}\leq\int_{X_\be}\chi_{C_1}d\mu_{t_0}\]
if $t<t_0$. Since $P(t)$ is differentiable at $t_0$ by Item (2), we obtain the desired result. 

(4) Note that $\al(t)$ is the derivative of the pressure function $P(t)$ by Item (3). The fact that $\al(t)$ is strictly positive follows from that $P(t)=\log \la_t$ is strictly increasing by Item (1). Since $P(t)$ is real-analytic by Item (2) so is $\al(t)$. The pressure $P(t)$ is a convex function (see e.g., \cite[Theorem 9.7(v)]{Wa}), which yields $\al'(t)\geq0$. Assume that $\al(t)$ is not strictly increasing, i.e., there are $t, t'\in\R$ with $t<t'$ such that $\al(t)=\al(t')$. Then by Item (2) and the identity theorem, $\al(t)$ is a constant function, which yields that $P(t)$ is a linear function of the form $at+b$, where $a,b\in\R$. Hence $\la_t=e^{at+b}$ for $t\in\R$, which contradicts Item (1). 

We have $\lim_{t\to-\infty}\al(t)=0$, $\lim_{t\to\infty}\al(t)=c_\be$ and $\La_\al=\emptyset$ for $\al\notin[0,c_\be]$ by an application of \cite[Corollary 2.9]{Cl}, which states that 
\[\Bigl\{\int_{X_\be}\chi_{C_1} d\mu \ ; \mu\in M(X_\be) \Bigr\}=[\lim_{t\to-\infty}\al(t), \lim_{t\to\infty}\al(t)]\]
and $\La_\al=\emptyset$ for $\al\notin[\lim_{t\to-\infty}\al(t), \lim_{t\to\infty}\al(t)]$. 
Since $\int_{X_\be}\chi_{C_1}d\delta_{\um}=0$ for the Dirac measure $\delta_{\um}$ at $\um$, we obtain 
\[\lim_{t\to-\infty}\al(t)=\int_{X_\be}\chi_{C_1}d\delta_{\um}=0\text{ and }\lim_{t\to\infty}\al(t)=\sup_{\mu\in M(X_\be)}\int_{X_\be}\chi_{C_1}d\mu.\]

(5) Since the function $p\in(0,c_\be)\mapsto t(p)\in(-\infty,\infty)$ is the inverse function of $\al(t)$ and $\al(t)$ can be extended to some analytic function which is  strictly increasing on $(-\infty,\infty)$ by Item (2), we have the conclusion by the holomorphic inverse function theorem. 

(6) By Items (3) and (5), together with Theorem \ref{Main D}, the function $\alpha\in(0,c_\be)\mapsto \dim\Lambda_\al\in[0,1]$ is given by a linear sum of real-analytic functions, which ensures that it is real-analytic. 
By Corollary 2.9 in \cite{Cl}, it is given by the minus of the Legendre transform of the pressure function $P(t)$. Since $P(t)$ is convex, we have that it is concave.
\end{proof}

In the following, we give upper and lower bounds for $c_\be$, whose value is given explicitly in the case where $\be$ satisfies $\be^{M+1}-\be^{M}-\dots-1=0$ or $\be^{M+1}-\be^M-1=0$ for some positive integer $M\geq1$.

\begin{theorem}\label{5-2} Let $1<\be<2$. 
    For the constant $\displaystyle c_\be=\sup_{\mu\in M(X_\be)}\int_{X_\be}\chi_{C_1} d\mu$, we have the following:

    (1) The constant $c_\be$ is non-decreasing for $\be\in(1,2)$ as a function of $\be$.

    (2) Let $\ud$ be the coefficient sequence of the quasi-greedy expansion of $1$. We denote $\ud$ by
    \[\ud=\overline{1}^{N}\overline{0}^{M}1\dots,\] where $N$, $M$ are positive integers and $\overline{a}^{k}$ denotes the word $\underbrace{a\dots a}_{k-times}$ for integers $k\geq1$ and  $a\geq0$. Then we have
    \[\frac{N-1}{N}\leq c_\be\leq\frac{N}{N+1}\]
    if $N\geq2$ and
    \[\frac{1}{M+2}\leq c_\be\leq\frac{1}{M+1}\]
    if $N=1$.

    (3) If $\be$ is a multinacci number, i.e., there is a positive integer $N$ such that 
    $\be^{N+1}-\be^N-\dots-1=0$ then $\ud=\overline{\overline{1}^{N}0}^\infty$ and $c_\be=N/(N+1)$. 

    (4) If $\be$ is a Parry number satisfying $\be^{M+1}-\be^M-1=0$ for some positive integer $M$ then $\ud=\overline{1\overline{0}^{M}}^\infty$ and $c_\be=1/(M+1)$. 
    \end{theorem}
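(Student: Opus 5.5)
The plan is to reduce everything to the combinatorics of the admissibility criterion in Proposition \ref{Parry1}. The key observation is that
\[c_\be=\lim_{n\to\infty}\frac{1}{n}\max\Bigl\{\sum_{i=1}^n x_i;\ \ux\in X_\be\Bigr\}.\]
The $\ge$ inequality comes from periodic points and the $\le$ inequality from Birkhoff averages. Equivalently, $c_\be$ is the supremum of $\int\chi_{C_1}\,d\mu$ over ergodic $\mu$, and ergodic measures are approximated by periodic orbit measures once we have the specification-type property coming from full cylinders.

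For (1), note that $\be\mapsto\ud$ is monotone increasing in the lexicographic order (a standard fact from Parry). By Proposition \ref{Parry1}, $\be<\be'$ then gives $X_\be\subset X_{\be'}$, so $M(X_\be)\subset M(X_{\be'})$ and hence $c_\be\le c_{\be'}$.

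For (2) with $N\ge2$:
\begin{itemize}
\item \emph{Lower bound.} Every shift of the periodic point $\overline{\overline{1}^{N-1}0}^\infty$ is $\preceq\ud=\overline{1}^N0\dots$, because a block of $N-1$ ones followed by $0$ is smaller than $\overline 1^N$. So this point lies in $X_\be$, and its orbit measure gives $\int\chi_{C_1}\,d\mu=(N-1)/N$.
\item \emph{Upper bound.} Every $\ux\in X_\be$ has no run of $N+1$ consecutive ones, since such a run would give some shift starting with $1^{N+1}\succ\ud$. So among any $N+1$ consecutive digits at least one is $0$. Birkhoff averages then give $\int\chi_{C_1}\,d\mu\le N/(N+1)$ for every ergodic $\mu$.
\end{itemize}

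For (2) with $N=1$, we have $\ud=10^M1\dots$:
\begin{itemize}
\item \emph{Upper bound.} In any $\ux\in X_\be$, each $1$ must be followed by at least $M$ zeros; otherwise the shift starting at that $1$ is $\succ\ud$. So the gaps between consecutive ones are at least $M$, and the frequency of ones is at most $1/(M+1)$.
\item \emph{Lower bound.} Take the periodic point $\overline{10^{M+1}}^\infty$. Its shifts beginning with $1$ read $10^{M+1}1\dots\prec 10^M1\dots$, so the point is admissible and has frequency $1/(M+2)$.
\end{itemize}

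For (3), one computes $\ud$ from the quasi-greedy expansion. If $\be^{N+1}=\be^N+\dots+1$, then $1=\sum_{i=1}^{N+1}\be^{-i}$, so the greedy expansion is $1^{N+1}0^\infty$ and the quasi-greedy expansion is $\ud=\overline{1^N0}^\infty$. The upper bound of (2) gives $c_\be\le N/(N+1)$. The point $\ud$ itself lies in $X_\be$ and is periodic with frequency $N/(N+1)$, so equality holds.

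For (4), the relation $\be^{M+1}=\be^M+1$ gives $1=\be^{-1}+\be^{-(M+1)}$. The greedy expansion is then $10^{M-1}10^\infty$, and the quasi-greedy expansion is $\ud=\overline{10^M}^\infty$. Hence $c_\be\le 1/(M+1)$ by the gap argument of (2), and $\ud$ itself is a periodic point achieving $1/(M+1)$.

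The main subtlety I expect is in the upper bounds. One has to check carefully that the forbidden-pattern argument really follows from $\sigma^n(\ux)\preceq\ud$ in the edge cases of the quasi-greedy expansion, in particular when $\ud$ begins with exactly $N$ ones. One also has to justify passing from pointwise digit bounds to a bound on $\int\chi_{C_1}\,d\mu$. That passage is routine via the ergodic decomposition and Birkhoff's theorem.
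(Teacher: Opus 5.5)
Your proposal is correct and follows the paper's proof essentially step for step. For (1) you use the inclusion $X_\be\subset X_{\be'}$. For the lower bounds in (2) you use the periodic points $\overline{\overline{1}^{N-1}0}^\infty$ and $\overline{1\overline{0}^{M+1}}^\infty$. For the upper bounds you use the forbidden blocks $\overline{1}^{N+1}$ and $1\overline{0}^{j}1$ with $j<M$, combined with Birkhoff averages; the paper phrases this as a contradiction via pigeonhole, which amounts to the same thing. In (3) and (4) you take $\ud$ itself as the extremal periodic orbit.

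The only blemish is your opening ``key observation'', which you never actually use. Its justification via a specification property is not available in general, since Lebesgue-almost every beta-shift lacks specification. The identity itself holds anyway, by the standard empirical-measure argument for the continuous function $\chi_{C_1}$.
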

\begin{proof}
    (1) We note that $X_\be\subset X_{\be^+}$ for $1<\be<\be^+\leq2$ (see e.g., \cite{It-Ta}). Then the set of $\sigma$-invariant probability measures on $X_\be$ is embedded to that on $X_{\be^+}$, which provides $c_\be\leq c_{\be^+}$.

    (2) The periodic point $\ux'=\overline{\overline{1}^{N-1}0}^\infty$ satisfies $\sigma^n\ux'\preceq\ud$ for all $n\geq0$. This shows that $\ux'\in X_\be$ by Proposition \ref{Parry1}. Hence the periodic measure $\displaystyle \mu'=\frac{1}{N}\sum_{i=0}^{N-1}\delta_{\sigma^i\ux'}$ is $\sigma$-invariant satisfying $\displaystyle\int_{X_\be}\chi_{C_1} d\mu'=\frac{N-1}{N}$, which yields that $c_\be\geq (N-1)/N$.

    We shall show that $c_\be\leq N/(N+1)$. Assume that there is a $\sigma$-invariant probability measure $\mu''$ such that $\displaystyle \int_{X_\be} \chi_{C_1} d\mu''>N/(N+1)$. By the Birkhoff ergodic theorem, we have
    \[\frac{1}{n}\sum_{i=1}^n x_i\to \chi^*_1(\ux)\]
    for $\mu''$-a.e. $\ux=(x_i)_{i=1}^\infty\in X_\be$ as $n\to\infty$ , where $\chi_1^*$ is a $\sigma$-invariant function whose integral by $\mu''$ on $X_\be$ is equal to that of $\chi_{C_1}$. Hence there are $\ux''=(x_i'')_{i=1}^\infty\in X_\be$ and a positive integer $k$ such that
    \[\frac{1}{k(N+1)}\sum_{i=1}^{k(N+1)} x_i''>\frac{N}{N+1},\]
     which yields 
    \[\sum_{i=1}^{k(N+1)} x_i''>kN.\]
    By the pigeon hole principle, there is a non-negative integer $k_0$ such that \[x_{k_0(N+1)+1}''\dots x_{(k_0+1)(N+1)}''=\overline{1}^{N+1}.\] This shows that $\sigma^{k_0(N+1)}\ux''\succ\ud$, 
    which contradicts $\sigma^n\ux''\preceq\ud$ for any $n\geq0$.

    Let $N=1$. Since the periodic point $\uy^{'}=\overline{1\overline{0}^{M+1}}^\infty$satisfies $\sigma^n\uy'\preceq\ud$ for all $n\geq0$, we have $\uy'\in X_\be$. The periodic measure $\displaystyle m'=\frac{1}{M+2}\sum_{i=0}^{M+1}\delta_{\sigma^i\uy'}$ satisfies $\displaystyle\int_{X_\be}\chi_{C_1} dm'=\frac{1}{M+2}$, which yields that $c_\be\geq 1/(M+2)$. 

    We shall show that $c_\be\leq 1/(M+1)$. Assume that there is a $\sigma$-invariant probability measure $m''$ such that  $\displaystyle \int_{X_\be} \chi_{C_1} dm''>1/(M+1)$. Then as in the same way of the case $N\geq2$, we have that there are $\uy''=(y''_i)_{i=1}^\infty\in X_\be$ and a positive integer $l\geq1$ such that 
    \[\frac{1}{l(M+1)}\sum_{i=1}^{l(M+1)} y_i''>\frac{1}{M+1},\] 
    which yields that there is a non-negative integer $l_0$ such that 
    \[\sum_{i=l_0(M+1)+1}^{(l_0+1)(M+1)}y_i''\geq2,\]by the pigeon hole principle. This shows that the word $y_{l_0(M+1)+1}''\dots y_{(l_0+1)(M+1)}''$ includes the consecutive $0$'s whose length is at most $M-1$, which contradicts $\sigma^n\uy''\preceq \ud$ for all $n\geq0$. This finishes the proof. 

    (3) Since $\be$ satisfies
    \[1=\Biggl(\frac{1}{\be}+\dots+\frac{1}{\be^N}+\frac{0}{\be^{N+1}}\Biggr)\Biggl(1+\frac{1}{\be^{N+1}}+\frac{1}{\be^{2(N+1)}}+\dots\Biggr),\]
    we have that $\ud=\overline{\overline{1}^{N}0}^\infty$. By Item (2), we know that $(N-1)/N\leq c_\be \leq N/(N+1)$. Since $\ud$ is periodic, the periodic measure $\displaystyle \hat\mu:=\frac{1}{N+1}\sum_{i=0}^N\delta_{\sigma^i\ud}$
    is $\sigma$-invariant and satisfies $\int_{X_\be}\chi_{C_1} d\hat\mu=N/(N+1)$, which ensures that $c_\be=N/(N+1)$. 

    (4) Since $\be$ satisfies
    \[1=\Biggl(\frac{1}{\be}+\frac{0}{\be^2}+\frac{0}{\be^{M+1}}\Biggr)\Biggl(1+\frac{1}{\be^{M+1}}+\frac{1}{\be^{2(M+1)}}+\dots\Biggr),\]
    we have that $\ud=\overline{1\overline{0}^{M}}^\infty$. By Item (2), we know that $1/(M+2)\leq c_\be \leq 1/(M+1)$. Since $\ud$ is periodic, the periodic measure $\displaystyle \bar\mu:=\frac{1}{M+1}\sum_{i=0}^M\delta_{\sigma^i\ud}$
    is $\sigma$-invariant and satisfies $\int_{X_\be}\chi_{C_1} d\bar\mu=1/(M+1)$, which ensures that $c_\be=1/(M+1)$. 
\end{proof}

\begin{remark}
    The question of whether the function $\be\mapsto c_\be$ is continuous or not remains open. An affirmative answer would imply the existence of uncountably many $\be\in(1,2)$ for which an invariant probability measure attaining the supremum $\displaystyle c_\be=\sup_{\mu\in M(X_\be)}\int_{X_\be}\chi_{C_1} d\mu$ is non-periodic, i.e., it is not represented as a finite convex sum of periodic measures. This is in contrast to the typical situation for topologically mixing subshifts of finite type (see,  e.g., \cite{Je}). In fact, for $\ux\in X_\be$, the value $\displaystyle{\frac{1}{n}\sum_{i=0}^{n-1}\chi_{C_1}(\sigma^i\ux)}$ is rational, which yields that if the supremum is attained by a periodic measure then $c_\be$ is rational. This shows that if $c_\be$ is irrational, any probability measure attaining the supremum must be non-periodic.
\end{remark}


\end{section}

\begin{section}{A generalization of the Hata-{Yamaguti} formula}

In this section, we show a version of the Hata-{Yamaguti} formula (\cite[Theorem 4.6]{Ha-Ya}) for beta-expansions in the case of $1<\be<2$.

Let $1<\be\leq 2$ and $t\in\R$. Denote by $\nu_t$ the probability  measure on $X_\be$ as in Theorem \ref{Main C}(2).
Set $D_t(x)=\nu_t(\pi^{-1}_\be([0,x]))$ for $x\in[0,1]$, where $\pi_\be:X_\be\to [0,1]$ is the surjective map defined by 
\[\pi_\be((a_i)_{i=1}^\infty)=\sum_{i=1}^\infty\frac{a_i}{\be^i}\]
for $(a_i)_{i=1}^\infty\in X_\be$. By Proposition \ref{3-4} (1) and Theorem \ref{Main C} (1), we obtain
\[D_t(x)=
    \sum_{n=1}^\infty \frac{g_n(x)L_{n-1}(t,x)}{\la^n_t}
\]
for $x\in[0,1]$, where $\la_t$ is a unique leading eigenvalue of $\cL_t$ and 
\[L_n(t,x)=
H_n(t,(g_i(x))_{i=1}^\infty)=\begin{cases}
    1 & \text{ if } n=0 \\
    \exp\Biggl(\sum_{i=1}^n g_i(x)\cdot t\Biggr)& \text{ if } n\geq1
\end{cases}\]
for $x\in[0,1]$ and $n\geq0$. 
For $p \in(0,1)$ let $f(p)\in\R$ be the inverse function of $1/\la_t$ for $t\in\R$, i.e., the function satisfying $p=1/\la_{f(p)}$ for $p\in(0,1)$. We note that it is well-defined since $\la_t$ is strictly increasing and continuous by Proposition \ref{5-1} (1) and (2). The inverse function theorem and Proposition \ref{5-1} (2) yield that it can be extended to an analytic function on some region $\C$ including $(0,1)$. In the following, we set $F_p(x)=D_{f(p)}(x)$ for $p\in(0,1)$ and $x\in[0,1]$.
Since $p=1/\la_{f(p)}$ for $p\in(0,1)$, we have
\begin{align*}
    F_p(x)&=\sum_{n=1}^\infty g_n(x)p^n L_{n-1}(f(p),x) \\
          &=\frac{1}{e^{f(p)}} \sum_{n=1}^\infty g_n(x)p^{n-\sum_{i=1}^n g_i(x)}(pe^{f(p)})^{\sum_{i=1}^ng_i(x)}
    \end{align*}
for $x\in[0,1]$. 
In the case of $\be=2$, we can see that the function $F_p(x)$ is equal to the Lebesgue singular function as follows. In this case,   
\[\phi_t(\la_t^{-1})
=\sum_{n=1}^\infty\frac{H_{n-1}(t,\overline{1}^{\infty})}{\la_t^n}
=\sum_{n=1}^\infty\frac{e^{(n-1)t}}{\la_t^n}=\frac{1}{\la_t-e^t}
\]
for $t\in\R$ by the definition of the power series $\phi_t$. Then the equation $\phi_t(\la_t^{-1})=1$ yields $\la_t=e^t+1$. Since $p=1/\la_{f(p)}$, the function $f(p)$ satisfies $e^{f(p)}=(1-p)/p$. 
Hence  
\[\begin{split}
F_p(x)
&=\frac{1}{e^{f(p)}} \sum_{n=1}^\infty g_n(x)p^{n-\sum_{i=1}^n g_i(x)}(pe^{f(p)})^{\sum_{i=1}^ng_i(x)} \\
&=\frac{p}{1-p}\sum_{n=1}^\infty g_n(x)p^{n-\sum_{i=1}^n g_i(x)} (1-p)^{\sum_{i=1}^n g_i(x)},
\end{split}
\]
which is the Lebesgue singular function (see e.g., \cite{Al-Ka, Ha-Ya}). The Hata-Yamaguch formula \cite[Theorem 4.6]{Ha-Ya} shows that for $\be=2$ the function $F_p(x)$ is differentiable as a function of $p\in(0,1)$ at each $x\in[0,1]$ and
\[\frac{1}{2}\frac{\partial F_p(x)}{\partial p}\Biggr|_{p=\frac{1}{2}}=\mathcal{T}(x),\]
where $\mathcal{T}(x)$ is the Takagi function
defined by 
\[\mathcal{T}(x)=\sum_{n=1}^\infty\frac{T^n(x)}{2^n}\] for $x\in[0,1]$. Here, $T(x)$ is the tent map $T(x)=1-|1-2x|$. The function $\mathcal{T}$ is well-known as a continuous but nowhere differentiable function on $[0,1]$. 
Our main result of this section is an analogue of this result to the beta-map $\tau_\be$ for $1<\be<2$. 

\begin{theorem}\label{Main E}
There is an open neighborhood $U\subset (0,1)$ of $1/\be$ such that for each $x\in[0,1]$ the map $p\in U\mapsto F_p(x)\in[0,1]$ is real-analytic. Set  
    \[G(x)=\frac{1}{\be}\frac{\partial F_p(x)}{\partial p}\Biggr|_{p=\frac{1}{\be}}\]
for $x\in[0,1]$. Then the function $G$ is continuous but nowhere differentiable on $[0,1]$.    
\end{theorem}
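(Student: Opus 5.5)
We use the explicit series $F_p(x)=\sum_{n\ge1} g_n(x)p^nL_{n-1}(f(p),x)$ and regard it as a power series in $p$ whose coefficients depend on $x$. At $p=1/\be$ we have $\la_{f(p)}=\be$, so $f(1/\be)=0$: for $t=0$ the equation $\phi_0(z)=\sum q_n(1)z^n=1$ has the positive root $z=1/\be$. Hence $F_{1/\be}(x)=\sum g_n(x)\be^{-n}=x$.

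\textbf{Real-analyticity.} By Proposition \ref{5-1} (2) and the holomorphic inverse function theorem, $f$ extends holomorphically to a complex neighborhood $V$ of $1/\be$ with $f(1/\be)=0$. Shrink $V$ so that $|p|e^{|\mathrm{Re}\,f(p)|}\le\rho<1$ on $V$; this is possible since the product equals $1/\be<1$ at $p=1/\be$. Each term $g_n(x)p^nL_{n-1}(f(p),x)$ is holomorphic on $V$ and bounded by $|p|^n e^{(n-1)|\mathrm{Re} f(p)|}\le\rho^n$. The Weierstrass M-test then shows that $p\mapsto F_p(x)$ is holomorphic on $V$, uniformly in $x$, and real-analytic on $U=V\cap\R$.

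\textbf{Formula for $G$.} Differentiate termwise at $p=1/\be$, writing $s_n(x)=\sum_{i=1}^n g_i(x)$:
\[
\frac{\partial}{\partial p}\Bigl(p^n e^{f(p)s_{n-1}(x)}\Bigr)\Bigr|_{p=1/\be}=n\be^{-(n-1)}+f'(1/\be)\,s_{n-1}(x)\be^{-n}.
\]
This gives
\[
G(x)=\sum_{n\ge1}g_n(x)\be^{-n}\bigl(n+c\,s_{n-1}(x)\bigr),\qquad c:=f'(1/\be)/\be.
\]
Implicit differentiation of $\phi_{f(p)}(p)=1$ yields $c=-\sum_n nq_n\be^{-n}\big/\sum_n (s_{n-1}(1))q_n\be^{-n}$, where $s_{n-1}(1)=\sum_{i<n}q_i$. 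This is negative and nonzero, and it should normalize $G(1)=0$, consistent with $G(0)=G(1)=0$ since $F_p(0)=0$ and $F_p(1)=1$ for all $p$.

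\textbf{Continuity.} The uniform convergence above passes to the $p$-derivative by the Cauchy estimates, uniformly in $x$. Each $F_p$ is continuous in $x$ because it is a distribution function of the non-atomic measure of Proposition \ref{3-4} (3). Hence $G=\frac{1}{\be}\partial_pF_p|_{p=1/\be}$ is a uniform limit of continuous functions and is continuous. Alternatively, one can continue the partial sums across the discontinuities of $g_n$ using Proposition \ref{3-4} (1), applied to the derivative in $\la$.

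\textbf{Nowhere differentiability.} This is the main obstacle. We would derive a self-similarity relation from the eigen-relation of Lemma \ref{key lemma}. By Theorem \ref{Main A}, the expansion satisfies
\[
D_t(x)=\la_t^{-1}\bigl(g_1(x)+l(g_1(x))\,D_t(\tau x)\bigr).
\]
Differentiating in $p$ gives
\[
G(x)=\be^{-1}G(\tau x)+\psi(x),
\]
where $\psi$ is explicit: up to constants it is affine in $x$ on each branch, plus a piecewise linear term coming from $c\,g_1(x)x$. Iterating gives $G(x)=\sum_k \be^{-k}\psi(\tau^kx)$, a Takagi-type series whose ratio $\be^{-1}$ matches the expansion rate $\be$.

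We would then follow the standard Takagi argument. Take nested cylinder intervals $I_n(x)\ni x$ of length about $\be^{-n}$, and compare increments of $G$ over the endpoints with the distance between them, or over points $x\pm\be^{-n}$. Each level of $\psi\circ\tau^k$ contributes a slope of order $c$ with a sign depending on the digit. As in the proof for the Takagi function, the difference quotients form a sequence whose consecutive terms differ by a quantity bounded away from zero, namely of order $|c|$. The difficulty is that for $\be<2$ the cylinders are not all full and have unequal lengths. We would handle this by restricting to $n$ with $I_n(x)$ a full cylinder, which happens infinitely often or with bounded gaps via the admissibility condition of Proposition \ref{Parry1}, together with a bounded-distortion comparison to control these lengths.
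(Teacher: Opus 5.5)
Your analyticity argument and your series for $G$ agree with the paper. Your $c=f'(1/\be)/\be$ equals $-1/m_\be([1/\be,1])$, as Theorem~\ref{Main D} at $t=0$ shows. Your continuity argument is a valid alternative route. For real $h$, the quotients $(F_{1/\be+h}-F_{1/\be})/(\be h)$ are continuous in $x$ because each $F_p=D_{f(p)}$ is, and the Cauchy estimates on $\partial_p^2F_p$ make them converge to $G$ uniformly in $x$. You should say explicitly that these quotients are the continuous approximants, not the partial sums in $n$, which are only right-continuous. This avoids the paper's pairing of right-continuous greedy partial sums with left-continuous quasi-greedy ones through $G=H$ in Lemma~\ref{6-3}(2).

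The genuine gap is nowhere differentiability, where you give a plan whose key claims fail.

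\begin{itemize}
\item \textbf{Slope increments are not bounded away from zero.} Using $G(1)=0$, the chord slope of $G$ over a full $n$-cylinder containing $x$ is exactly $n+c\,s_n(x)$. For $\be<2$, every cylinder ending in the digit $1$ has image contained in $[\um,\sigma(\ud)]\neq X_\be$, so the digit at every full level is $0$. Between consecutive full levels the slope therefore changes by exactly $1$ when the levels are adjacent. When the next full level is $L\geq 2$ steps further, the intervening digits contain $k\geq1$ ones and the slope changes by $L+ck$. Since $c<-1$, $L+ck$ has indefinite sign, and nothing in your argument keeps these increments away from $0$.
\item \textbf{Full levels need not recur at all.} If $\be$ is non-simple, $\ud$ is not purely periodic, so $\sigma^n(\ud)\prec\ud$ for all $n\geq1$ and no cylinder $[q_1\dots q_n]_1^n$ is full. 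At $x=1$, where only left approximations exist, your scheme produces nothing. Concatenating ever longer prefixes of $\ud$ also gives interior points with unbounded gaps between full levels.
\item \textbf{Bounded distortion cannot replace fullness.} A non-full $n$-cylinder has length $\be^{-n}\tau^j(1)$, possibly far smaller than $\be^{-n}$. Its right endpoint also adds a non-vanishing term $\be^{-n}G(\tau^j(1))$ to the increment.
\end{itemize}

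The missing idea is the paper's device of avoiding cylinders altogether. For $x\in(0,1]$, use the quasi-greedy digits $q_n(x)$, infinitely many of which equal $1$. Let $l(N)$ be the position of the $N$-th one and $x_N=\sum_{n\leq l(N)}q_n(x)\be^{-n}$.

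Consider the points $x_N+\be^{-l(N+1)}$ and $x_N+\be^{-l(N+1)-1}$. Their digit strings come from $(q_n(x))$ by truncating, and for the second point by moving the last $1$ one place to the right. These strings end in zeros and every shift is strictly below $\ud$, so they are greedy expansions. By Lemma~\ref{6-2}, their $G$-increments over $x_N$ are single terms of the series. This gives chord slopes $l(N+1)+cN$ and $l(N+1)+1+cN$, which differ by exactly $1$ for every $N$ and every $x$.

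Since $0<x-x_N\leq\be^{-l(N+1)+1}$, the distances from $x$ to the chord endpoints are at most $\be^2$ times the chord lengths. Differentiability at $x$ would therefore force both slopes to the same limit, which is impossible. The case $x=0$ follows from $G(\be^{-M})/\be^{-M}=M\to\infty$.

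The constant difference of $1$ comes from the weight $n$ in $n+c\,s_{n-1}(x)$ when a single digit is moved by one place. It does not come from the self-similarity $G=\be^{-1}G\circ\tau+\psi$.
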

For the proof of Theorem \ref{Main E}, we need the following two lemmas. 

\begin{lemma}\label{6-2} There is an open neighborhood $U\subset (0,1)$ of $1/\be$ such that for each $x\in[0,1]$ the function $p\in U\mapsto F_p(x)$ is real-analytic. 
The function 
    \[G(x)=\frac{1}{\be}\frac{\partial F_p(x)}{\partial p}\Biggr|_{p=\frac{1}{\be}}\]
    has the form 
    \begin{equation}\label{G(x)2}    
    G(x)=\frac{g_1(x)}{\be}+\sum_{n=2}^\infty\frac{g_n(x)}{\be^{n}}\Biggl(n-\frac{\sum_{i=1}^{n-1}g_i(x)}{m_{\be}([1/\be,1])}\Biggr)
     \end{equation}
     for $x\in[0,1]$, where $m_\be$ is a unique $\tau_\be$-invariant probability measure absolutely continuous with respect to the Lebesgue measure.
\end{lemma}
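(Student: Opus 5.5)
We work from the series representation
\[F_p(x)=\sum_{n=1}^\infty g_n(x)\,p^n\exp\Bigl(f(p)\sum_{i=1}^{n-1}g_i(x)\Bigr),\]
which holds because $p=1/\la_{f(p)}$ and $L_{n-1}(f(p),x)=\exp(f(p)\sum_{i=1}^{n-1}g_i(x))$. The plan is to extend each term holomorphically in $p$ and show the series converges locally uniformly in a complex neighborhood of $1/\be$, uniformly in $x$. The analytic extension of $f$ to a complex neighborhood of $(0,1)$ was already noted after the definition of $f$; it uses Proposition~\ref{5-1}(2) and the inverse function theorem.

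\textbf{Where $1/\be$ sits.} When $t=0$ the potential vanishes and $\la_0=\be$, since $\phi_0(1/\be)=\sum q_n(1)/\be^n=1$. Hence $f(1/\be)=0$. For complex $p$ near $1/\be$ the $n$-th term is bounded by
\[|p|^n\exp\Bigl(\mathrm{Re} f(p)\,(n-1)\Bigr)\le \bigl(|p|e^{|\mathrm{Re} f(p)|}\bigr)^n .\]
By continuity of $f$ at $1/\be$, with $f(1/\be)=0$ and $1/\be<1$, we can choose a disc $U'$ around $1/\be$ on which $|p|e^{|\mathrm{Re} f(p)|}\le\theta<1$. The series is then a uniformly convergent series of holomorphic functions on $U'$, uniformly in $x$, so by Weierstrass it is holomorphic there. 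Taking $U=U'\cap\R$ gives real-analyticity of $p\mapsto F_p(x)$, and we may differentiate term by term.

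\textbf{Computing the derivative.} Differentiating the $n$-th term at $p=1/\be$, and using $f(1/\be)=0$, gives
\[g_n(x)\Bigl(n\be^{-(n-1)}+\be^{-n}f'(1/\be)\sum_{i=1}^{n-1}g_i(x)\Bigr).\]
Multiplying by $1/\be$ yields
\[\frac{g_n(x)}{\be^{n}}\Bigl(n+\frac{f'(1/\be)}{\be}\sum_{i=1}^{n-1}g_i(x)\Bigr).\]
For $n=1$ the inner sum is empty, which gives the first term $g_1(x)/\be$. It remains to show
\[\frac{f'(1/\be)}{\be}=-\frac{1}{m_\be([1/\be,1])}.\]

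\textbf{Main obstacle: identifying $f'(1/\be)$.} Since $p=1/\la_{f(p)}$, we have
\[1=-\la_t^{-2}\,\frac{d\la_t}{dt}\Big|_{t=0}\,f'(1/\be).\]
By Proposition~\ref{5-1}(3), $\frac{d\la_t}{dt}=\la_t\al(t)$, so $f'(1/\be)=-\be/\al(0)$ and therefore $f'(1/\be)/\be=-1/\al(0)$. Here $\al(0)=\mu_0(C_1)$, where $\mu_0$ is the measure of maximal entropy $h_0\nu_0$.

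The last step is to identify $\mu_0$ with the Parry measure. At $t=0$ we have $\nu_0([\um,\ua])=\sum a_n/\be^n=\pi_\be(\ua)$, so $\nu_0$ is the pushforward of Lebesgue measure. Moreover
\[h_0=F(0)^{-1}\sum_n\be^{-n}\chi_{[\um,\sigma^n(\ud)]},\]
which is the Parry density. Hence $\mu_0$ corresponds to $m_\be$ under $\pi_\be$. Since $C_1$ corresponds to $[1/\be,1]$ up to a null set, $\al(0)=m_\be([1/\be,1])$, which gives \eqref{G(x)2}.

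I expect the care to be needed in justifying the uniform convergence in complex $p$, namely controlling $\mathrm{Re} f(p)$ near $1/\be$. Beyond that, one only needs to check that the identification of $\nu_0$ with $l\circ\pi_\be$ is consistent with the normalization $\nu_0(1)=1$.
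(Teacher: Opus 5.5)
Your proposal is correct and follows essentially the same route as the paper. Both arguments extend $F_p(x)$ holomorphically in $p$ via the analytic extension of $f$, get locally uniform convergence near $1/\be$ (where $f(1/\be)=0$), differentiate term by term, and obtain $f'(1/\be)=-\be/\al(0)$ with $\al(0)=\mu_0(C_1)=m_\be([1/\be,1])$ by identifying $\mu_0$ with the Parry measure. The differences are minor: you use Weierstrass rather than dominated convergence for term-by-term differentiation, and your final majorant $(|p|e^{|\mathrm{Re} f(p)|})^n$ is more careful than the paper's $|ze^{f(z)}|^n$, which fails when $\mathrm{Re} f(z)<0$ (your intermediate expression $|p|^n e^{\mathrm{Re} f(p)(n-1)}$ has the same defect, but the final bound is right).
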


\begin{proof}
By Proposition \ref{5-1} (1) and (2), we can take an open set $V\subset \C$ including $(0,1)$ on which the extension of $f(p)$ is analytic. As a slight abuse of the notation, we also denote by $f$ its extension. We set $F_{z,N}(x)=\sum_{n=1}^N g_n(x)z^n L_{n-1}(f(z),x)$ for $z\in V$, $N\geq1$ and $x\in[0,1]$. Note that 
\begin{align*}
    \Biggl|\sum_{n=1}^N g_n(x)z^n L_{n-1}(f(z), x)\Biggr|\leq \sum_{n=1}^N|ze^{f(z)}|^n.
\end{align*}
By setting $W=\{z\in\C; |ze^{f(z)}|<1\}$, we have 
\[\lim_{N\to\infty} F_{z,N}(x)=F_z(x):=\sum_{n=1}^\infty g_n(x)z^n L_{n-1}(f(z),x)\]
uniformly on any compact set in $V\cap W$. Since $F_{z,N}(x)$ is analytic on $V\cap W$, the function $F_z(x)$ is an analytic function for $z$. 
Note that $|e^{f(1/\be)}/\be|=1/\be<1$, which yields $1/\be\in V\cap W$. By considering the function $z\in U:=V\cap W\cap(0,1)\mapsto F_z(x)\in\C$, we have that $p\in U\to F_p(x)\in\R$ is real-analytic. 

Let $p\in U$. Then 
\[
\begin{split}
&\Biggl|\frac{\partial}{\partial p}\Bigl(g_n(x)p^n L_{n-1}(f(p),x)\Bigr)\Biggr| \\
&=\Biggl|n g_n(x)p^{n-1} L_{n-1}(f(p),x)+g_n(x)p^n f'(p)\Bigl(\sum_{i=1}^{n-1}g_i(x)\Bigr)L_{n-1}(f(p),x)\Biggr| \\
&\leq n p^{n-1} e^{|f(p)|n}+n|f'(p)||p^n|e^{|f(p)|n}
\end{split}\]
for $n\geq2$ and 
\[\frac{\partial}{\partial p}\Bigl(g_1(x)p \cdot L_{0}(f(p),x)\Bigr)=g_1(x),\]
which yields
\[\begin{split}
\sum_{n=1}^\infty\Biggl|\frac{\partial}{\partial p}(g_n(x)p^n L_{n-1}(f(p),x))\Biggr| 
\leq \sum_{n=1}^\infty
(n p^{n-1} e^{|f(p)|n}+n|f'(p)||p^n|e^{|f(p)|n})<\infty.
\end{split}\]
By the Lebesgue convergence theorem, we obtain
\[\begin{split}
&\frac{\partial}{\partial p}\sum_{n=1}^\infty g_n(x)p^n L_{n-1}(f(p), x) \\
&=\sum_{n=1}^\infty\frac{\partial}{\partial p}(g_n(x)p^n L_{n-1}(f(p), x)) \\
&=\sum_{n=1}^\infty ng_n(x)p^{n-1} L_{n-1}(f(p), x)+\sum_{n=2}^\infty g_n(x)p^n f'(p)\Bigl(\sum_{i=1}^{n-1}g_n(x)\Bigr)L_{n-1}(f(p),x).
\end{split}
\]
If $p=1/\be$ we have that $f(1/\be)=0$ since $\la_{f(1/\be)}=\be$ and $\la_0=\be$. 
This gives $L_n(f(1/\be),x)=1$ for any $n\geq0$ and $x\in[0,1]$. Hence 
\begin{align}\label{G(x)}
    G(x)&=\frac{1}{\be}\frac{\partial F_p(x)}{\partial p}\biggl|_{p=\frac{1}{\be}} \notag \\ 
    &=\sum_{n=1}^\infty\frac{ng_n(x)}{\be^{n}}
    +\sum_{n=2}^\infty\frac{g_n(x)}{\be^{n+1}}f'\Bigl(\frac{1}{\be}\Bigr)\sum_{i=1}^{n-1}g_i(x) \notag \\
    &=\frac{g_1(x)}{\be}+\sum_{n=2}^\infty\frac{g_n(x)}{\be^{n}}\Biggl(n+\frac{f'(1/\be)}{\be}\sum_{i=1}^{n-1}g_i(x)\Biggr)
\end{align}
for $x\in[0,1]$. Since $1/\la_{f(p)}=p$ for $p\in(0,1)$, we have $\la'_{f(p)}f'(p)=-\la_{f(p)}/p$, which yields 
\[f'(p)=-\frac{\la_{f(p)}}{p\la'_{f(p)}}=-\frac{1}{p\al(f(p))}\]
for $p\in(0,1)$, where $\al$ is the function as defined in Proposition \ref{5-1} (3). Hence 
\[f'\Bigl(\frac{1}{\be}\Bigr)=-\frac{\be}{\al(0)}=-\frac{\be}{\mu_0(C_1)}=-\frac{\be}{m_\be([1/\be,1])}.\]
We recall that $\mu_t$ denotes the unique equilibrium state for the potential $t\chi_{C_1}$, where $t\in\R$. This equals to the measure of maximal entropy in the case of $t=0$, which yields $\mu_0(C_1)=m_\be([1/\be,1])$. Together with the equation (\ref{G(x)}), we have
\begin{equation*}
G(x)=\frac{g_1(x)}{\be}+\sum_{n=2}^\infty\frac{g_n(x)}{\be^{n}}\Biggl(n-\frac{\sum_{i=1}^{n-1}g_i(x)}{m_{\be}([1/\be,1])}\Biggr)
\end{equation*}
for $x\in[0,1]$, which ends the proof.
\end{proof}

\begin{lemma}\label{6-3}
Let $G(x)$ be the function defined as in Lemma \ref{6-2}. Then we have the following:

    (1) $G(0)=G(1)=1$.

    (2) For $x\in(0,1]$ set 
    \[H(x)=\frac{q_1(x)}{\be}+\sum_{n=2}^\infty\frac{q_n(x)}{\be^{n}}\Biggl(n-\frac{\sum_{i=1}^{n-1}q_i(x)}{m_{\be}([1/\be,1])}\Biggr),\]
    where $(q_n(x))_{n=1}^\infty$ is the coefficient sequence of the quasi-greedy expansion of $x\in(0,1]$. Then $H(x)=G(x)$ for $x\in(0,1]$.
    
\end{lemma}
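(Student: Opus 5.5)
The plan is to compute both endpoint values in (1) from the analytic description of $F_p$ in Lemma \ref{6-2}, and then to prove (2) by the same telescoping used in Proposition \ref{3-4} (1). Before writing anything up I will do a consistency check on (1), because my first pass gives $0$ rather than $1$.

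\textbf{Item (1).} I plan to use two routes and compare them.
\begin{itemize}
\item Formula route. At $x=0$ every digit $g_n(0)$ vanishes, so every term of (\ref{G(x)2}) is zero. At $x=1$ I would substitute the greedy digits $g_n(1)$ into (\ref{G(x)2}).
\item Structural route. I would use that $F_p(x)=\nu_{f(p)}(\pi_\be^{-1}([0,x]))$ is a distribution function of a probability measure for every $p\in U$.
\end{itemize}

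At $x=0$, Proposition \ref{3-4} (3) says $\nu_t$ is non-atomic, so $F_p(0)=0$ for all $p\in U$. Since $\pi_\be^{-1}([0,1])=X_\be$ and $\nu_t(1)=1$, we also have $F_p(1)=1$ for all $p\in U$. Each map $p\mapsto F_p(x)$ is real-analytic on $U$ by Lemma \ref{6-2}, so each of these constant functions can be differentiated at $p=1/\be$.

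The step I expect to be the real obstacle is reconciling this with the claimed value. Differentiating a constant function gives derivative $0$. The formula route agrees at $x=0$, where every term vanishes, and should agree at $x=1$ once the identity below is applied.

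To see $G(1)$ through the series, I would set $\ud=\ua$ in Theorem \ref{Main A}, which gives $\phi_t(\la_t^{-1})=1$. By Proposition \ref{3-4} (1), the same sum equals the corresponding sum over the greedy digits $g_n(1)$. Rewritten in the variable $p$, this reads
\[
\sum_{n}g_n(1)\,p^nL_{n-1}(f(p),1)=1 .
\]
I would differentiate this identity at $p=1/\be$, using $f(1/\be)=0$ and $f'(1/\be)=-\be/m_\be([1/\be,1])$ from the proof of Lemma \ref{6-2}. The result is exactly (\ref{G(x)2}) at $x=1$ set equal to $0$.

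So with $G$ normalized as in Lemma \ref{6-2}, both routes give $G(0)=G(1)=0$, not $1$. Unless the intended normalization of $G$ differs from the one in Lemma \ref{6-2}, item (1) as printed does not hold, and I would not claim the value $1$ in the write-up.

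\textbf{Item (2).} I would first dispose of the non-simple case, where $q_n(x)=g_n(x)$ for all $n$ and there is nothing to prove. For simple $x$ with $S(x)=n_0$, I would use the digit relations
\[
q_i(x)=g_i(x)\ (i<n_0),\qquad q_{n_0}(x)=0=g_{n_0}(x)-1,\qquad q_{n_0+i}(x)=q_i(1).
\]
I would then split $H(x)-G(x)$ into three pieces:
\begin{itemize}
\item the term of index $n_0$;
\item the tail $n>n_0$, which contains the digits $q_i(1)$ shifted by $n_0$, with the partial sums $\sum_{i\le n-1}q_i(x)$ shifted by the constant $S_{n_0-1}:=\sum_{i<n_0}g_i(x)$;
\item the terms with $n<n_0$, where the digits agree and these terms cancel.
\end{itemize}

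The tail is $\be^{-n_0}$ times two pieces: a copy of $H$ at $1$ with $n$ replaced by $n+n_0$, and $\sum_j q_j(1)\be^{-j}=1$ times $-S_{n_0-1}/m_\be([1/\be,1])$. Here I would use $\sum_j q_j(1)\be^{-j}=1$ together with the differentiated identity $\sum_j q_j(1)\be^{-j}\bigl(j-\sum_{i<j}q_i(1)/m_\be([1/\be,1])\bigr)=0$, which is the computation above with quasi-greedy digits. Combining these, the tail should equal exactly the missing $n_0$-term
\[
\be^{-n_0}\Bigl(n_0-\frac{S_{n_0-1}}{m_\be([1/\be,1])}\Bigr),
\]
which gives $H(x)=G(x)$. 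The main bookkeeping hazard is the $n$ versus $n+n_0$ shift in the weight $n$; the identity $\sum_j q_j(1)\be^{-j}=1$ is what absorbs it.
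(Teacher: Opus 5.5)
Your proposal is correct, and the paper itself confirms your reading of item (1): its proof concludes $G(0)=0$, since every $g_n(0)=0$ in (\ref{G(x)2}), and $G(1)=0$, by differentiating $F_p(1)\equiv 1$. So the printed value $1$ is a typo for $0$, and your structural route for $G(1)$ is exactly the paper's argument.

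For item (2) you use the same telescoping as the paper, but you obtain the key fact $H(1)=0$ differently.

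The paper first derives, for simple $x$ with $S(x)=n_0$, the relation $H(x)=G(x)+\be^{-n_0}H(1)$, using only $\sum_j q_j(1)\be^{-j}=1$. It then gets $H(1)=0$ by a case split:
\begin{itemize}
\item if $1$ is non-simple, then $H(1)=G(1)=0$;
\item if $1$ is simple with $S(1)=m_0$, applying the same relation at $x=1$ gives $H(1)=G(1)+\be^{-m_0}H(1)=\be^{-m_0}H(1)$, hence $H(1)=0$.
\end{itemize}

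You instead prove $H(1)=0$ directly, by differentiating $\phi_{f(p)}(p)\equiv 1$ at $p=1/\be$ with the quasi-greedy digits of $1$. This identity is valid because $p=1/\la_{f(p)}$ and by Proposition \ref{phi_t} (3).

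Your route is uniform in $\be$, with no case split on whether $1$ is simple. It also makes visible that $H(1)=0$ is just the derivative of the eigenvalue equation. Its only extra cost is justifying termwise differentiation of this second series. That goes through verbatim as in Lemma \ref{6-2}, since that argument uses only that the digits lie in $\{0,1\}$. The paper's route avoids the second differentiation by recycling $G(1)=0$ together with the self-similarity of $\ud$.
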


\begin{proof}
    (1) By the formula (\ref{G(x)2}) it is obvious that $G(0)=0$. Since 
    \[G(1)=\frac{1}{\be}\frac{\partial F_p(1)}{\partial p}\Biggr|_{p=\frac{1}{\be}}\]
    and $F_p(1)=D_{f(p)}(1)=\nu_{f(p)}(\pi_\be^{-1}[0,1])=1$ for any $p\in(0,1)$, we obtain 
    \[G(1)=\frac{1}{\be}\frac{\partial 1}{\partial p}\Biggr|_{p=\frac{1}{\be}}=0.\]
    
    (2) Let $x\in(0,1]$ be non-simple, i.e., there is no positive integer $n_0$ such that $\tau_\be^{n_0-1}(1)=1/\be$. In this case, we have $(q_n(x))_{n=1}^\infty=(g_n(x))_{n=1}^\infty$, which yields $H(x)=G(x)$. 

    Let $x\in(0,1]$ be simple, i.e., there is a positive integer $n_0\geq1$ such that $\tau_\be^{n_0-1}(1)=1/\be$. In this case, we know that $g_n(x)=0$ for $n\geq n_0+1$. By the definition of the quasi-greedy expansion of $x$ we have $q_i(x)=g_i(x)$ for $1\leq i<n_0$, $q_{n_0}(x)=0$ and $q_{i+n_0}(x)=q_{i}(1)$ for $i\geq1$. Set 
    \[S_ng(x)=\begin{cases}
        0, & (n=0)\\
        \sum_{i=1}^n g_i(x), & (n\geq1)
    \end{cases}\]
    and 
    \[S_nq(x)=\begin{cases}
        0, & (n=0)\\
        \sum_{i=1}^n q_i(x). & (n\geq1)
    \end{cases}\]    
    Then 
    \begin{align*}
        H(x)&=\sum_{n=1}^\infty\frac{q_n(x)}{\be^n}\Biggl(n-\frac{S_{n-1}q(x)}{m_{\be}([1/\be,1])}\Biggr) \\
        &=\sum_{n=1}^{n_0}\frac{q_n(x)}{\be^n}\Biggl(n-\frac{S_{n-1}q(x)}{m_{\be}([1/\be,1])}\Biggr)+\sum_{n=n_0+1}^\infty\frac{q_n(x)}{\be^n}\Biggl(n-\frac{S_{n-1}q(x)}{m_{\be}([1/\be,1])}\Biggr) \\
        &=\sum_{n=1}^{n_0}\frac{g_n(x)}{\be^n}\Biggl(n-\frac{S_{n-1}g(x)}{m_{\be}([1/\be,1])}\Biggr)-\frac{1}{\be^{n_0}}\Biggl(n_0-\frac{S_{n_0-1}g(x)}{m_{\be}([1/\be,1])}\Biggr)\\
        &+\sum_{n=1}^\infty\frac{q_n(1)}{\be^{n_0+n}}\Biggl(n_0+n-\frac{S_{n_0+n-1}q(x)}{m_{\be}([1/\be,1])}\Biggr) \\
        &=G(x)-\frac{n_0}{\be^{n_0}}+\frac{1}{\be^{n_0}}\frac{S_{n_0-1}g(x)}{m_\be([1/\be,1])} \\
        &+\frac{1}{\be^{n_0}}\sum_{n=1}^\infty\frac{q_n(1)}{\be^n}\Biggl(n+n_0-\frac{S_{n_0-1}g(x)+S_{n-1}q(1)}{m_\be([1/\be,1])}\Biggr) \\
        &=G(x)-\frac{n_0}{\be^{n_0}}\Biggl(1-\sum_{n=1}^\infty\frac{q_n(1)}{\be^n}\Biggr)+\frac{1}{\be^{n_0}}\Biggl(1-\sum_{n=1}^\infty\frac{q_n(1)}{\be^n}\Biggr)\frac{S_{n_0-1}g(x)}{m_\be([1/\be,1])} \\
        &+\frac{1}{\be^{n_0}}\sum_{n=1}^\infty\frac{q_n(1)}{\be^n}\Biggl(n-\frac{S_{n-1}q(1)}{m_\be([1/\be,1])}\Biggr) \\
        &=G(x)+\frac{1}{\be^{n_0}}H(1).
        \end{align*}
        If $1$ is non-simple, we have $H(x)=G(x)$ since $H(1)=G(1)=0$. If $1$ is simple, we obtain $H(1)=G(1)+(1/\be^{m_0})H(1)=(1/\be^{m_0})H(1)$ by the above equation, where $m_0$ is the minimal positive integer satisfying $\tau^{m_0-1}(1)=1/\be$. This shows that $H(1)=0$, which yields $H(x)=G(x)+(1/\be^{m_0})H(1)=G(x)$, as desired. 
    \end{proof}

\begin{proof}[Proof of Theorem \ref{Main E}]
Let $N\geq2$ be an integer. For $x\in[0,1)$ set 
\[G_N(x)=\frac{g_1(x)}{\be}+\sum_{n=2}^N\frac{g_n(x)}{\be^n}\Biggl(n-\frac{\sum_{i=1}^{n-1}g_i(x)}{m_\be([1/\be,1])}\Biggr).\]
Since $g_n(x)$ is right continuous for any $1\leq n\leq N$ so is $G_N(x)$. Hence 
\[\Bigl|G_N(x)-G(x)\Bigr|\leq \sum_{n=N+1}^\infty \frac{n}{\be^n}\Bigl(1+\frac{1}{m_\be([1/\be,1])}\Bigr)\to 0\]
as $N\to\infty$, which yields that $G_N(x)$ converges to $G(x)$ uniformly on $x\in[0,1)$ as $N\to0$. This shows that $G(x)$ is right continuous for $x\in[0,1)$. 

Let $N\geq2$ be an integer. For $x\in(0,1]$ set 
\[H_N(x)=\frac{q_1(x)}{\be}+\sum_{n=2}^N\frac{q_n(x)}{\be^n}\Biggl(n-\frac{\sum_{i=1}^{n-1}q_i(x)}{m_\be([1/\be,1])}\Biggr).\]
Since $q_n(x)$ is left continuous for any $1\leq n\leq N$ so is $H_N(x)$. As in the same way of the case of the functions $G_N(x)$ and $G(x)$, we can show that $H_N(x)$ converges to $H(x)$ uniformly on $(0,1]$. Since $G(x)=H(x)$ for $x\in(0,1]$ by Lemma \ref{6-3} (2), we have that $G(x)$ is left continuous on $(0,1]$. Together with the right-continuity of $G(x)$ on $[0,1)$, we have that $G(x)$ is continuous on $[0,1]$. 

We show that $G(x)$ is nowhere differentiable on $[0,1]$. Let $x\in(0,1]$. By Lemma \ref{6-3} (2) we know that 
\[G(x)=H(x)=\frac{q_1(x)}{\be}+\sum_{n=2}^\infty\frac{q_n(x)}{\be^{n}}\Biggl(n-\frac{\sum_{i=1}^{n-1}q_i(x)}{m_{\be}([1/\be,1])}\Biggr).\]
Note that there are infinitely many $n$ such that $q_n(x)=1$. Set $l(1)=1$ and $l(N)=\min\{n>l(N-1); q_n(x)=1\}$ for $N\geq2$. Let 
\[x_N=\sum_{n=1}^{l(N)}\frac{q_n(x)}{\be^n}\]
and 
\[x_N^{-}=\sum_{n=1}^{l(N-1)}\frac{q_n(x)}{\be^n}+\frac{1}{\be^{l(N)+1}}\]
for $N\geq2$. 

Now we assume that $G(x)$ is differentiable at $x\in(0,1]$ and set $\alpha=\lim_{y\to x}(G(x)-G(y))/(x-y)$. Then 
\begin{align*}
    &\Biggl|\frac{G(x_{N+1})-G(x_{N})}{x_{N+1}-x_N}-\alpha\Biggr| \\
    &=\Biggl|\frac{G(x_{N+1})-G(x_{})}{x_{N+1}-x}\cdot\frac{x_{N+1}-x}{x_{N+1}-x_N}+\frac{G(x_{})-G(x_{N})}{x_{}-x_N}\cdot\frac{x_{}-x_N}{x_{N+1}-x_N}-\alpha\Biggr|     \\
    &\leq \Biggl|\Bigl(\frac{G(x_{N+1})-G(x_{})}{x_{N+1}-x}-\alpha\Bigr)\Biggr|\Biggl|\frac{x_{N+1}-x}{x_{N+1}-x_N}\Biggr|+\Biggl|\Bigl(\frac{G(x_{})-G(x_{N})}{x_{}-x_N}-\alpha\Bigr)\Biggr|\Biggl|\frac{x_{}-x_N}{x_{N+1}-x_N}\Biggr|     \\
    &\leq \Biggl|\Bigl(\frac{G(x_{N+1})-G(x_{})}{x_{N+1}-x}-\alpha\Bigr)\Biggr|\Biggl|\frac{1}{\beta-1}\Biggr|+\Biggl|\Bigl(\frac{G(x_{})-G(x_{N})}{x_{}-x_N}-\alpha\Bigr)\Biggr|\Biggl|\frac{\be}{\be-1}\Biggr| \to 0     
    \end{align*}
as $N\to\infty$. In the same way, we obtain 
\begin{align*}
    &\Biggl|\frac{G(x_{N+1}^-)-G(x_{N})}{x_{N+1}^--x_N}-\alpha\Biggr| \\
    &\leq \Biggl|\Bigl(\frac{G(x_{N+1}^-)-G(x_{})}{x_{N+1}^--x}-\alpha\Bigr)\Biggr|\Biggl|\frac{x_{N+1}^--x}{x_{N+1}^--x_N}\Biggr|+\Biggl|\Bigl(\frac{G(x_{})-G(x_{N})}{x_{}-x_N}-\alpha\Bigr)\Biggr|\Biggl|\frac{x_{}-x_N}{x_{N+1}^--x_N}\Biggr|     \\
    &\leq \Biggl|\Bigl(\frac{G(x_{N+1})-G(x_{})}{x_{N+1}-x}-\alpha\Bigr)\Biggr|\Biggl|\frac{\be^2}{\beta-1}-\be\Biggr|+\Biggl|\Bigl(\frac{G(x_{})-G(x_{N})}{x_{}-x_N}-\alpha\Bigr)\Biggr|\Biggl|\frac{\be^2}{\be-1}\Biggr| \to 0     
    \end{align*}
as $N\to\infty$. By the formula (\ref{G(x)2}) of $G(x)$, however, we have 
\begin{align*}
    \frac{G(x_{N+1})-G(x_{N})}{x_{N+1}-x_N}&=\be^{l(N+1)}\Biggl(\frac{l(N+1)-\sum_{i=1}^{l(N+1)-1}g_i(x_{N+1})/m_\be([1/\be,1])}{\be^{l(N+1)}}\Biggr) \\
    &=l(N+1)-\frac{N}{m_\be([1/\be,1])}
\end{align*}
and 
\begin{align*}
    \frac{G(x_{N+1}^-)-G(x_{N})}{x_{N+1}^--x_N}&=\be^{l(N+1)+1}\Biggl(\frac{l(N+1)+1-\sum_{i=1}^{l(N+1)}g_i(x_{N+1}^{-})/m_\be([1/\be,1])}{\be^{l(N+1)+1}}\Biggr) \\
    &=l(N+1)+1-\frac{N}{m_\be([1/\be,1])}.
\end{align*}
This shows that 
\[\frac{G(x_{N+1}^-)-G(x_{N})}{x_{N+1}^--x_N}=\frac{G(x_{N+1})-G(x_{N})}{x_{N+1}-x_N}+1.\]
By taking the limit of both sides of the above equation as $N\to\infty$, we have $\alpha=\alpha+1$, which gives the contradiction. 

For $x=0$, we know that $G(0)=0$. Let $M\geq1$ and set $x_M=1/\be^M$. Then $G(x_M)=M/\be^M$ by the formula (\ref{G(x)2}) and 
\[\frac{G(x_M)-G(0)}{x_M-0}=\be^M\cdot\frac{M}{\be^M}=M\to\infty\]
    as $M\to\infty$, which shows that $G(x)$ is non-differentiable at $x=0$. This finishes the proof. 
\end{proof}
\end{section}

\begin{section}{Examples}
In this section, we show that $\dim_H\La_\al$ has a simple formula as a function of $\al$ if $\be>1$ is a positive solution of the equation $x^{N+1}-x^{N}-1=0$ for some positive integer $N$. 

Let $N\geq1$ be an integer and let $\be>1$ satisfy $\be^{N+1}-\be^N-1=0$. It is easy to see that the greedy expansion of $1$ is given by $1\overline{0}^{(N-1)}1\overline{0}^\infty$, which yields that the quasi-greedy expansion of $1$ is given by $\ud=\overline{1\overline{0}^N}^\infty$. 

\begin{theorem}\label{7-1}
    Let $\be>1$ satisfy $\be^{N+1}-\be^N-1=0$ for some positive integer $N$. Then for $\al\in(0,1/(N+1))$ we obtain 
    \[\dim_{H}(\Lambda_\al)=\frac{1}{\log\be}\Biggl((1-N\al)\log\frac{1-N\al}{1-(N+1)\al}-\al\log\frac{\al}{1-(N+1)\al}\Biggr).\]
\end{theorem}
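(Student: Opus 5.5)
The plan is to make everything in Theorem \ref{Main D} explicit, using that $\ud$ is purely periodic, and then to eliminate the parameter $t$. As noted just before the statement, $\ud=\overline{1\overline{0}^N}^\infty$. So $q_n(1)=1$ exactly when $n=k(N+1)+1$ with $k\geq0$, and for such $n$ we have $H_{n-1}(t,\ud)=e^{kt}$, since exactly $k$ ones occur among $q_1(1),\dots,q_{n-1}(1)$. The power series (\ref{power series}) is then geometric:
\[\phi_t(z)=\sum_{k=0}^\infty e^{kt}z^{k(N+1)+1}=\frac{z}{1-e^tz^{N+1}}.\]
By Proposition \ref{phi_t} (3), $\la_t^{-1}$ is the unique positive zero of $1-\phi_t(z)$. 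Hence $\la_t$ is characterised by
\[\la_t^{N+1}-\la_t^{N}=e^t,\qquad\text{equivalently}\qquad t=N\log\la_t+\log(\la_t-1),\]
with $\la_t>1$. This is consistent with $\la_0=\be$, because $\be^{N+1}-\be^N-1=0$.

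Next I will compute $\al(t)$. Rather than summing the series in (\ref{alphat}), I will use Proposition \ref{5-1} (3), which gives $\al(t)=\frac{d}{dt}\log\la_t$. Writing $\la=\la_t$ and differentiating the relation $t=N\log\la+\log(\la-1)$ gives
\[\frac{dt}{d\la}=\frac{N}{\la}+\frac{1}{\la-1}=\frac{(N+1)\la-N}{\la(\la-1)},\qquad\text{hence}\qquad \al(t)=\frac{1}{\la}\Bigl(\frac{dt}{d\la}\Bigr)^{-1}=\frac{\la-1}{(N+1)\la-N}.\]
Solving for $\la$ in terms of $\al=\al(t)$ gives
\[\la_t=\frac{1-N\al}{1-(N+1)\al},\qquad \la_t-1=\frac{\al}{1-(N+1)\al}.\]
I will then substitute these, together with $t=N\log\la_t+\log(\la_t-1)$, into the formula $\dim_H\La_{\al(t)}=(\log\la_t-t\al(t))/\log\be$ from Theorem \ref{Main D}. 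This yields
\[\log\la_t-t\al=(1-N\al)\log\la_t-\al\log(\la_t-1),\]
which is exactly the stated expression.

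The step needing care is showing that every $\al\in(0,1/(N+1))$ arises as $\al(t)$ for some $t\in\R$. By Proposition \ref{5-1} (4), the map $\al(t)$ is strictly increasing with limits $0$ and $c_\be$ as $t\to-\infty$ and $t\to\infty$. By Theorem \ref{5-2} (4), $c_\be=1/(N+1)$. So $t\mapsto\al(t)$ is a bijection from $\R$ onto $(0,1/(N+1))$, and the formula holds on the whole open interval. As a sanity check, the explicit expression $\al=(\la-1)/((N+1)\la-N)$ confirms these limits directly: it tends to $0$ as $\la\to1$ and to $1/(N+1)$ as $\la\to\infty$. No further obstacle is expected; the argument is essentially a computation once $\phi_t$ is recognised as a geometric series.
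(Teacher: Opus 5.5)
Your proposal is correct and follows the paper's proof essentially step for step: you evaluate $\phi_t$ as a geometric series to get $e^t=\la_t^{N+1}-\la_t^N$, differentiate to obtain $\al=(\la_t-1)/((N+1)\la_t-N)$, invert for $\la_t$ and $t$, and substitute into Theorem \ref{Main D}. Your direct check that $\la\mapsto(\la-1)/((N+1)\la-N)$ maps $(1,\infty)$ onto $(0,1/(N+1))$ makes the surjectivity onto the whole interval slightly more self-contained than the paper's bare citation of Proposition \ref{5-1}.
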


\begin{proof}
   Let $t\in\R$ and let $\la_t\in(1,\infty)$ be the positive number as in Theorem \ref{Main D}. Since
\begin{align*}
    1=\phi_t(\la_t^{-1})
    &=\frac{1}{\la_t}\Biggl(1+\frac{e^t}{\la_t^{N+1}}+\frac{e^{2t}}{\la_t^{2(N+1)}}+\dots \Biggr) \\
    &=\frac{1}{\la_t}\frac{1}{1-e^t/\la_t^{N+1}} \\
    &=\frac{\la_t^N}{\la_t^{N+1}-e^t},
\end{align*}
   we obtain $\la_t^{N+1}-\la_t^N-e^t=0$. This shows that 
\begin{equation}\label{ex}
e^t=\la_t^{N+1}-\la^N_{t}.
\end{equation}
Since $\la_t$ is real-analytic for $t\in(-\infty,\infty)$ by Proposition \ref{5-1}(2), differentiating both sides of the above equation by $t$ gives
\[e^t=(N+1)\la_t^N \la_t'-N\la_{t}^{N-1}\la_t',\]
which yields 
\begin{align*}
    \frac{\la_t'}{\la_t}
    &=\frac{e^t}{(N+1)\la_t^{N+1}-N\la_t^N}=\frac{\la_t^{N+1}-\la_t^N}{(N+1)\la_t^{N+1}-N\la_t^N}    \\
    &=\frac{\la_t-1}{(N+1)\la_t-N}.
    \end{align*}
Let $\al\in(0,1/(N+1))$ and let $t(\al)\in\R$ be the real number such that $\al=\la'_{t(\al)}/\la_{t(\al)}$. Note that we can take such $t(\al)$ by Proposition \ref{5-1}(4) and (5).

Since
\[\al=\frac{\la'_{t(\al)}}{\la_{t(\al)}}=\frac{\la_{t(\al)}-1}{(N+1)\la_{t(\al)}-N},\]
we obtain
\[\la_{t(\al)}=\frac{1-N\al}{1-(N+1)\al}.\]
By the equation (\ref{ex}), we have
\[t(\al)=\log(\la_{t(\al)}^{N+1}-\la_{t(\al)}^{N})=N\log{\la_{t(\al)}}+\log(\la_{t(\al)}-1).\]
Applying Theorem \ref{Main D}, we obtain 
\begin{align*}
    \dim_{H}\Lambda_\al
    &=\frac{1}{\log\be}\Bigl(\log\la_{t(\al)}-t(\al)\al\Bigr) \\
    &=\frac{1}{\log\be}\Bigl(\log\la_{t(\al)}-\al N\log\la_{t(\al)}-\al\log(\la_{t(\al)}-1)\Bigr) \\
    &=\frac{1}{\log\be}\Biggl((1-\al N)\log \frac{1-N\al}{1-(N+1)\al}-\al\log\frac{\al}{1-(N+1)\al} \Biggr),
    \end{align*}
as desired.
\end{proof}

\end{section}

Acknowledgements. The author would like to thank Hiroki Sumi for his valuable comments about the Hata-{Yamaguti} formula. The author would also like to thank the anonymous referee for valuable comments and suggestions. This work was supported by JSPS KAKENHI Grant Number 20K14331 and 24K16932.

\bibliographystyle{amsplain}

\end{document}